\numberwithin{equation}{section}
\newtheorem{theorem}{Theorem}[section]
\newtheorem{proposition}[theorem]{Proposition}
\newtheorem{lemma}[theorem]{Lemma}
\newtheorem{corollary}[theorem]{Corollary}
\theoremstyle{definition}
\newtheorem{definition}[theorem]{Definition}
\newtheorem{remark}[theorem]{Remark}
\newcommand{\be}{\mathbf{e}}
\newcommand{\bn}{\mathbf{n}}
\newcommand{\cA}{\mathcal{A}}
\newcommand{\cB}{\mathcal{B}}
\newcommand{\cC}{\mathcal{C}}
\newcommand{\cI}{\mathcal{I}}
\newcommand{\cP}{\mathcal{P}}
\newcommand{\cH}{\mathcal{H}}
\newcommand{\cR}{\mathcal{R}}
\newcommand{\cS}{\mathcal{S}}
\newcommand{\cT}{\mathcal{T}}
\newcommand{\N}{\mathbb{N}}
\newcommand{\R}{\mathbb{R}}
\renewcommand{\S}{\mathbb{S}}
\newcommand{\C}{\mathbb{C}}
\newcommand{\sA}{\mathscr{A}}
\newcommand{\sB}{\mathscr{B}}
\newcommand{\sC}{\mathscr{C}}
\newcommand{\sD}{\mathscr{D}}
\newcommand{\sF}{\mathscr{F}}
\newcommand{\sS}{\mathscr{S}}
\newcommand{\sU}{\mathscr{U}}
\newcommand{\Trace}{\mathrm{Tr}}
\newcommand{\Id}{\mathrm{I}}
\newcommand{\dist}{\mathrm{dist}}
\newcommand{\intpart}[1]{\lfloor #1\rfloor}
\renewcommand{\tilde}{\widetilde}
\renewcommand{\d}{\mathrm{d}}
\newcommand{\sd}{\mathrm{sd}}
\newcommand{\sign}{\mathrm{sign}\,}
\newcommand{\loc}{ {\mathrm{loc}} }
\newcommand{\p}{\partial}
\newcommand{\pOmega}{\partial^{\Omega}}
\newcommand{\Int}[1]{\mathrm{Int}(#1)}
\newcommand{\cl}[1]{\overline{#1}}
\newcommand{\cn}{\mathtt{n}}
\setlist[itemize]{leftmargin=6mm} 
\begin{document}


\title[Evolution of droplets]{Minimizing movements for forced anisotropic curvature flow of droplets}

\author[Sh. Kholmatov] {Shokhrukh Yu. Kholmatov} 
\address[Sh. Kholmatov]{University of Vienna,  Oskar-Morgenstern Platz 1, 1090 Vienna 
(Austria)}
\email{shokhrukh.kholmatov@univie.ac.at}

\keywords{anisotropy, capillarity functional, droplet, anisotropic curvature flow, minimizing movements, consistency}

\subjclass[2010]{53C44, 49Q20, 35A15,  35D30, 35D35}

\date{\today}

\begin{abstract}
We study forced anisotropic curvature flow of droplets on an inhomogeneous horizontal hyperplane. As in \cite{BKh:2018} we  establish the existence of smooth flow, starting from a regular droplet and satisfying the prescribed anisotropic Young's law, and also the existence of a $1/2$-H\"older continuous in time minimizing movement solution starting from a set of finite perimeter. Furthermore, we investigate various properties of minimizing movements, including comparison principles, uniform boundedness and the consistency with the smooth flow.
\end{abstract}

\maketitle

\section{Introduction}

In this paper we are interested in the  forced anisotropic mean curvature flow of droplets on an inhomogeneous hyperplane. Representing the droplets by subsets of the halfplane 
$$
\Omega:=\R^{n-1}\times (0,+\infty)
$$ 
and the relative adhesion coefficient of the hyperplane $\p\Omega:=\R^{n-1}\times\{0\}$ by a function $\beta:\p\Omega\to\R,$ we write the corresponding evolution equation of droplets $\{E(t)\}_{t\in[0,T)}$ as
\begin{equation}\label{aniso_mce}
\begin{cases}
v_{\Gamma(t)}(x) = -\kappa_{\Gamma(t)}^\Phi (x) - f(t,x) & \text{for $t\in(0,T)$ and $x\in \Gamma(t),$}\\[2mm]
\p\Gamma(t) \subset \p\Omega & \text{for $t\in [0,T) ,$}\\[2mm]
\nabla \Phi(\nu_{\Gamma(t)}(x)) \cdot \be_n = -\beta(x) & \text{for $t\in (0,T)$ and $x\in \p\Gamma(t),$}\\[2mm]
E(0)=E_0,
\end{cases}
\end{equation}
where $\Gamma(t):=\Omega \cap\p E(t)$ is the free boundary of $E(t)$ in $\Omega,$ $\Phi$ is an \emph{even anisotropy} in $\R^n,$ i.e., a positively one-homogeneous even convex function in $\R^n$ satisfying 
\begin{equation}\label{norm_bounds}
c_\Phi|x| \le \Phi(x) \le C_\Phi|x|,\quad x\in\R^n, 
\end{equation}
for some $0<c_\Phi\le C_\Phi,$ 
$v_{\Gamma(t)}$ and $\kappa_{\Gamma(t)}^\Phi$ are the normal velocity and the anisotropic mean curvature of $\Gamma(t),$ respectively, $\nu_{\Gamma(t)}$ is the unit normal, outer to $E(t),$ $f:\R_0^+\times \Omega\to\R$  is a forcing term, $\be_n=(0,\ldots,0,1)$ and $E_0$ is the initial droplet, here $\R_0^+:=[0,+\infty)$. In the literature the third equation in \eqref{aniso_mce} is called the anisotropic Young's law or anisotropic contact-angle condition \cite{DPhM:2015}. 
We refer to solutions of \eqref{aniso_mce} as the $\Phi$-curvature flow starting from $E_0,$ with forcing $f$ and anisotropic contact angle $\beta.$ 

The following result shows that the equation \eqref{aniso_mce} is well-posed.

\begin{theorem}\label{teo:short_intro}
Let $\Phi$ be an elliptic $C^{3+\alpha}$-anisotropy in $\R^n,$ $ f\in C^{\frac{\alpha}{2},\alpha}(\R_0^+\times \cl{\Omega}),$ $\beta\in C^{1+\alpha}(\p\Omega)$ with $\|\beta\|_\infty<\Phi(\be_n)$ and  $E_0\subset\Omega$ be a bounded set such that $\Gamma_0:=\Omega \cap\p E_0$ is a $C^{2+\alpha}$-hypersurface with boundary satisfying 
$$
\p\Gamma_0\subset \p\Omega\quad\text{and}\quad \nabla\Phi(\nu_{\Gamma_0})\cdot\be_n = \beta\quad\text{on $\p\Gamma_0$},
$$
where $\alpha\in (0,1].$ Then there exist a maximal time $T^\dag>0$ and a unique $\Phi$-curvature flow $\{E(t)\}_{t\in[0,T^\dag)}$ starting from $E_0,$ with forcing $f$ and anisotropic contact angle $\beta.$
\end{theorem}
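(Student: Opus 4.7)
The plan is to transform \eqref{aniso_mce} into a quasilinear parabolic initial-boundary value problem on the fixed manifold-with-boundary $\Gamma_0$ and to solve it via Schauder theory for linear parabolic equations with oblique boundary conditions, following the same strategy as in \cite{BKh:2018}. The new ingredients to be handled are the anisotropy $\Phi$ entering both the principal operator and the boundary condition, and the non-autonomous forcing $f$.

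First I would fix a smooth vector field $X$ in a tubular neighborhood of $\Gamma_0$, transversal to $\Gamma_0$ and tangential to $\p\Omega$ on $\p\Gamma_0$, and represent nearby admissible surfaces as $\Gamma_u:=\{x+u(x)X(x):x\in\Gamma_0\}$. This parametrization automatically enforces the constraint $\p\Gamma_u\subset\p\Omega$. Setting $\Gamma(t)=\Gamma_{u(t,\cdot)}$, the bulk law $v_{\Gamma(t)}=-\kappa^\Phi_{\Gamma(t)}-f$ becomes a quasilinear evolution equation
\[
\p_t u = a^{ij}(x,u,\nabla u)\,\p_{ij} u + b(t,x,u,\nabla u)\qquad\text{on $(0,T)\times\Gamma_0$},
\]
whose principal symbol at $u\equiv 0$ equals the tangential part of $\nabla^2\Phi(\nu_{\Gamma_0})/(\nu_{\Gamma_0}\cdot X)$ and is uniformly positive definite thanks to the ellipticity of $\Phi$. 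The Young condition rewrites as a nonlinear boundary relation $G(x,u,\nabla u)=0$ on $\p\Gamma_0$; at $u\equiv 0$ it reduces to $\nabla\Phi(\nu_{\Gamma_0})\cdot\be_n=-\beta$, so the compatibility condition at $t=0$ is precisely the standing hypothesis on $E_0$. A direct computation shows that the component of $\p_p G(x,0,0)$ along the conormal to $\p\Gamma_0$ in $\p\Omega$ is bounded below by a positive multiple of $\Phi(\be_n)-\|\beta\|_\infty$, yielding strict obliqueness.

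With uniform parabolicity, strict obliqueness, the compatibility condition and $C^{\alpha/2,\alpha}$-regular forcing in hand, Solonnikov-type theory for linear parabolic problems with oblique boundary conditions supplies a unique solution of the linearized problem in $C^{1+\alpha/2,\,2+\alpha}([0,\tau]\times\Gamma_0)$ together with the corresponding Schauder estimates. A standard contraction-mapping argument in a small ball of this space centered at $u\equiv 0$ produces, for $\tau$ sufficiently small, a unique short-time solution $u$; transporting back via $X$ yields the desired short-time $\Phi$-curvature flow, and a standard continuation procedure delivers the maximal existence time $T^\dag$. Uniqueness among $C^{2+\alpha}$-families reduces to uniqueness in the parametrized problem, which follows from the linear theory applied to the difference of two solutions. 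The main technical obstacle is the verification, uniformly up to $\p\Gamma_0$, of the strict obliqueness of the anisotropic boundary operator $G$ together with the Schauder regularity of its coefficients: these are what drive the boundary estimates, and the quantitative bound $\|\beta\|_\infty<\Phi(\be_n)$ is used precisely at this point.
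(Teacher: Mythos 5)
Your proposal is correct in outline but follows a genuinely different route from the paper. You represent the evolving surface as a scalar graph $\Gamma_u = \{x+u(x)X(x)\}$ over the fixed manifold $\Gamma_0$, with a carefully chosen transversal field $X$ that is tangent to $\p\Omega$ along $\p\Gamma_0$ so that the constraint $\p\Gamma(t)\subset\p\Omega$ is built into the ansatz; this reduces the problem to a single scalar quasilinear parabolic equation with a nonlinear oblique boundary condition, for which classical Schauder theory for scalar oblique-derivative problems applies. The paper instead works with a full $\R^n$-valued parametrization $p:\sU\to\R^n$ of the evolving hypersurface, so the evolution is a parabolic \emph{system}; to close the system one must impose $n-2$ extra tangential-velocity boundary conditions, and well-posedness is verified by checking the Lopatinski--Shapiro complementary conditions of Solonnikov's theory for parabolic systems (the key algebraic step there is the linear independence of $\be_n$, $N(p^0_x)$, and the tangent vectors $\tau_1^0,\ldots,\tau_{n-2}^0$, which is exactly where $\|\beta\|_\infty<\Phi(\be_n)$ enters). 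Your scalar approach is more elementary (no system complementing-condition bookkeeping, no artificial tangential velocities) and puts the hypothesis $\|\beta\|_\infty<\Phi(\be_n)$ to work through the strict obliqueness of the boundary operator, which is the scalar analogue of the paper's linear-independence computation. What the paper's vector-valued parametrization buys is explicit control of the parametrization itself (not just the surface), which is then used in the stability result (Theorem~\ref{teo:stability_mcf}) and in the tubular-neighborhood flow (Theorem~\ref{teo:flow_tubular_nbhd}); your approach would require a re-parametrization at each continuation step and some extra care to extract an analogue of that stability statement. Finally, you prove uniqueness by applying linear theory to the difference of two solutions within the graph chart, whereas the paper deduces uniqueness from the strong comparison principle (Theorem~\ref{teo:comparison_mcf}); both routes are standard and valid.
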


In section \ref{sec:smooth_flow}, we establish this theorem in a more general form (see Theorem \ref{teo:max_time_exist}) by following the arguments presented in \cite{BKh:2018, KKR:1995}. Specifically, we begin by introducing a convenient parametrization and linearize the problem near the initial and boundary conditions. Subsequently, we employ the Solonnikov method \cite{Solonnikov:1965} to solve the linearized problem, and next, utilizing fixed-point arguments in the H\"older spaces, we solve the nonlinear problem for small time intervals. Finally, through iterative application of this short-time argument, we extend the solution till the maximal time.
Since the fixed-point method is quite robust, the flow is stable with respect to small perturbations of initial condition, $\Phi,$ $\beta$ and $f$ (Theorem  \ref{teo:stability_mcf}). 
Moreover, as in the Euclidean case (see e.g. \cite{BKh:2018,Kholmatov:2023}), the smooth $\Phi$-curvature flow satisfies a strong comparison principle (Theorem \ref{teo:comparison_mcf}), which in particular, shows the uniqueness of the flow.
The stability of the smooth flow  allows to flow tubular neighborhoods of initial sets (Theorem \ref{teo:flow_tubular_nbhd}); we anticipate here that the evolution of tubular neighborhoods is an important  ingredient in the proof of the consistency of GMM (Definition \ref{def:gmm}) with the smooth flow.

The evolution equation \eqref{aniso_mce} can be seen as mean curvature flow of hypersurfaces with a prescribed Neumann-type boundary condition. There are quite a few results related to the well-posedness of the  classical mean curvature flow with Neumann boundary condition,  see e.g. \cite{AW:1994,BKh:2018, BGM:2023, GS:1991, Guan:1996,HL:2021, KKR:1995}; see also \cite{GOT:2021, Huisken:1989,OU:1991III,White:2021} for mean curvature flow with Dirichlet boundary conditions.

When $f\equiv0,$ the evolution equation \eqref{aniso_mce} is a gradient flow for the functional  
\begin{equation}\label{def:capil_funcos}
\sC_{\beta}(E):= P_\Phi(E,\Omega) + \int_{\p\Omega} \beta \chi_E\,d\cH^{n-1},\quad E\in BV(\Omega;\{0,1\}),
\end{equation}
where for simplicity we drop the dependence of $\sC_\beta$ on $\Phi,$
$$
P_\Phi(E,U):=\int_{U\cap \p^*E} \Phi(\nu_E)d\cH^{n-1}
$$
is the $\Phi$-perimeter of $E$ in an open set $U,$ and $\p^*E$ and $\nu_E$ are the reduced boundary and the generalized outer unit normal of $E.$ To maintain the $L^1(\Omega)$-lower semicontinuity and coercivity of the capillary functional we always assume 
\begin{equation}\label{beta_condio}
\exists \eta \in (0,1/2):\quad \|\beta\|_\infty\le (1-2\eta)\Phi(\be_n).
\end{equation}
Under this assumption and a priori estimates (see  \eqref{qanday_kungil_uzsam} below)
\begin{equation}\label{eq:coersive_capillar}
\eta P_\Phi(E) \le \sC_\beta(E) \le P_\Phi(E),\quad E\in \cS.
\end{equation}

In the literature, $\sC_{\beta}$ is usually referred as the anisotropic capillary functional. Originated to the work of Young, Laplace, Gauss and others, this functional allows to consider more general classes of anisotropies $\Phi$ (such as crystalline) and relative adhesion coefficients $\beta$ not necessarily constant (see e.g. \cite{DPhM:2015,Finn:1986,Maggi:2012}). The global minimizers of this functional (usually under a volume constraint) are related to the equlibrium shapes of liquid or crystalline droplets in the container, which sometimes are called Winterbottom shapes \cite{Kholmatov:2024,KSch:2024,Maggi:2012}. Therefore, the problems, such as the existence of minimizers, the regularity of their free boundaries and contact sets, the validity of an anisotropic version of Young's contact-angle law, and the characterization of the shape of the minimizers, have been extensively investigated and addressed in numerous papers in the literature (see e.g. \cite{BB:2012,CM:2007,DGBWQ:2004,DPhM:2015,Finn:1986,JWXZ:2023,Kholmatov:2024,KSch:2024,Maggi:2012} and the references therein). 

To study a weak evolution of droplets, let 
$$
\cS:=\{E\in BV(\Omega;\{0,1\}):\,\, E = E^{(1)}\}
$$
be the metric space endowed with the $L^1(\Omega)$-distance 
$
d(E,F):=|E\Delta F|,
$
where $E^{(1)}$ is the set of points of density $1$ for $E,$ i.e. 
$$
E^{(1)}:=\{x\in\R^n:\,\,\lim\limits_{r\to0^+}\,r^{-n}|B_r(x) \setminus E|=0\},
$$
and let 
\begin{equation*}
\sF_{\beta,f}(E;E_0,\tau,k) :=
\begin{cases}
|E\Delta E_0| & k=0,\\[3mm]
\displaystyle
\sC_{\beta}(E) + \frac{1}{\tau}\int_{E\Delta E_0} \d_{E_0}dx +  \int_k^{k+1}ds \int_{E} f(\tau s, x)\,dx  & k\ge1
\end{cases}
\end{equation*}
be the anisotropic capillary Almgren-Taylor-Wang functional with a nonautonomous (time-dependent) forcing, which generalizes the isotropic setting of \cite{BKh:2018}, where $E,E_0\in \cS,$ $\tau>0,$  $k\in\N_0:=\N\cup\{0\},$ $\d_E(x):=\dist(x,\p E)$ and $f$ is a suitable forcing term.  
When $f\equiv0,$ we shortly write $\sF_{\beta}.$

\begin{definition}[\textbf{Generalized minimizing movements \cite{DeGorgi:93}}]\label{def:gmm}
$\,$
\begin{itemize}
\item[\rm (a)]  Given $\tau>0,$ a family $\{E(\tau,k)\}_{k\in\N_0}\subset\cS$ is called a (discrete) \emph{flat flow} starting from $E_0$ provided that $E(\tau,0):=E_0,$
$$
E(\tau,k) \in {\rm argmin}\,\,\,\sF_{\beta,f}(\cdot; E(\tau,k-1),\tau,k),\quad k\ge1.
$$

\item[\rm(b)] A family $\{E(t)\}_{t\in\R_0^+}$ is called a \emph{generalized minimizing movement} (shortly, GMM) starting from $E_0$ if there exist a sequence $\tau_i\to0^+$ and flat flows $\{E(\tau_i,\cdot)\}$ such that 
\begin{equation}\label{L1_convergence_gmm}
\lim\limits_{i\to+\infty}\,|E(\tau_i,\intpart{t/\tau_i}) \Delta E(t)| = 0,\quad t\ge0,
\end{equation}
where $\intpart{x}$ is the integer part of $x\in\R.$
\end{itemize}

\noindent
The collection of all GMMs starting from $E_0$ and associated to $\sF_{ \beta,f}$ will be denoted by $GMM(\sF_{\beta,f},E_0).$
\end{definition}

In applications it is enough to establish \eqref{L1_convergence_gmm} in any finite interval $[0,T)$ (thus, different $T$ may require different sequences $\tau_j\to0^+,$ but at the end, we can use a diagonal argument).

Starting from the seminal papers \cite{ATW:1993,DeGorgi:93,LS:1995}, the minimizing movement approach has been employed in numerous papers especially in proving the existence of weak (anisotropic) mean curvature flows (see e.g. \cite{ChMP:2015,ChMNP:2019,ChMP:2017}). Moreover, the robustness of the method allows for applications in other settings such as in (anisotropic) mean curvature evolution in Finsler geometry with forcing  \cite{ChDgM:2023}, a volume preserving mean curvature flow \cite{MSS:2016}, mean curvature flow with Dirichlet and Neumann-type boundary conditions \cite{BKh:2018,MT:1999} (see also Theorem \ref{teo:intro_existGMM}), a mean curvature evolution of bounded Caccioppoli partitions including anisotropies and forcing \cite{BChKh:2021,BKh:2018siam}. 

The first main result of this paper is the following 

\begin{theorem}[\textbf{Existence of generalized minimizing movements}]\label{teo:intro_existGMM}
Assume that
\begin{itemize}
\item[\rm(a)]  
\begin{equation}\label{hyp:1}
f\in L_\loc^1(\R_0\times \R^n)\quad\text{and}\quad f^-\in L_\loc^1(\R_0^+;L^1(\R^n)), \tag{H1}
\end{equation}

\item[\rm(b)]
\begin{equation}\label{hyp:2}
\forall T>0\,\,\,\exists \gamma_T>0:\,\,\,
\sup_{0<|A|<\omega_n\gamma_T^n,\,0\le t\le T}\,\,\,\,\tfrac{1}{|A|^{\frac{n-1}{n}}}\int_A |f(t,x)|dx \le \tfrac{c_\Phi \eta n\omega_n^{1/n}}{4}, \tag{H2}
\end{equation}

\item[\rm(c)]
\begin{equation}\label{hyp:f0}
\limsup\limits_{\tau\to0^+}\frac{1}{\tau}\int_0^\tau ds \int_{\R^n}|f(s,x)|dx \in [0,+\infty),\tag{H3} 
\end{equation}

\item[\rm(d)] for any $T>0$  either 
\begin{equation}\label{hyp:f1}
c_T:=\sup_{t\in [0,T]}\,\|f(t,\cdot)\|_{L^\infty(\R^n)}<+\infty \tag{H4'}
\end{equation}
or there exists $c_T>0$ such that 
\begin{equation}\label{hyp:f2}
\int_{\R^n} |f(s,x)-f(s+\tau,x)| dx \le c_T\tau,\quad s,s+\tau\in [0,T], \,\,\,\tau>0. \tag{H4''}
\end{equation}
\end{itemize}
Then for any $E_0\in \cS,$ $GMM(\sF_{\beta,f},E_0)$ is nonempty. Moreover, there exists $C_0:=C_0(\Phi,\beta,f,E_0)>0$ such that for any $E(\cdot)\in GMM(\sF_{\beta,f},E_0)$  
\begin{equation}\label{gmm_holder_cont}
|E(t)\Delta E(s)|\le C_0|t-s|^{1/2},\quad s,t>0\,\,\,\text{with $|t-s|<1$}.
\end{equation}
If $|\p E_0|=0,$ then \eqref{gmm_holder_cont} holds for all $s,t\ge0.$ 

Furthermore, assume that $E_0$ is bounded and for $T>0,$
\begin{equation}\label{hyp:fb2}
\exists R_T,a_T,b_T>0:\quad f^-(t,x) \le a_T+b_T|x|,\quad t\in [0,T],\,\,|x|\ge R_T. \tag{H5}
\end{equation}
Then each $E(\cdot)\in GMM(\sF_{\beta,f},E_0)$ is bounded in $[0,T]$, i.e., there exists $\bar R>0$ such that $E(t)\subset B_{\bar R}(0)$ for any $t\in[0,T].$
\end{theorem}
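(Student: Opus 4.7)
The plan is to follow the Almgren--Taylor--Wang minimizing movements scheme adapted to the capillary setting: the contact-angle boundary term is handled via the coercivity estimate \eqref{eq:coersive_capillar}, while the nonautonomous forcing is controlled through hypotheses (H1)--(H5). I would proceed in four stages.

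\emph{Existence of discrete flat flows.} For fixed $\tau>0$, $k\ge 1$, and $E_0\in\cS$, I would minimize $\sF_{\beta,f}(\cdot;E_0,\tau,k)$ by the direct method. Using the isoperimetric inequality $c_\Phi n\omega_n^{1/n}|E|^{(n-1)/n}\le P_\Phi(E)$ together with (H2), the forcing integral $\bigl|\int_E f\bigr|$ can be absorbed into $\tfrac{\eta}{2}P_\Phi(E)$ on sets with $|E\Delta E_0|<\omega_n\gamma_T^n$; for larger symmetric differences, the displacement penalty $\tau^{-1}\int\d_{E_0}\,dx$ together with the $L^1$-bound in (H1) furnishes an a priori size bound on any minimizing sequence. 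Hence $\sF_{\beta,f}$ is coercive in perimeter and bounded below, and lower semicontinuity of $\sC_\beta$ under \eqref{beta_condio}, combined with $L^1$-continuity of the displacement and forcing integrals on $P_\Phi$-bounded sequences, produces a minimizer.

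\emph{Energy dissipation and Hölder estimate.} Comparing $E(\tau,k)$ with the competitor $E(\tau,k-1)$ and telescoping yields
\begin{equation*}
\sC_\beta(E(\tau,k))+\frac{1}{\tau}\sum_{j=1}^{k}\int_{E(\tau,j)\Delta E(\tau,j-1)}\d_{E(\tau,j-1)}\,dx\le \sC_\beta(E_0)+\Lambda_T,
\end{equation*}
with $\Lambda_T$ depending on $f$ through (H1)--(H2). Combined with \eqref{eq:coersive_capillar} this gives a uniform bound $P_\Phi(E(\tau,k))\le M_T$ for $k\tau\le T$. The standard geometric lemma $\int_{E\Delta F}\d_F\,dx\ge c_n |E\Delta F|^2/\max\{1,P(E)+P(F)\}$, proved by slicing $E\Delta F$ along level sets of $\d_F$, together with Cauchy--Schwarz across consecutive time steps, yields
\begin{equation*}
|E(\tau,k)\Delta E(\tau,j)|\le C_T\sqrt{(k-j)\tau},\qquad 0<j\tau\le k\tau\le T,
\end{equation*}
which is the Hölder bound \eqref{gmm_holder_cont}. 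If $|\p E_0|=0$, hypothesis (H3) additionally gives $\sC_\beta(E(\tau,1))\to\sC_\beta(E_0)$ as $\tau\to 0^+$, so the estimate extends to $j=0$. A diagonal $L^1$-compactness extraction over a countable dense subset of $\R_0^+$, interpolated via the Hölder bound, delivers $E(\cdot)\in GMM(\sF_{\beta,f},E_0)$; the dichotomy (H4$'$)/(H4$''$) ensures equicontinuity in time of the forcing contributions so the limit functional is well-defined.

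\emph{Uniform boundedness under (H5).} This is the most delicate step, since $f^-$ may grow linearly in $|x|$, so one cannot simply compare with a single fixed large ball. I would argue by induction on $k$: assuming $E(\tau,k-1)\subset B_{R_{k-1}}$, test the minimizer $E(\tau,k)$ against the competitor $E(\tau,k)\cap B_{R'}$ for some $R'>\max\{R_{k-1},R_T\}$. The capillary reduction is at least $\eta P_\Phi(E(\tau,k)\setminus B_{R'})$ by \eqref{eq:coersive_capillar}, the displacement integral strictly decreases since $E(\tau,k-1)\subset B_{R_{k-1}}\subset B_{R'}$, and the forcing contribution added is at most $\tau(a_T+b_T R')|E(\tau,k)\setminus B_{R'}|$ by (H5). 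The isoperimetric inequality then forces $|E(\tau,k)\setminus B_{R'}|=0$ once $R'$ is chosen so that $\eta c_\Phi n\omega_n^{1/n}\ge\tau(a_T+b_T R')|E(\tau,k)\setminus B_{R'}|^{1/n}$; a discrete Grönwall-type argument shows that $R_k$ may be taken to grow at most exponentially in $k\tau$, giving a $\tau$-uniform $\bar R$ with $E(t)\subset B_{\bar R}(0)$ for $t\in[0,T]$ that persists in the GMM limit.
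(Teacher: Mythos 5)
Your Step 2 glosses over what the paper identifies as the central obstruction. Testing the minimality of $E(\tau,k)$ against $E(\tau,k-1)$ gives \eqref{minimal_flats}, in which the forcing on both $E(\tau,k)$ and $E(\tau,k-1)$ is integrated over the \emph{same} time window $[k\tau,(k+1)\tau]$. When you sum in $k$, these integrals do not telescope, because the forcing at step $k$ tested on $E(\tau,k-1)$ lives over $[k\tau,(k+1)\tau]$, not over $[(k-1)\tau,k\tau]$; the mismatch between the two windows is precisely what hypotheses (H4$'$) and (H4$''$) are for. Your claimed dissipation bound "with $\Lambda_T$ depending on $f$ through (H1)--(H2)" is therefore not available. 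In the paper, under (H4$'$) one obtains the multiplicative recursion \eqref{zaryad_tamom_buldi} and hence an \emph{exponential} bound $\sC_\beta(E(\tau,k))\le e^{2C_4 c_T T}\sC_\beta(E(\tau,1))$, while under (H4$''$) one obtains the additive correction $\sigma_k\le\sigma_0+c_T k\tau$; in either case (H4$'$)/(H4$''$) is what delivers the $\tau$-uniform perimeter bound needed for compactness. Your mention of (H4$'$)/(H4$''$) as ensuring ``equicontinuity in time of the forcing contributions'' mislocates the role of these hypotheses.

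Your Step 3 is also not correct as written. Truncating by a ball $B_{R'}$ and asserting that ``the capillary reduction is at least $\eta P_\Phi(E(\tau,k)\setminus B_{R'})$ by \eqref{eq:coersive_capillar}'' does not follow: \eqref{eq:coersive_capillar} is a two-sided bound on $\sC_\beta$, not a statement about how $\sC_\beta$ changes under intersection. Submodularity of the $\Phi$-perimeter gives $P_\Phi(E\cap B_{R'},\Omega)-P_\Phi(E,\Omega)\le P_\Phi(B_{R'},\Omega)-P_\Phi(E\cup B_{R'},\Omega)$, and the right-hand side is not $\le 0$ because a ball (or half-ball) is not an isoperimetric minimizer for $\sC_\beta$ in $\Omega$. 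The paper says this explicitly: due to the boundary term one cannot use Wulff shapes or balls, and instead truncates by Winterbottom shapes, whose isoperimetric inequality \eqref{winterbot_ineq9i8} guarantees $\sC_{\beta_0}(W_{\beta_0,r})\le\sC_{\beta_0}(F_\tau\cup W_{\beta_0,r})$ and makes the truncation favourable (Lemma \ref{lem:winterbottom}). Without this, your discrete Gr\"onwall step does not close. A further, smaller point: your ``standard geometric lemma'' $\int_{E\Delta F}\d_F\,dx\gtrsim |E\Delta F|^2/P(F)$ requires lower perimeter-density estimates for $F$ (the paper proves these in Theorem \ref{teo:density_ests} and derives the volume--distance inequality \eqref{volume_inequal} by a Vitali covering); you do not mention density estimates, so that ingredient is also missing.
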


Some comments on assumptions (a)-(d) are in order.

\begin{itemize}
\item The hypothesis (a) is necessary for the well-definiteness of $\sF_{\beta,f}$ and is related to the prescribed curvature functional.

\item The condition (b) will be used in establishing uniform density estimates in Theorem \ref{teo:density_ests} and also in proving the boundedness of minimizers.

\item The hypothesis (c) is a technical assumption implying $\sF_{\beta,f}(E_0;E_0,\tau,k)<+\infty$ for any $E_0\in\cS,$ and will be necessary to estimate the forced capillary energies of flat flows $E(\tau,k)$ with that of $E(\tau,0)$ (see e.g.  \eqref{gako_ubil_on}, \eqref{furo0o} and subsequent estimates). 

\item An example of forcing $f$ satisfying (a)-(c) is  $f(t,x)=a(t)h(x)$ for some $a\in L^\infty(\R_0^+)$ and $h\in L^1(\R^n)\cap L^{p}(\R^n)$ for some $p\ge n.$

\item Assumptions \eqref{hyp:f1}  and \eqref{hyp:f2} in (d) are two (in general different) sufficient conditions for the existence and local $1/2$-time H\"older continuity of GMMs.

\item In \cite{ChDgM:2023} the authors established the local uniform boundedness of GMM for bounded forcing terms using comparison with balls. In this paper we show the same property holds also for forcing terms with at most linear growth, using comparison with Winterbottom shapes in place of balls (see section \ref{subsec:unifbounded_gmm}). 
\end{itemize}

To prove Theorem \ref{teo:intro_existGMM} we apply the already well-established machinery of Almgren-Taylor-Wang and Luckhaus-Sturzenhecker (see e.g. \cite{ATW:1993,BKh:2018,BKh:2018siam,ChDgM:2023,LS:1995,MSS:2016}). The main difficulty here is that as in \cite{ChDgM:2023} because of the time dependence of $f,$ given a flat flow $\{E(\tau,k)\},$ the sequence 
$$
k\mapsto \sC_\beta(E(\tau,k)) + \int_k^{k+1}ds \int_{E} f(\tau s, x)\,dx
$$
is not necessarily nonincreasing. This creates  numerous technical difficulties to bound the perimeter $P(E(\tau,k))$ uniformly in $\tau$ and $k,$ which is important for the sequential compactness of $\{E(\tau,\intpart{t/\tau})\}$ in $\tau.$ To overcome such an issue we use assumption (d). It is worth to mention that under the assumption \eqref{hyp:f2} every GMM is globally $1/2$-H\"older continuous in time, i.e., in \eqref{gmm_holder_cont} the assumption $|t-s|<1$ is not necessary. This is true for instance in the case of an autonomous (time-independent) forcing $f$.

As in the Euclidean case without forcing \cite[Section 6]{BKh:2018}, minimizers of $\sF_{\beta,f}$ satisfy various comparison principles (Theorem \ref{teo:compare_minima}). They yield the following comparison principle for GMMs.

\begin{theorem}[\textbf{Comparison of GMMs}]\label{teo:comparing_gmms}
Assume that $\beta_1\ge \beta_2$ $\cH^{n-1}$-a.e. on $\p\Omega,$ $E_0^{(1)}\prec E_0^{(2)}$ and $f_1 \ge f_2$ a.e. in $\R_0^+\times \Omega.$
Then: 
\begin{itemize}
\item[\rm(a)] for any $E^{(2)}(\cdot) \in GMM(\sF_{\beta_2,f_2},E_0^{(2)})$ there exists $E^{(1)}(\cdot)_* \in GMM(\sF_{\beta_1,f_1},E_0^{(1)})$ such that 
\begin{equation}\label{minimal_gmm}
E^{(1)}(t)_* \subset E^{(2)}(t)\quad\text{for all $t\ge0;$}
\end{equation}

\item[\rm(b)] for any $E^{(1)}(\cdot) \in GMM(\sF_{\beta_1,f_1},E_0^{(1)})$ there exists $E^{(2)}(\cdot)^* \in GMM(\sF_{\beta_2,f_2},E_0^{(2)})$ such that 
$$
E^{(1)}(t)  \subset E^{(2)}(t)^* \quad\text{for all $t\ge0.$}
$$
\end{itemize}
\end{theorem}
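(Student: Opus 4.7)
The strategy is the classical lattice-selection argument at the discrete level, combined with the $L^1$-compactness and $1/2$-Hölder estimates already developed for Theorem \ref{teo:intro_existGMM}; the crucial input is the lattice/comparison property for minimizers of $\sF_{\beta,f}(\cdot;E_0,\tau,k)$ recorded in Theorem \ref{teo:compare_minima}.

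For part (a), fix $E^{(2)}(\cdot)\in GMM(\sF_{\beta_2,f_2},E_0^{(2)})$ with defining sequence $\tau_i\to 0^+$ and flat flows $\{E^{(2)}(\tau_i,k)\}_{k\in\N_0}$ realizing \eqref{L1_convergence_gmm}. I would inductively build a discrete flat flow $\{E_*^{(1)}(\tau_i,k)\}$ for $\sF_{\beta_1,f_1}$ starting from $E_0^{(1)}$ that satisfies
\begin{equation*}
E_*^{(1)}(\tau_i,k)\subset E^{(2)}(\tau_i,k),\qquad k\in\N_0.
\end{equation*}
The base case $k=0$ is given by $E_0^{(1)}\prec E_0^{(2)}$. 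For the inductive step, let $F$ be any minimizer of $\sF_{\beta_1,f_1}(\cdot;E_*^{(1)}(\tau_i,k-1),\tau_i,k)$. Using the inductive inclusion $E_*^{(1)}(\tau_i,k-1)\subset E^{(2)}(\tau_i,k-1)$ together with $\beta_1\ge\beta_2$ and $f_1\ge f_2$, Theorem \ref{teo:compare_minima} guarantees that the set $F\cap E^{(2)}(\tau_i,k)$ (taken with its density-1 representative in $\cS$) is still a minimizer of the $\sF_{\beta_1,f_1}$ problem with seed $E_*^{(1)}(\tau_i,k-1)$. Setting $E_*^{(1)}(\tau_i,k):=F\cap E^{(2)}(\tau_i,k)$ produces the required flat flow with inclusion preserved at each step.

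To pass to the limit, observe that the uniform perimeter bounds, equi-$L^1$ estimates and $1/2$-Hölder in time estimates obtained in the proof of Theorem \ref{teo:intro_existGMM} apply verbatim to $\{E_*^{(1)}(\tau_i,k)\}$. A diagonal extraction over a countable dense subset of $\R_0^+$, extended to every $t\ge 0$ by Hölder continuity, produces a (not relabelled) subsequence along which $E_*^{(1)}(\tau_i,\intpart{t/\tau_i})$ converges in $L^1(\Omega)$ to some $E_*^{(1)}(t)\in\cS$ for all $t\ge 0$; hence $E_*^{(1)}(\cdot)\in GMM(\sF_{\beta_1,f_1},E_0^{(1)})$. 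Passing to the $L^1$-limit in $E_*^{(1)}(\tau_i,k)\subset E^{(2)}(\tau_i,k)$ yields $|E_*^{(1)}(t)\setminus E^{(2)}(t)|=0$, and since both sets coincide with their density-1 points, the genuine set inclusion \eqref{minimal_gmm} follows. Part (b) is dual: given $E^{(1)}(\cdot)$ with its defining flat flows, inductively define $E^{(2)}(\tau_i,k)^*:=G\cup E^{(1)}(\tau_i,k)$ where $G$ is any minimizer of $\sF_{\beta_2,f_2}(\cdot;E^{(2)}(\tau_i,k-1)^*,\tau_i,k)$; the dual lattice statement from Theorem \ref{teo:compare_minima} ensures this union is again a minimizer, and the same compactness argument closes the proof.

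The principal obstacle is verifying that the $\cap$- and $\cup$-selections indeed remain minimizers of the appropriate Almgren--Taylor--Wang--type functionals in the presence of the capillary term $\int_{\p\Omega}\beta\chi_E\,d\cH^{n-1}$ and the time-dependent forcing; this is precisely the comparison-of-minima content of Theorem \ref{teo:compare_minima}, and once it is invoked, the rest of the argument is routine induction plus diagonal $L^1$-compactness.
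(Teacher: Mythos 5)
Your proposal is correct and follows essentially the same route as the paper: construct, for each $\tau_i$, a flat flow for $\sF_{\beta_1,f_1}$ that stays inside $E^{(2)}(\tau_i,k)$ at every discrete step, then pass to the limit via the compactness and Hölder estimates from Theorem \ref{teo:intro_existGMM}. The only cosmetic difference is that you realize the discrete inclusion by the lattice selection $F\cap E^{(2)}(\tau_i,k)$ (which is the \emph{proof mechanism} behind Theorem \ref{teo:compare_minima}(c), rather than its literal statement), whereas the paper invokes Theorem \ref{teo:compare_minima}(c)--(d) to take the \emph{minimal} minimizer at each step; both yield a flat flow contained in $E^{(2)}(\tau_i,\cdot)$ and the rest of the argument coincides.
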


Finally, we study the relation of GMM with the smooth flow solving \eqref{aniso_mce}. 

\begin{theorem}\label{teo:gmm_vs_smooth}
Assume that $n\le 3$ and $\Phi$ is an elliptic $C^{3+\alpha}$-anisotropy in $\R^n$ for some $\alpha\in(0,1]$, or $n \le 4$ and $\Phi$ is Euclidean. Let $\beta\in C^{1+\alpha}(\p\Omega),$ $f\in C^{\frac{\alpha}{2},\alpha}([0,+\infty)\times\cl{\Omega})$  and $\{E(t)\}_{t\in[0,T^\dag)}$  be a smooth $\Phi$-curvature flow starting from $E_0,$ with forcing $f$ and anisotropic contact angle $\beta.$ Then for any $F(\cdot)\in GMM(\sF_{\beta,f},E_0)$ 
$$
E(t) = F(t),\quad t\in [0,T^\dag).
$$
\end{theorem}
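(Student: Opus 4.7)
The plan is to trap any $F(\cdot)\in GMM(\sF_{\beta,f},E_0)$ between two classical $\Phi$-curvature flows whose initial data slightly shrink and enlarge $E_0$, and then let the perturbation vanish. Fix $T\in(0,T^\dag)$. By Theorem \ref{teo:flow_tubular_nbhd}, for every small $\epsilon>0$ there exist smooth $\Phi$-curvature flows $\{E^{\pm\epsilon}(t)\}_{t\in[0,T]}$ starting from $C^{2+\alpha}$-perturbations $E_0^{\pm\epsilon}$ of $E_0$ that satisfy the anisotropic contact-angle condition with the same $\beta$ and obey $E_0^{-\epsilon}\subset E_0\subset E_0^{+\epsilon}$ with $|E_0^{\pm\epsilon}\Delta E_0|\to 0$. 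By the stability Theorem \ref{teo:stability_mcf}, $E^{\pm\epsilon}(t)\to E(t)$ in $L^1(\Omega)$ uniformly in $t\in[0,T]$ as $\epsilon\to0^+$.

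Next, the comparison Theorem \ref{teo:comparing_gmms}, applied with $\beta_1=\beta_2=\beta$, $f_1=f_2=f$ and the appropriate choices $(E_0^{(1)},E_0^{(2)})=(E_0^{-\epsilon},E_0)$ respectively $(E_0,E_0^{+\epsilon})$, produces GMMs $G^{\mp\epsilon}(\cdot)\in GMM(\sF_{\beta,f},E_0^{\mp\epsilon})$ with
$$
G^{-\epsilon}(t)\subset F(t)\subset G^{+\epsilon}(t),\qquad t\in[0,T].
$$
The problem therefore reduces to the \emph{consistency statement}: any GMM starting from the regular initial data $E_0^{\pm\epsilon}$ coincides on $[0,T]$ with the smooth flow $E^{\pm\epsilon}(\cdot)$. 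Granting this, $E^{-\epsilon}(t)\subset F(t)\subset E^{+\epsilon}(t)$; sending $\epsilon\to 0^+$ forces $F(t)=E(t)$ on $[0,T]$, and arbitrariness of $T<T^\dag$ concludes.

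To prove the consistency statement, fix a defining flat flow $\{E^{\pm\epsilon}(\tau,k)\}$ of $G^{\pm\epsilon}$. The minimality at step $k$ together with the first variation of $\sF_{\beta,f}(\cdot;E^{\pm\epsilon}(\tau,k-1),\tau,k)$ yields the Euler--Lagrange equation
$$
\kappa^\Phi_{\p E^{\pm\epsilon}(\tau,k)}+\frac{\sdist\bigl(\cdot,E^{\pm\epsilon}(\tau,k-1)\bigr)}{\tau}+\int_k^{k+1}f(\tau s,\cdot)\,\d s=0\quad\text{on $\Omega\cap\p E^{\pm\epsilon}(\tau,k)$,}
$$
together with $\nabla\Phi(\nu)\cdot\be_n=-\beta$ on $\p\Omega\cap\p E^{\pm\epsilon}(\tau,k)$. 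Using the smooth flows $\{E^{\pm\epsilon}(t)\}$ themselves (slightly inflated by $O(\tau)$ to absorb the forcing and the curvature increment) as outer and inner barriers, and iterating a discrete comparison at each step $k$, one obtains a uniform-in-$[0,T]$ Hausdorff bound between $E^{\pm\epsilon}(\tau,k)$ and $E^{\pm\epsilon}(k\tau)$ that vanishes as $\tau\to 0^+$. Passing to the limit in \eqref{L1_convergence_gmm} then identifies $G^{\pm\epsilon}(t)=E^{\pm\epsilon}(t)$.

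The main obstacle is precisely this consistency step, and it is here that the dimensional restriction enters. The barrier comparison needs each ATW minimizer $E^{\pm\epsilon}(\tau,k)$ to be regular up to the contact boundary on $\p\Omega$ --- at least $C^{1,\alpha}$ --- so that the Euler--Lagrange equation and the anisotropic Young condition have a pointwise meaning and the minimizer can be touched by the smooth barrier without violating minimality. For an elliptic $C^{3+\alpha}$-anisotropy, boundary $\epsilon$-regularity for capillary almost-minimizers is presently available only when $n\le 3$, while in the Euclidean setting the classical Almgren/De Giorgi--Federer theory together with the boundary regularity of De Philippis--Maggi for capillary minimizers allows $n\le 4$.
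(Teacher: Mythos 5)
Your proposal has the right skeleton --- trap the flat flow between smooth inner and outer barriers, propagate the inclusion by a discrete comparison at each step, then let the trapping scale tend to zero --- and it correctly identifies that the dimension restriction enters through boundary regularity of the capillary ATW minimizers (so that the Euler--Lagrange equation and anisotropic Young condition hold pointwise and the barrier cannot touch a singular point). However, several of the steps as written have genuine gaps.

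First, the preliminary reduction is circular: in Theorem \ref{teo:gmm_vs_smooth} the initial set $E_0$ is already admissible (a smooth $\Phi$-curvature flow starts from it), so replacing ``consistency for $E_0$'' by ``consistency for smooth perturbations $E_0^{\pm\epsilon}$'' does not simplify anything --- you are reducing the statement to itself. The paper instead directly takes $E_0$ and trapping sets $G_0^\pm[0,s,0]$ from the foliation, which contain/are contained in $E_0$ automatically by Corollary \ref{cor:time_foliations}~(a).

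Second, ``slightly inflating the smooth flow by $O(\tau)$'' is not an adequate barrier. Proposition \ref{prop:atw_nonpol} requires the barrier $G_\tau$ to satisfy the contact-angle condition with $\beta\mp s$ for some \emph{fixed} $s>0$ (a strict angle defect is essential to conclude $\cl{\pOmega E_\tau}\cap\cl{\pOmega G_\tau}=\emptyset$ at contact-boundary points), and the supersolution/subsolution inequality must have a margin $\pm s/2$ that is uniform over the iteration. An $O(\tau)$ inflation of the same smooth flow keeps the same contact angle $\beta$ and does not yield such a uniform margin; one must build a foliation of smooth barriers with perturbed angle and perturbed forcing $f\mp s$, as in Theorem \ref{teo:flow_tubular_nbhd} and Proposition \ref{prop:time_regular_sdist}.

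Third, Proposition \ref{prop:atw_nonpol} is not a propagation result: it takes $E_\tau\subset G_\tau$ as a \emph{hypothesis} and upgrades it to $E_\tau\prec G_\tau$. To deploy it in an induction on $k$ one needs an a priori containment of the ATW minimizer inside the outermost leaf of the foliation, and the paper supplies this via the comparison with truncated Wulff/Winterbottom shapes (Theorem \ref{teo:compare_with_ball}) before invoking Proposition \ref{prop:atw_nonpol} through a continuity argument in the foliation parameter $r$ (defining $\bar r$ as the smallest parameter for which containment holds and showing $\bar r=0$ by contradiction). Your sketch has no analogue of this step.

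Finally, the degradation of the trapping parameter across the whole interval $[0,T]$ must be quantified: the paper iterates over blocks of length $\bar t$, degrades $s$ to $4g(2s)$ at the end of each block via the modulus of continuity $g$ from Theorem \ref{teo:flow_tubular_nbhd}~(b), and uses $g(0)=0$ to send the accumulated parameter $a_N(s)$ to zero as $s\to0^+$ after $\tau_j\to0^+$. Your ``uniform-in-$[0,T]$ Hausdorff bound vanishing as $\tau\to0^+$'' conflates the two limits and would not follow from a naive iteration, since the margin is controlled by $s$, not by $\tau$.
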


Similar consistency result in the three-dimensional Euclidean case without forcing has been recently obtained in \cite{Kholmatov:2023} using  the techniques originated to \cite{ATW:1993}. To prove Theorem \ref{teo:gmm_vs_smooth} we adapt those techniques adding anisotropy and also forcing. Note that the smallness of dimension $n$ implies that the free boundary $\pOmega E_\tau$ of minimizers $E_\tau$ of $\sF_{\beta,f}(\cdot;E_0,\tau,k)$ is a $C^{2}$-hypersurface up to the boundary \cite{DPhM:2015,DPhM:2017}, satisfying the anisotropic contact angle condition with $\beta$. This allows to establish smooth inner and outer barriers for minimizers of $\sF_{\beta,f}$ in Proposition \ref{prop:atw_nonpol}. To extend this proposition to higher dimensions one need to show that a smooth hypersurface $\Gamma\subset\Omega$ with boundary in $\p\Omega$ can be an outer or inner barrier for $\pOmega E_\tau$ either only at points of the reduced boundary or only at regular points of $\pOmega E_\tau.$ Recall that the assertion for the reduced boundary is true since there are no singular minimizing cones for $P_\Phi$ containing a halfspace (see \cite[Lemma 7.3]{ATW:1993}). However, currently not much seems known about such a behaviour of singular minimizing cones  for capillary functional $\sC_\beta.$

The paper is organized as follows. In section \ref{sec:exist_gmm} we provide a full proof of Theorem \ref{teo:intro_existGMM}. Various comparison results are established in section \ref{sec:comparison}. In section \ref{sec:smooth_flow} we establish the well-posedness of \eqref{aniso_mce}, proving Theorem \ref{teo:short_intro} in more general form, and various properties of the smooth flows. Finally, we prove Theorem \ref{teo:gmm_vs_smooth} in section \ref{sec:consistence}. We conclude the paper with an  appendix, where we obtain some a priori estimates for capillary functional and a characterization of  elliptic smooth anisotropies.

\subsection*{Acknowledgements.} 

I acknowledge support from the Austrian Science Fund  (FWF) Lise Meitner Project M2571 and Stand-Alone Project P33716. Also, I am grateful to Francesco Maggi for his discussions on the regularity of contact sets of minimizers of the capillary functional, and in particular, showing his paper \cite{DPhM:2017} with Guido De Philippis.

\section{Existence of GMM}\label{sec:exist_gmm}

\subsection*{Notation} 

Given an anisotropy $\Phi$ in $\R^n,$ the dual anisotropy is defined as 
$$
\Phi^o(x):=\max_{\Phi(y)=1}\,\,x\cdot y.
$$
The following Young inequality holds:
$$
x\cdot y \le \Phi^o(x)\Phi(y),\quad x,y\in\R^n.
$$
The set $W^\Phi:=\{\Phi^o(x)\le 1\}$ are called the \emph{Wulff shape} for $\Phi.$ With a slight abuse of the definition, the translations and scalings $W_r^\Phi(x) = x + rW^\Phi$ of $W^\Phi$ are still called Wulff shape.

We say an anisotropy $\Phi$ in $\R^n$ is $C^{k+\alpha}$ if $\Phi\in C_\loc^{k+\alpha}(\R^n\setminus\{0\}).$ 
We denote by $\nabla\Phi$ and $\nabla^2\Phi$ the spatial gradient and Hessian of $\Phi.$ 
If there exists $\gamma>0$ such that $x\mapsto \Phi(x)-\gamma|x|$ is also an anisotropy, then $\Phi$ is called \emph{elliptic}. By Proposition \ref{prop:elliptic_anis_propo} a $C^{k+\alpha}$-anisotropy with $k\ge2$ is elliptic if and only if its dual is an elliptic $C^{k+\alpha}$-anisotropy.

Given an anisotropy $\Phi$, we define $$
\d_E^\Phi(x):=\inf\{\Phi(x-y):\,\, y\in \Omega\cap \p^*E\},\qquad 
\sd_E^\Phi(x):=
\begin{cases}
\d_E^\Phi(x) & x\in \Omega\setminus E,\\
-\d_{E^c}^\Phi(x) & x\in E,
\end{cases}
\quad x\in\Omega,
$$
for $E\in \cS.$ When $\Phi$ is Euclidean, we write shortly $\d_E$ and $\sd_E.$ 

To shorten the notation we use
$$
\pOmega E:= \Omega\cap \p E
$$
and
$$
E\prec F\qquad \Longleftrightarrow \qquad E\subset F\quad \text{and}\quad \dist(\pOmega E, \pOmega F)>0
$$
for $E,F\in \cS.$ Note that  
\begin{equation}\label{compare_trunc_sdist}
E\subset  F\quad\Longleftrightarrow\quad \sd_E^\Phi \ge \sd_F^\Phi \,\,\,\text{in $\Omega$}
\qquad \text{resp.}\qquad 
E\prec F\quad\Longleftrightarrow\quad \sd_E^\Phi > \sd_F^\Phi \,\,\,\text{in $\Omega.$}
\end{equation}

The following proposition shows the connection between the regular surfaces and distance functions (see also \cite[Proposition 2.1]{Kholmatov:2023}). 

\begin{proposition}\label{prop:regular_distance}
Let $\Gamma$ be a $C^{2+\alpha}$-hypersurface (not necessarily connected and with or without boundary) in an open set $Q\subset\R^n$ for some $\alpha\in[0,1]$. Then:

\begin{itemize}
\item[\rm (a)] for any $x\in \Gamma$ there exists $r_x>0$ such that 
\begin{itemize}
\item[$\bullet$] $\Gamma$ divides $B_{r_x}(x)$ into two connected components,

\item[$\bullet$] $\dist(\cdot,\Gamma)\in C^{2+\alpha}(B_{r_x}(x)\setminus \Gamma);$
\end{itemize}

\item[\rm(b)] if $\Gamma$ is compact and has no boundary, then $\inf_{x\in \Gamma} r_x >0,$ i.e., the radius $r_x$ in (a) can be taken uniform in $x;$

\item[\rm(c)] if $\Gamma = Q\cap \p E$ for some $E\subset Q$ and $\Phi$ is an elliptic $C^{3+\alpha}$-anisotropy,  then for any $x\in \Gamma$ there exists $r_x>0$ such that $B_r(x)\subset\Omega$ and $\sd_E^\Phi \in C^{2+\alpha}(B_{r_x}(x)).$ In this case, $\nabla \sd_E^\Phi(x) = \nu_E(x)^{\Phi^o}$ for any $x\in \Gamma,$ where $\nu_E$ is the outer unit normal of $E$ and
\begin{equation}\label{eta_Phio}
\eta^{\Phi^o} = \tfrac{\eta}{\Phi^o(\eta)},\qquad 0\ne \eta\in\R^n.
\end{equation}

\item[\rm(d)] Assume that in (c), additionally, $Q$ has  a $C^{2+\alpha}$-boundary. Then under assumptions of (b), for any $x_0\in \p Q\cap \p\Gamma$ and for any $\eta\in \S^{n-1}$ with $\eta\cdot \nu_Q(x_0) <0,$ where $\nu_Q(x_0)$ is the outer unit normal of $\p Q$ at $x_0,$ one has 
$$
\sd_E^\Phi(x_0 + s\eta) - \sd_E^\Phi(x_0) = s \Big(\nu_E(x_0)^{\Phi^o}\cdot \eta + o(1)\Big)\quad\text{as $s\to0^+$}.
$$
\end{itemize}
\end{proposition}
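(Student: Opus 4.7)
I will proceed through (a)--(d) in order. Parts (a)--(b) rely on the classical Euclidean tubular neighborhood construction, part (c) adapts it to the anisotropic metric via $\Phi^o$, and part (d) reduces to (c) via an extension of $\Gamma$ across $\p Q$.

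For (a), I would fix $x\in\Gamma$ and, after a rotation, realize $\Gamma$ locally as the graph of a $C^{2+\alpha}$-function $\phi$ over the tangent hyperplane $T_x\Gamma$. The Euclidean normal map $F(y,t)=y+t\nu(y)$ is $C^{1+\alpha}$ on a bilateral strip about $\Gamma\cap B_\rho(x)$ and has invertible differential at $t=0$; the inverse function theorem then yields a $C^{1+\alpha}$-diffeomorphism from this strip onto a neighborhood of $x$, whose inverse is $(\pi,\sd)$ with $\pi$ the nearest-point projection and $\sd$ the signed distance. The upgrade of $\sd$ from $C^{1+\alpha}$ to $C^{2+\alpha}$ off $\Gamma$ comes from the classical identity expressing $\nabla^2\sd$ along the normal ray in terms of the principal curvatures of $\Gamma$, which are $C^{\alpha}$ under our assumption. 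Part (b) is then a routine compactness argument: a finite subcover of $\Gamma$ by the neighborhoods from (a), or a contradiction from a sequence $x_k\in\Gamma$ with $r_{x_k}\to 0$ and $x_k\to x_\infty\in\Gamma$, yields a uniform positive radius.

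For (c), the ellipticity of $\Phi$ passes to $\Phi^o$ by Proposition~\ref{prop:elliptic_anis_propo}, so $\Phi^o$ is an elliptic $C^{3+\alpha}$-anisotropy and the Wulff shapes are uniformly convex and smooth. I would replace the Euclidean normal map by the anisotropic one $F^\Phi(y,t)=y+t\,\nabla\Phi^o(\nu_E(y))$, whose differential at $t=0$ is nondegenerate by uniform convexity of $\Phi^o$, and run the same inverse function theorem argument to obtain $\sd_E^\Phi\in C^{2+\alpha}$ in an anisotropic tubular neighborhood of $\Gamma$. For the identity $\nabla\sd_E^\Phi(x)=\nu_E(x)^{\Phi^o}$ at $x\in\Gamma$, I would observe that $\sd_E^\Phi$ satisfies the anisotropic eikonal equation $\Phi^o(\nabla\sd_E^\Phi)=1$ off $\Gamma$, that its zero level set is $\Gamma$ (so $\nabla\sd_E^\Phi(x)$ is Euclidean-parallel to $\nu_E(x)$), and that the sign convention $\sd_E^\Phi>0$ in $\Omega\setminus E$ together with the one-homogeneity $\Phi^o(\lambda\nu_E)=\lambda\Phi^o(\nu_E)$ pins the proportionality constant to $1/\Phi^o(\nu_E(x))$, giving the stated formula.

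For (d), I would extend $\Gamma$ across $\p Q$ to a boundaryless $C^{2+\alpha}$-hypersurface $\widetilde\Gamma$, which is feasible because $\Gamma$ is $C^{2+\alpha}$ up to its boundary $\p\Gamma\subset\p Q$ and $\p Q$ is itself $C^{2+\alpha}$. Applying (c) to $\widetilde\Gamma$ (as the boundary of a set $\widetilde E\supset E$) yields an anisotropic signed distance $\widetilde{\sd}\in C^{2+\alpha}$ in a full Euclidean neighborhood of $x_0$, with $\nabla\widetilde{\sd}(x_0)=\nu_E(x_0)^{\Phi^o}$, so Taylor's theorem immediately gives the claimed expansion for $\widetilde{\sd}$ with remainder $O(s^2)$. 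The remaining, and in my view most delicate, step is to verify that $\sd_E^\Phi(x_0+s\eta)=\widetilde{\sd}(x_0+s\eta)+o(s)$ as $s\to0^+$ when $\eta\cdot\nu_Q(x_0)<0$. My plan is to note that the anisotropic projection $\pi^\Phi(x_0+s\eta)$ onto $\widetilde\Gamma$ converges to $x_0\in\p\Gamma$ at rate $O(s)$ tangentially: if it already lies in $\Gamma$ the two distances coincide, while if it lies in the extension $\widetilde\Gamma\setminus\Gamma$ it sits at distance $O(s)$ from $\p\Gamma$, and replacing it by a nearby competitor in $\overline\Gamma$ (project along $\widetilde\Gamma$ to $\p\Gamma$ and push slightly inside $\Gamma$) inflates the $\Phi$-distance by only $O(s^{1+\alpha})$ thanks to the $C^{2+\alpha}$-smoothness of $\widetilde\Gamma$, and this error is absorbed into the $o(s)$ term.
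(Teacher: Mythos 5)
The paper itself does not supply a detailed argument here: it only remarks that the assertions ``can be proven using the local geometry of $\Gamma$ and $Q$, i.e.\ passing to the local coordinates'' with a pointer to Delfour--Zol\'esio. So there is no written proof to compare against; I can only assess the internal correctness of your argument.

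Your treatment of (a)--(c) is sound and follows the expected local-coordinate route: the anisotropic normal map $F^\Phi(y,t)=y+t\nabla\Phi^o(\nu_E(y))$ is the right replacement for the Euclidean one (indeed the first-order condition $\nabla\Phi(x-y)\parallel\nu_E(y)$ forces projections to travel along $\nabla\Phi^o(\nu_E)$), the inverse function theorem gives the local diffeomorphism, the curvature identity upgrades the regularity of the distance off $\Gamma$, and the anisotropic eikonal equation $\Phi^o(\nabla\sd_E^\Phi)=1$ combined with $\nabla\sd_E^\Phi\parallel\nu_E$ and one-homogeneity pins down $\nabla\sd_E^\Phi=\nu_E^{\Phi^o}$. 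Part (b) by compactness is routine.

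Part (d), however, contains a genuine gap at exactly the step you flag as delicate. You claim that replacing the unconstrained minimizer $y^*\in\widetilde\Gamma\setminus\Gamma$ by its projection $y'\in\p\Gamma$ inflates $\Phi(x_s-y)$ by only $O(s^{1+\alpha})$. This is not true. While $y^*$ is a critical point of $y\mapsto\Phi(x_s-y)$ on $\widetilde\Gamma$ and $|y'-y^*|=O(s)$, the second-order Taylor bound controlling the inflation involves the Hessian $\nabla^2\Phi$ evaluated at $x_s-y$, and $\nabla^2\Phi$ is positively $(-1)$-homogeneous: at the foot point $|x_s-y^*|=\widetilde\d(x_s)=O(s)$, so $\|\nabla^2\Phi(x_s-y^*)\|=O(1/s)$. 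The resulting estimate is $O(|y'-y^*|^2/|x_s-y^*|)=O(s^2/s)=O(s)$, which is of the same order as the quantity you want to bound, not $o(s)$. A concrete Euclidean example shows the discrepancy is genuinely linear: take $n=2$, $Q=\Omega=\{y_2>0\}$, $E=\{y_1<y_2\}\cap\Omega$, $\Gamma=\{(t,t):t>0\}$, $x_0=(0,0)$, $\nu_E(x_0)=(1,-1)/\sqrt2$, $\nu_Q=(0,-1)$, and $\eta=(\cos\phi,\sin\phi)$ with $\phi\in(3\pi/4,\pi)$. Then $\eta\cdot\nu_Q<0$, yet the nearest point to $x_0+s\eta$ on the full line $\widetilde\Gamma$ falls into $\{y_2<0\}$, the constrained minimizer is at $x_0$ itself, and $\sd_E(x_0+s\eta)=-s$, whereas $s\,\nu_E(x_0)\cdot\eta=s(\cos\phi-\sin\phi)/\sqrt2\ne -s$; the difference is a nonzero multiple of $s$. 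So the reduction $\sd_E^\Phi(x_0+s\eta)=\widetilde\sd(x_0+s\eta)+o(s)$ fails precisely when the anisotropic foot of the perpendicular leaves $Q$, i.e.\ when the tangential component of $\eta$ along $\widetilde\Gamma$ points out of $Q$.

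In fact, this example suggests that the stated expansion in (d) cannot hold for \emph{every} $\eta$ with $\eta\cdot\nu_Q(x_0)<0$ without some additional hypothesis guaranteeing that the foot of the anisotropic perpendicular from $x_0+s\eta$ to $\widetilde\Gamma$ remains in $Q$ (for instance a relation between $\eta$, $\nu_E(x_0)$ and $\nu_Q(x_0)$, or a contact-angle restriction that is in force in the one place the paper invokes (d), namely the proof of the strong comparison principle). If you want to salvage your extension approach, you must either restrict $\eta$ so that the tangential part of $\eta$ points into $Q$, or treat the case where the unconstrained foot exits $Q$ separately and compute the one-sided derivative directly from the constrained minimization over $\cl{\Gamma}$; in that regime the correct expansion replaces $\nu_E(x_0)^{\Phi^o}\cdot\eta$ by the value obtained from the competitor on $\p\Gamma$, and the two coincide only when the foot stays inside $Q$. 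As written, your $O(s^{1+\alpha})$ estimate and the resulting proof of (d) do not go through.
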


These assertions can be proven using the local geometry of $\Gamma$ and $Q,$ i.e. passing to the local coordinates (see also \cite{DZ:2011}).

Given an elliptic $C^2$-anisotropy $\Phi$ and a $C^2$-hypersurface $\Gamma\subset\R^n$ oriented by a unit normal $\nu_\Gamma,$ the \emph{$\Phi$-curvature} of $\Gamma$ at $x\in\Gamma$ 
is defined as \cite[Section 2.2]{ATW:1993}
$$
\kappa_\Gamma^\Phi(x) := \Trace [\nabla^2\Phi^o(\nu_\Gamma(x))\nabla^2R(x)],
$$
where $R$ is any $C^2$-function in a ball $B_r(x)$ with small radius $r>0$  satisfying
$$
B_r(x) \cap \Gamma = \{R=0\} \quad\text{and}\quad \nabla R(x) = \nu_E(x).
$$
Writing $\Gamma$ as a graph near $x,$
one can show that $\kappa_\Gamma^\Phi(x)$ is independent of the choice of $R.$ 
When $\Gamma=\pOmega E$ for some $E\subset \Omega,$ we orient $\Gamma$ along the outer unit normal of $E$ and write 
$$
\kappa_E^\Phi:=\kappa_\Gamma^\Phi.
$$
By this convention, convex sets admit a nonnegative $\Phi$-curvature. We also set 
$$
\|II_E\|_\infty:= \sup_{x\in \Gamma} \,|II_\Gamma(x)|,
$$
where $II_\Gamma$ is the second fundamental form of $\Gamma.$ 

Recall that if $E_t = (\Id + t X)(E)$ is a $C^1$-perturbation of $E\subset\Omega$ for some $X\in C_c^1(U;\R^n)$ with $X\cdot \be_n=0$ on $\p\Omega,$  then the first variation of the capillary functional $\sC_\beta$ at $E$ is computed as \cite{JWXZ:2023}
\begin{align}\label{euler_lagrange}
\frac{d}{dt}\,\sC_\beta(E_t) \Big|_{t = 0} = \int_{\Gamma}  \kappa_E^\Phi \,X\cdot \nu_E\,d\cH^{n-1} + \int_{\p\Omega\cap \p\Gamma} X\cdot \Big[\cR_{\pi/2}(\pi(\nabla\Phi(\nu_\Gamma)) ) + \beta \bn\Big]\,d\cH^{n-2},
\end{align}
where $\Gamma:=\pOmega E,$ $\nu_\Gamma$ is the outer unit normal to $\Gamma,$ $\bn$ is the conormal of $\Gamma$ at its boundary points  (i.e., tangent vector to $\Gamma,$ but normal to $\p\Gamma$), $\pi$ is the orthogonal projection onto the hyperplane $T$ spanned to $\{\nu_\Gamma,\bn\}$ (both defined at $\p\Gamma$), $\cR_\theta$ is the counterclockwise rotation in $T$ by angle  $\theta.$ 

Recall that by \cite[Lemma 2.1]{BKh:2018} for any $E\in \cS$
\begin{equation}\label{neat_locality}
\chi_E\in L^1(\p\Omega) \quad\text{and}\quad  E\in \cS.
\end{equation}
By \eqref{neat_locality} we can rewrite the anisotropic capillary functional \eqref{def:capil_funcos} as
$$
\sC_\beta(F) = P_\Phi(F) + \int_{\p\Omega} (\beta-\Phi^o(\be_n))\chi_F\,d\cH^{n-1}
$$
Moreover, since $G\chi_G\in L^1(\Omega)$ for any $G\in \cS,$ up to an additive constant independent of $E$ we can write
$$
\sF_{\beta,f}(E;E_0,\tau,k):=\sC_\beta(E) + 
\tfrac{1}{\tau}\int_E\sd_{E_0}dx +
\tfrac{1}{\tau}\int_{k\tau}^{(k+1)\tau}\int_E f(s,x)dxds,\quad k\ge1.
$$

\subsection{Proof of Theorem \ref{teo:intro_existGMM}}
 
For the convenience of the reader we divide the proof into smaller steps. In each step we highlight which of the assumptions on $f$ (mentioned in Theorem \ref{teo:intro_existGMM}) will be used in that step.

\subsubsection{Existence of minimizers}

Given $E_0\in \cS,$ $\tau>0$ and $k\in\N,$ let $\{E_i\}$ be a minimizing sequence of $\sF_{\beta,f}(\cdot;E_0,\tau,k).$ We may assume that $\sF_{\beta,f}(E_i;E_0,\tau,k)\le \sF_{\beta,f}(E_0\cap B_R; E_0,\tau,k)$ for some $R>0$ and for all $i\ge1$ (we need such a truncation with $B_R(0)$ because a priori $E_0$ is not bounded, and thus, in general the integral $\int_k^{k+1}ds \int_{E_0} f^+(\tau s,x)dx$ need not to be finite). Then 
\begin{multline}\label{moskav_jangi}
\sC_\beta(E_i) + \frac{1}{\tau}\int_{E_i\setminus E_0}\d_{E_0}dx + \frac{1}{\tau}\int_{k\tau}^{(k+1)\tau}ds\int_{E_i}f^+(\tau s,x)dx \\
\le \sF_{\beta,f}(E_0\cap B_R;E_0,\tau,k) + \frac{1}{\tau}\int_{E_0}\d_{E_0}dx + \frac{1}{\tau}\int_{k\tau}^{(k+1)\tau}ds\int_{\R^n}f^-(\tau s,x)dx:=C_1.
\end{multline}
In particular, by \eqref{eq:coersive_capillar} $\{P_\Phi(E_i)\}$ is bounded, and hence, by $L_\loc^1(\Omega)$-compactness of $\cS$, there exists $E_\infty\in BV_\loc(\Omega;\{0,1\})$ such that, up to a relabelled subsequence,
$E_i\to E$ in $L_\loc^1(\Omega)$ as $i\to+\infty.$ Moreover, for any bounded $U\subset\Omega$
$$
P_\Phi(E,U)\le \liminf\limits_{i\to+\infty} P_\Phi(E_i,U) \le \liminf\limits_{i\to+\infty} P_\Phi(E_i) \le \frac{1}{\eta}\,\sup_i \sC_\beta(E_i)\le \frac{C_1}{\eta}.
$$
Thus, letting $U\nearrow \R^n$ we get $P_\Phi(E)<+\infty.$ Moreover, by the isoperimetric inequality
\begin{equation}\label{isoperimetric_ineq}
P_\Phi(E) \ge c_{\Phi,n} |E|^{\frac{n-1}{n}},\quad c_{\Phi,n} = \tfrac{P_\Phi(W^\Phi)}{|W^\Phi|^{\frac{n-1}{n}}},
\end{equation}
for any bounded $U\subset\R^n$ we have
$$
|U\cap E| = \lim\limits_{i\to+\infty} |U\cap E_i| \le \liminf\limits_{i\to+\infty} |E_i| \le {c_{\Phi,n}^{-\frac{n}{n-1}}}\liminf\limits_{i\to+\infty} P_\Phi(E_i)^{\frac{n}{n-1}} \le \Big(\frac{C_1}{c_{\Phi,n}\eta}\Big)^{\frac{n}{n-1}}.
$$
Thus, $|E|<+\infty,$ i.e. $E\in \cS.$ Then the $L_\loc^1(\Omega)$-lower semicontinuity of $\sF_{\beta,f}$ implies $E$ is a minimizer.

Notice that if $E_\tau$ is a minimizer, then  as in \eqref{moskav_jangi}
\begin{equation*}
\sC_\beta(E_\tau) +\frac{1}{\tau} \int_{E_\tau \setminus E_0}\d_{E_0}dx + \frac{1}\tau \int_{k\tau}^{(k+1)\tau}ds\int_{E_\tau}f^+( s,x)dx \le C_1,
\end{equation*}
and hence, $f^+\in L^1([k\tau,(k+1)\tau]\times E_\tau).$

\subsubsection{Density estimates for minimizers}

In this section besides \eqref{hyp:1}, we assume \eqref{hyp:2}.

\begin{theorem}\label{teo:density_ests}
Let $E_0\in\cS,$ $\tau>0,$ $k\in\N$ and $E_\tau$ be a minimizer of $\sF_{\beta,f}(\cdot;E_0,\tau,k).$ Let $T>(k+1)\tau.$ Then there exists $\theta\in(0,8^{-n})$ depending only on $n,\Phi$ and $\eta$ (see \ref{eq:coersive_capillar}) such that 
\begin{equation}\label{Linfty_estimos}
\sup_{x\in \cl{E_\tau \Delta E_0}}\,\,\,\d_{E_0}(x) \le \tfrac{\sqrt\tau}{\theta}
\end{equation}
provided  $\theta\sqrt\tau\le\gamma_T.$ Moreover, if $x\in \cl{\p E_\tau}$ and $r\in(0, \theta\sqrt\tau],$ then 
\begin{gather}
\theta \le \frac{|B_r(x)\cap E_\tau|}{|B_r(x)|}  \le 1 - \theta, \label{volume_est}\\[2mm]
\frac{P(E_\tau,B_r(x))}{r^{n-1}} \ge \theta, \label{perimeter_est}
\end{gather}
where  $\gamma_T>0$ is given by \eqref{hyp:2}.
\end{theorem}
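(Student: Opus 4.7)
The plan is to follow the classical De Giorgi--Almgren--Taylor--Wang density-estimate scheme, testing the minimality of $E_\tau$ against the competitors $E_\tau\setminus B_r(x)$ and $E_\tau\cup B_r(x)$ (suitably intersected with $\Omega$ at boundary points). Three ingredients will play the central role: the coercivity bound \eqref{eq:coersive_capillar}, which controls the capillary energy from below by $\eta P_\Phi$; the anisotropic isoperimetric inequality \eqref{isoperimetric_ineq}, together with its relative version in balls and half-balls intersected with $\Omega$; and the forcing absorption from \eqref{hyp:2}, which ensures that whenever $|A|\le\omega_n\gamma_T^n$ the quantity $\int_A|f(t,\cdot)|\,dx$ is dominated by a small multiple of $|A|^{(n-1)/n}$ uniformly in $t\in[0,T]$. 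The assumption $\theta\sqrt\tau\le\gamma_T$ in the statement is exactly what guarantees that \eqref{hyp:2} applies to every ball of radius $\le\theta\sqrt\tau$.

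For \eqref{Linfty_estimos} I would argue by a differential inequality on the tail. Writing $w_+(r):=|(E_\tau\setminus E_0)\cap\{\d_{E_0}>r\}|$, I would compare $E_\tau$ with $F:=E_\tau\setminus\{\d_{E_0}>r\}$. The ATW drop is at least $\tfrac{r}{\tau}w_+(r)$ because $\sd_{E_0}\ge r$ on the removed region; the capillary energy increment is at most $C_\Phi(-w_+'(r))$, coming from the slice $\cH^{n-1}(E_\tau\cap\{\d_{E_0}=r\})$ after using \eqref{eq:coersive_capillar} and \eqref{beta_condio} to absorb the capillary boundary contribution on $\p\Omega$ into the $\Phi$-perimeter; and the forcing difference is controlled by $C\,w_+(r)^{(n-1)/n}$ via \eqref{hyp:2}. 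Together with the isoperimetric inequality \eqref{isoperimetric_ineq} applied to the tail set, these yield an ODE-type inequality of the form
\[
\tfrac{r}{\tau}\,w_+(r)\le C_1(-w_+'(r))+C_2\,w_+(r)^{(n-1)/n},
\]
from which, upon choosing $\theta$ small enough to absorb the last term into the first, a standard iteration argument forces $w_+(r)=0$ for $r>\sqrt\tau/\theta$, with $\theta=\theta(n,\Phi,\eta)\in(0,8^{-n})$. The symmetric competitor $E_\tau\cup(E_0\cap\{\d_{E_0^c}>r\})$ handles the tail $|(E_0\setminus E_\tau)\cap\{\d_{E_0^c}>r\}|$, completing \eqref{Linfty_estimos}.

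For \eqref{volume_est} and \eqref{perimeter_est}, fix $x\in\cl{\p E_\tau}$ and $r\in(0,\theta\sqrt\tau]$, and set $v(r):=|B_r(x)\cap E_\tau|$. Comparing $E_\tau$ with $E_\tau\setminus B_r(x)$, and using \eqref{Linfty_estimos} to get $|\sd_{E_0}|\le\sqrt\tau/\theta+r$ on $B_r(x)\cap(E_\tau\Delta E_0)$, the ATW contribution is bounded by $C\,v(r)/\sqrt\tau\le C\,r^{n-1}$ (using $v(r)\le\omega_n r^n$ and $r\le\theta\sqrt\tau$), and the forcing contribution is $\le C\,v(r)^{(n-1)/n}\le C\,r^{n-1}$ by \eqref{hyp:2}. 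Combined with \eqref{eq:coersive_capillar}, minimality then yields
\[
\eta\,P_\Phi(E_\tau,B_r(x))\le C_\Phi\,v'(r)+C\,r^{n-1}.
\]
Coupling this with the relative isoperimetric inequality $\min\{v(r),\omega_n r^n-v(r)\}^{(n-1)/n}\le C\bigl(P(E_\tau,B_r(x))+v'(r)\bigr)$, and using that $x\in\cl{\p E_\tau}$ rules out $v\equiv 0$ in any right neighbourhood of $0$, a standard integration of $v^{(n-1)/n}\le C'(v'+r^{n-1})$ produces the lower bound $v(r)\ge\theta\omega_n r^n$. The upper bound in \eqref{volume_est} follows symmetrically from the competitor $E_\tau\cup B_r(x)$, and \eqref{perimeter_est} is a byproduct of either the displayed inequality or of the isoperimetric inequality combined with the already-proven volume density.

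The main obstacle is the book-keeping at points $x\in\p\Omega$: the ball $B_r(x)$ has to be replaced by $B_r(x)\cap\Omega$, the slice $v'(r)$ is taken on the spherical cap $\p B_r(x)\cap\Omega$, and the capillary boundary contribution $\int_{\p\Omega}\beta\chi_E\,d\cH^{n-1}$ must be handled in every comparison. The hypothesis $\|\beta\|_\infty\le(1-2\eta)\Phi(\be_n)$ from \eqref{beta_condio} is precisely designed so that this extra term is absorbed into $\eta\,P_\Phi(E_\tau,B_r(x)\cap\Omega)$ with a definite margin, which is what keeps the final constant $\theta$ depending only on $n$, $\Phi$ and $\eta$, as asserted in the statement.
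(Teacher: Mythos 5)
Your proposal diverges from the paper for the $L^\infty$-estimate \eqref{Linfty_estimos} and matches it in spirit for the density estimates, but both parts contain a genuine gap.

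For \eqref{Linfty_estimos}, the paper argues \emph{pointwise}: for $x\in E_\tau\Delta E_0$ with $r_x:=\d_{E_0}(x)$ it compares $E_\tau$ with $E_\tau\setminus B_r(x)$ (resp.\ $E_\tau\cup B_r(x)$) for $r<r_x$, first extracts a lower volume density $|E_\tau\cap B_r(x)|\ge c\,r^n$ by absorbing the forcing via \eqref{hyp:2} into $\sC_\beta(E_\tau\cap B_r)\ge \eta c_{\Phi,n}|E_\tau\cap B_r|^{(n-1)/n}$, and then plugs this back into the ATW term to obtain $r_x\le r+C_2\tau/r$, minimized at $r=\sqrt{C_2\tau}$. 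Your ``tail'' competitor $F=E_\tau\setminus\{\sd_{E_0}>r\}$ and the resulting differential inequality $\tfrac{r}{\tau}w_+\le C_1(-w_+')+C_2w_+^{(n-1)/n}$ do not give the conclusion: the absorption ``choosing $\theta$ small'' is illusory since $\theta$ does not appear in the inequality, and absorbing $C_2w_+^{(n-1)/n}$ into $\tfrac{r}{\tau}w_+$ fails precisely when $w_+$ is small. Even if you instead use the isoperimetric inequality to produce a positive term and arrive at $\tfrac{r}{\tau}w_+ + c\,w_+^{(n-1)/n}\le C(-w_+')$, integrating the second piece gives finite extinction at a radius controlled by $w_+(r_0)^{1/n}$ for some $r_0$, hence by $|E_\tau\setminus E_0|^{1/n}$, which is not $O(\sqrt\tau)$. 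The pointwise ball argument is what delivers the $\sqrt\tau$ scaling, and the tail approach is missing this input.

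For \eqref{volume_est}--\eqref{perimeter_est} your competitor and ingredient list coincide with the paper's, but the displayed inequality $v^{(n-1)/n}\le C'(v'+r^{n-1})$ does not integrate to $v(r)\ge\theta\omega_nr^n$: it is satisfied, for instance, by $v(r)=\epsilon^n r^n$ for every $\epsilon\in(0,1)$, so it cannot produce a universal lower density. The paper avoids this by bounding the ATW and forcing contributions on $B_r(x)$ by $\tfrac12 c_\Phi\eta n\omega_n^{1/n}\,v(r)^{(n-1)/n}$ (which uses both the $L^\infty$-estimate and the specific choice $r\le C_3\sqrt\tau$ with $C_3(C_3+2\sqrt{C_2})=\tfrac{c_\Phi\eta n}{4}$), not merely by $C\,r^{n-1}$, thereby obtaining the pure inequality $v^{(n-1)/n}\le C\,v'$ which integrates to the desired power-law lower bound.
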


In what follows we refer to \eqref{volume_est} and \eqref{perimeter_est} as the uniform density estimates for $E_\tau.$

\begin{proof}
For shortness, we write
$$
h(\cdot):=\int_k^{k+1}f(\tau s,\cdot)ds.
$$
Let us establish \eqref{Linfty_estimos}. For each $x\in E\Delta E_0$ let $p_x\in\cl{ \pOmega E_0 }$ be such that $r_x:=|p_x-x|=\d_{E_0}(x).$ By the $1$-lipschitzianity of $\d_{E_0},$ 
$$
|\d_{E_0}(x)| \le |\d_{E_0}(p_x)| + |x-p_x| = r_x.
$$
Thus, we need to estimate $r_x.$
Fix $r\in (0,r_x)$ and set $B_r:=B_r(x).$ 

Let $x\in E\setminus E_0$ so that 
$
\sd_{E_0} \ge r_x-r
$ 
in $B_r(x).$ Then for a.e. $r\in(0,r_x)$ with $\cH^{n-1}(\p^*E_\tau\cap\p B_r)=0$ summing the equalities
\begin{align*}
&
\begin{aligned}
P_\Phi(E_\tau\setminus B_r, \Omega) - P_\Phi(E_\tau,\Omega) = & \int_{E_\tau \cap \p B_r} \Phi(\nu_{B_r})d\cH^{n-1} - P_\Phi(E_\tau,\Omega\cap B_r)\\
= &2 \int_{E_\tau\cap \p B_r} \Phi(\nu_{B_r})d\cH^{n-1} - P_\Phi(E_\tau\cap B_r,\Omega),
\end{aligned}
\\
& \int_{\p\Omega}\beta \chi_{E_\tau\setminus B_r} d\cH^{n-1} - \int_{\p\Omega}\beta \chi_{E_\tau} d\cH^{n-1} = -\int_{\p\Omega} \beta \chi_{E_\tau\cap B_r} d\cH^{n-1}, \\
& \int_{E_\tau\setminus B_r} \sd_{E_0}dx - \int_{E_\tau} \sd_{E_0}dx = - \int_{E_\tau\cap B_r } \sd_{E_0}dx,\\
& \int_{E_\tau\setminus B_r} hdx - \int_{E_\tau} hdx = - \int_{E_\tau\cap B_r } hdx
\end{align*}
and using the minimimality of $E$ we find 
\begin{multline*}
0\le \sF_{\beta,f}(E_\tau\setminus B_r) - \sF_{\beta,f}(E_\tau) =
2 \int_{E_\tau\cap \p B_r} \Phi(\nu_{B_r})d\cH^{n-1} - P_\Phi(E_\tau\cap B_r,\Omega) \\
-\int_{\p\Omega} \beta \chi_{E_\tau\cap B_r} d\cH^{n-1} - \frac{1}{\tau} \int_{E_\tau\cap B_r } \sd_{E_0}dx
-\int_{E_\tau\cap B_r } hdx.
\end{multline*}
Thus,
\begin{equation}\label{uzs671s}
2 \int_{E_\tau\cap \p B_r} \Phi(\nu_{B_r})d\cH^{n-1} \ge 
\sC_\beta(E_\tau\cap B_r) + 
\frac{r_x-r}{\tau}|E_\tau\cap B_r| +
\int_{E_\tau \cap B_r} hdx.
\end{equation}
By \eqref{eq:coersive_capillar}, the definition of the $\Phi$-perimeter, \eqref{norm_bounds} and the Euclidean isoperimetric inequality
$$
\sC_\beta(E_\tau\cap B_r)  \ge \eta P_\Phi(E_\tau\cap B_r) \ge \eta c_\Phi n\omega_n^{1/n}|E_\tau\cap B_r|^{\frac{n-1}{n}}.
$$
On the other hand, if $r\le \gamma_T,$ then by \eqref{hyp:2}
$$
\int_{E_\tau\cap B_r} |h|dx \le \frac{c_\Phi \eta n\omega_n^{1/n}}{4}\,|E_\tau\cap B_r|^{\frac{n-1}{n}},
$$
and therefore, by \eqref{uzs671s} and \eqref{norm_bounds}
$$
\frac{3c_\Phi \eta n\omega_n^{1/n}}{4}\,|E_\tau\cap B_r|^{\frac{n-1}{n}} \le 2C_\Phi \cH^{n-1}(E_\tau\cap \p B_r).
$$
Integrating this inequality we get 
\begin{equation}\label{lower_mamaos}
|E_\tau\cap B_r| \ge \Big(\frac{3c_\Phi \eta}{8C_\Phi}\Big)^n\omega_nr^n,\quad r\in [0,\gamma_T\wedge r_x].
\end{equation}
Inserting this in \eqref{uzs671s} we get 
$$
\frac{r_x-r}{\tau}\,\Big(\frac{3c_\Phi \eta}{8C_\Phi}\Big)^n\omega_nr^n \le 2C_\Phi n\omega_n r^{n-1},
$$
and therefore, 
\begin{equation}\label{biblioteka}
r_x \le g(r):=r + \frac{C_2\tau}{r},\quad r\in (0,\gamma_T\wedge r_x],
\end{equation}
where 
$$
C_2:=2C_\Phi n \Big(\frac{8C_\Phi}{3c_\Phi \eta}\Big)^n.
$$

On the other hand, if $x\in E_0\setminus E_\tau,$ then using 
\begin{multline*}
0\le \sF_{\beta,f}(E_\tau\cup B_r) - \sF_{\beta,f}(E_\tau) =
2 \int_{E_\tau^c\cap \p B_r} \Phi(\nu_{B_r})d\cH^{n-1} - P_\Phi(E_\tau^c\cap B_r,\Omega) \\
+\int_{\p\Omega} \beta \chi_{E_\tau^c\cap B_r} d\cH^{n-1} - \frac{1}{\tau} \int_{E_\tau^c\cap B_r } \sd_{E_0}dx
-\int_{E_\tau^c\cap B_r } hdx
\end{multline*}
for a.e. $r\in(0,r_x]$ with $r_x:=\dist(x,\p E_0),$ we get 
$$
2 \int_{E_\tau^c \cap \p B_r} \Phi(\nu_{B_r})d\cH^{n-1} \ge 
\sC_{-\beta}(E_\tau^c \cap B_r) + 
\frac{r_x-r}{\tau}|E_\tau^c \cap B_r| +
\int_{E_\tau^c \cap B_r} hdx,
$$
and repeating the above arguments we obtain  
\begin{equation}\label{upper_mamaos}
|E_\tau^c \cap B_r| \ge \Big(\frac{3c_\Phi \eta}{8C_\Phi}\Big)^n\omega_nr^n,\quad r\in [0,\gamma_T\wedge r_x],
\end{equation}
and hence, \eqref{biblioteka} follows.

In the remaining part of the proof we assume that $\sqrt{C_2\tau}\le \gamma_T.$ The function $g$ in \eqref{biblioteka} admits its unique global minimum at $\sqrt{C_2\tau}.$ Thus, if $r_x>\sqrt{C_2\tau},$ then $r_x\le g(\sqrt{C_2\tau}) = 2\sqrt{C_2\tau}.$ Therefore,
$$
\sup_{x\in \cl{E_\tau\Delta E_0}}\,\,\,\d_{E_0}(x) = \sup_{x\in \cl{E_\tau\Delta E_0}}\,\,\,r_x \le 2\sqrt{C_2\tau}.
$$

Now we prove density estimates. Fix any $x\in \p E_\tau,$ $r\in (0,\sqrt{C_2\tau}]$ and let $B_r:=B_r(x).$ First assume that $B_r \cap \cl{\pOmega E_\tau} = \emptyset.$ Then $B_r$ intersects only the flat part of $\p E_\tau,$ and hence, 
$$
\cH^{n-1}(B_r\cap \p E_\tau) = \omega_{n-1}r^{n-1},\quad |B_r\cap E_\tau| = \frac{\omega_n r^n}{2}.
$$
Thus, consider the case $B_r\cap \cl{\pOmega E_\tau}\ne\emptyset$ so that 
\begin{equation}\label{sdahu67}
\sup_{y\in B_r} \d_{E_0}(y) \le r + \sup_{y\in B_r\cap [E\Delta E_0]} \d_{E_0}(y) \le r + 2\sqrt{C_2\tau}.
\end{equation}
By \eqref{hyp:2}
$$
\int_{E_\tau\cap B_r} |h|dx \le \frac{c_\Phi\eta n\omega_n^{1/n}}{4}|E_\tau \cap B_r|^{\frac{n-1}{n}},
\quad
\int_{E_\tau^c\cap B_r} |h|dx \le \frac{c_\Phi\eta n\omega_n^{1/n}}{4}|E_\tau^c\cap B_r|^{\frac{n-1}{n}}.
$$
Moreover, by \eqref{sdahu67}
$$
\int_{E_\tau\cap B_r} \d_{E_0}dx \le (r+2\sqrt{C_2\tau})|E_\tau \cap B_r|^{\frac{n-1}{n}}|E_\tau\cap B_r|^{\frac{1}{n}} \le \omega_n^{1/n}r(r+2\sqrt{C_2\tau})|E_\tau \cap B_r|^{\frac{n-1}{n}}
$$
and 
$$
\int_{E_\tau^c \cap B_r} \d_{E_0}dx \le (r+2\sqrt{C_2\tau})|E_\tau^c \cap B_r|^{\frac{n-1}{n}}|E_\tau^c \cap B_r|^{\frac{1}{n}} \le \omega_n^{1/n}r(r+2\sqrt{C_2\tau})|E_\tau^c \cap B_r|^{\frac{n-1}{n}}.
$$
Thus, if we choose $r\le C_3\sqrt\tau,$ where  $C_3$ satisfies 
$$
C_3(C_3+2\sqrt{C_2}) = \frac{c_\Phi\eta n}{4},
$$
then 
$$
\int_{E_\tau\cap B_r}\Big(\frac{\d_{E_0}}{\tau} + h\Big)dx \le \frac{c_\Phi\eta n\omega_n^{1/n}}{2}|E_\tau\cap B_r|^{\frac{n-1}{n}}
$$
and 
$$
\int_{E_\tau^c\cap B_r}\Big(\frac{\d_{E_0}}{\tau} + h\Big)dx \le \frac{c_\Phi\eta n\omega_n^{1/n}}{2}|E_\tau^c \cap B_r|^{\frac{n-1}{n}}
$$
Thus, as in the proof of \eqref{lower_mamaos} and \eqref{upper_mamaos} we get 
\begin{equation}\label{asndvif76wefgz}
\Big(\frac{c_\Phi \eta}{4C_\Phi}\Big)^n\le 
\frac{|E_\tau\cap B_r|}{|B_r|} \le 1- \Big(\frac{c_\Phi \eta}{4C_\Phi}\Big)^n,\quad r\in (0,C_3\sqrt\tau].
\end{equation}
Finally, \eqref{perimeter_est} follows from \eqref{asndvif76wefgz} and the relative isoperimetric inequality for balls.
\end{proof}

From the lower perimeter density estimate in Theorem \ref{teo:density_ests} and a covering argument we get 

\begin{corollary}

Under assumptions of Theorem \ref{teo:density_ests}, any minimizer $E_\tau$ of $\sF_{\beta.f}$ satisfies 
$$
\cH^{n-1}(\cl{E_\tau} \setminus \Int{E_\tau})< +\infty \qquad\text{and}\qquad 
\cH^{n-1}(\p E_\tau \setminus \p^*E_\tau) = 0.
$$
In particular, $E_\tau$ may be assumed open.
\end{corollary}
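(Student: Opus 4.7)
The plan is to reduce all three claims to the density estimates of Theorem~\ref{teo:density_ests} via a Vitali-type covering argument and a standard density-comparison theorem for Radon measures. I would split the argument into three short steps.

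First, I establish $\cH^{n-1}(\cl{E_\tau}\setminus\Int{E_\tau})=\cH^{n-1}(\p E_\tau)<+\infty$. Set $r_0:=\theta\sqrt\tau$ and fix $\delta\in(0,r_0/5)$. The balls $\{B_\delta(x):x\in \p E_\tau\}$ cover $\p E_\tau$, so by Vitali's $5r$-covering theorem I extract an at most countable disjoint subfamily $\{B_\delta(x_i)\}_{i\in I}$ whose fivefold enlargement still covers $\p E_\tau$. Since $\p E_\tau\subset\cl{\p E_\tau}$, the bound \eqref{perimeter_est} gives $\theta\delta^{n-1}\le P(E_\tau,B_\delta(x_i))$ for every $i\in I$; summing over the disjoint family and using the finiteness of $P(E_\tau)$,
$$
\cH^{n-1}_{10\delta}(\p E_\tau)\;\le\;|I|\,\omega_{n-1}(5\delta)^{n-1}\;\le\;\frac{\omega_{n-1}\,5^{n-1}}{\theta}\,P(E_\tau).
$$
The right-hand side is independent of $\delta$, so letting $\delta\to0^+$ yields the claim.

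Next, I show $\cH^{n-1}(\p E_\tau\setminus \p^*E_\tau)=0$. Let $\mu:=\cH^{n-1}\llcorner \p^*E_\tau$, which is a finite Radon measure coinciding with the Gauss--Green measure $P(E_\tau,\cdot)$ and concentrated on $\p^*E_\tau$. Estimate \eqref{perimeter_est} translates into the uniform lower bound $\Theta^{*,n-1}(\mu,x)\ge \theta/\omega_{n-1}$ at every $x\in \p E_\tau$. Applying the standard density comparison theorem for Radon measures (see e.g.\ Theorem~2.56 of Ambrosio--Fusco--Pallara) to the Borel set $A:=\p E_\tau\setminus \p^*E_\tau$ gives $\cH^{n-1}(A)\le C_n\theta^{-1}\mu(A)$, and since $\mu$ is carried by $\p^*E_\tau$ we have $\mu(A)=0$, hence $\cH^{n-1}(A)=0$. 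Finally, for the open representative: since $E_\tau=E_\tau^{(1)}$ every point of $E_\tau$ has Lebesgue density one, while by \eqref{volume_est} every $x\in \p E_\tau$ has density in $[\theta,1-\theta]$; hence $E_\tau\cap \p E_\tau=\emptyset$, so $E_\tau=\Int{E_\tau}$ is open.

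The first step is routine once the uniform lower perimeter density is in hand. The main technical care goes into the second step, where one must confirm that the constant in the density comparison depends only on $n$ and the universal $\theta$ (so the bound is genuinely uniform in $\tau$ and in the particular minimizer), and that $\p E_\tau\setminus \p^*E_\tau$ is Borel so that the comparison theorem applies to it directly.
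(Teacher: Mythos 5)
Your proof is correct and fills in, with appropriate care, exactly the "lower perimeter density estimate plus covering argument" that the paper invokes without writing out: the Vitali $5r$-cover for $\cH^{n-1}(\p E_\tau)<\infty$, the standard density-comparison theorem for Radon measures (whose proof is itself a Besicovitch covering argument) for $\cH^{n-1}(\p E_\tau\setminus\p^*E_\tau)=0$, and the incompatibility of the two-sided volume density bound \eqref{volume_est} with the density-one normalization $E_\tau=E_\tau^{(1)}$ for openness. This is essentially the paper's intended argument.
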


Another corollary of density estimates is the following analogue of the volume-distance inequality of \cite{ATW:1993}.

\begin{corollary}
Let $E_0\in\cS,$ $\tau>0$ and $k\in\N$ be such that 
\begin{equation}\label{lower_ensity_chetverg}
P(E_0,B_r(x)) \ge \theta r^{n-1},\quad r\in (0,\theta\sqrt\tau],
\end{equation}
for some $\theta,\delta>0.$ Then for any $p>0$ and a minimizer $E_\tau$ of $\sF_{\beta,f}(\cdot;E_0,\tau,k)$ we have 
\begin{equation}\label{volume_inequal}
|E_\tau\Delta E_0| \le \frac{C_4}{p}\,\sC_\beta(E_0)\tau
+ 
\frac{p}{\tau}\int_{E_\tau\Delta E_0}\d_{E_0}dx
\end{equation}
provided $\tau<\theta^2 p^2,$ where 
$$
C_4:= \frac{5^n\omega_n}{c_\Phi \theta \eta}
$$
and $\eta>0$ is given in \eqref{eq:coersive_capillar}.
\end{corollary}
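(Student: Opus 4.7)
The plan is to decompose the symmetric difference into a layer close to $\p E_0$ and a layer far from $\p E_0$, with threshold $\delta := \tau/p$, and bound each layer by a different mechanism. The assumption $\tau < \theta^2 p^2$ is precisely what guarantees $\delta \le \theta\sqrt{\tau}$, placing us in the range of validity of the density hypothesis \eqref{lower_ensity_chetverg}.

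For the far layer I would use Chebyshev's inequality directly: since $\d_{E_0} > \delta$ on $(E_\tau\Delta E_0)\cap\{\d_{E_0}>\delta\}$,
$$|(E_\tau\Delta E_0)\cap\{\d_{E_0}>\delta\}|\le \frac{1}{\delta}\int_{E_\tau\Delta E_0}\d_{E_0}\,dx=\frac{p}{\tau}\int_{E_\tau\Delta E_0}\d_{E_0}\,dx,$$
which already produces the second term of \eqref{volume_inequal} with the correct coefficient.

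For the near layer I would bound $|(E_\tau\Delta E_0)\cap\{\d_{E_0}\le\delta\}|\le|\{\d_{E_0}\le\delta\}|$ and control the tubular neighborhood by a Vitali packing. Every point of $\{\d_{E_0}\le\delta\}$ lies in some $B_\delta(y)$ with $y\in\p E_0$, so the $5r$-covering lemma yields a disjoint family $\{B_\delta(y_i)\}_{i=1}^N$ with $y_i\in\p E_0$ whose fivefold inflations cover $\{\d_{E_0}\le\delta\}$. Using \eqref{lower_ensity_chetverg} at each $y_i$ (valid because $\delta\le\theta\sqrt{\tau}$) together with disjointness,
$$N\theta\delta^{n-1}\le\sum_{i=1}^{N}P(E_0,B_\delta(y_i))\le P(E_0),$$
and therefore $|\{\d_{E_0}\le\delta\}|\le 5^n\omega_n\delta^nN\le\tfrac{5^n\omega_n}{\theta}\,\delta\, P(E_0)$. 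Converting the Euclidean perimeter to capillary energy via \eqref{norm_bounds} and \eqref{eq:coersive_capillar} gives $P(E_0)\le\sC_\beta(E_0)/(c_\Phi\eta)$; substituting $\delta=\tau/p$ then turns the near-layer bound into $\tfrac{C_4}{p}\sC_\beta(E_0)\tau$ with exactly $C_4=5^n\omega_n/(c_\Phi\theta\eta)$, and adding the two layer bounds completes the proof.

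There is no substantial obstacle. The one point requiring care is the interpretation of \eqref{lower_ensity_chetverg}: it must hold at every $x\in\p E_0$, since the centers $y_i$ produced by Vitali's theorem lie on $\p E_0$ but are otherwise uncontrolled. Note that the minimality of $E_\tau$ is not used---the inequality is a purely geometric consequence of the lower perimeter density assumption on $E_0$ combined with the Markov/Vitali splitting.
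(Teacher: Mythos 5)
Your proof is correct and matches the paper's argument essentially step for step: the same splitting of $E_\tau\Delta E_0$ at the threshold $\delta=\tau/p$, the same Chebyshev bound for the far part, and the same Vitali covering combined with the lower perimeter density of $E_0$ for the near part (the paper covers $A=(E_\tau\Delta E_0)\cap\{\d_{E_0}<\delta\}$ directly rather than the whole tube $\{\d_{E_0}\le\delta\}$, but this makes no difference to the constant). Your closing remark is also accurate: minimality of $E_\tau$ plays no role, and the density hypothesis must be read as holding at every point of $\cl{\pOmega E_0}$, which is precisely how the paper applies it.
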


Specific choices of $p$ will be made in the proof of the almost-continuity of flat flows in the next section.

\begin{proof}
Let 
$$
A:=\Big\{x\in E_\tau\Delta E_0:\,\, \d_{E_0}(x)< \frac{\tau}{p}\Big\},
\quad 
B:=\Big\{x\in E_\tau\Delta E_0:\,\, \d_{E_0}(x)\ge  \frac{\tau}{p}\Big\}
$$
By the Chebyshev inequality 
$$
|B| \le \frac{p}{\tau}\int_{E_\tau\Delta E_0}\d_{E_0}dx.
$$
The set $A$ can be covered by balls $\{B_{\tau/p}(x)\}_{x\in\pOmega E_0}.$ By the Vitali covering lemma there exists an at most countable disjoint family $\{B_{\tau/p}(x_i)\}_{i\ge1}$ such that $A$ is still covered by $\{B_{5\tau/p}(x_i)\}_{i\ge1}.$ Then by \eqref{lower_ensity_chetverg} applied with $\tau/p\in (0,\theta\sqrt\tau)$ we have
\begin{align*}
|A| \le \sum_{i\ge1} |B_{5\tau/p}(x_i)| \le \frac{5^n\omega_n \tau}{p}\sum_{i\ge1}\Big( \frac{\tau}{p}\Big)^{n-1} \le \frac{5^n\omega_n \tau}{p\theta} \sum_{i\ge1} P(E,B_{\tau/p}(x_i)) \le 
\frac{5^n\omega_n \tau}{c_\Phi p\theta} P_\Phi(E).
\end{align*}
Now using \eqref{eq:coersive_capillar} and  the equality $|E_\tau\Delta E_0|=|A|+|B|$ we get \eqref{volume_inequal}.
\end{proof}

\subsubsection{Flat flows}
In this section besides \eqref{hyp:1}-\eqref{hyp:2}, we assume \eqref{hyp:f0}. Some further conditions on $f$ will be assumed later.

Notice that under assumption \eqref{hyp:1} for any $\tau>0$ and $E_0\in \cS$ we can define a flat flow $\{E(\tau,k)\}$ starting from $E_0.$ By Theorem \ref{teo:density_ests} $E(\tau,k)$ for $k\ge1$ satisfies the uniform lower perimeter estimates, and thus,   by \eqref{volume_inequal} for any $p>0$ and $1\le m_1<m_2$
\begin{multline}\label{volume_estoes1092}
|E(\tau,m_1)\Delta E(\tau,m_2)| \le\sum_{k=m_1+1}^{m_2} |E(\tau,k)\Delta E(\tau,k+1)| \\
\le 
\frac{C_4}{p} \sum_{k=m_1+1}^{m_2} \sC_\beta(E(\tau,k-1)) \tau + \frac{p}{\tau} \sum_{k=m_1+1}^{m_2} \int_{E(\tau,k-1)\Delta E(\tau,k)} \d_{E(\tau,k-1)}dx
\end{multline}
whenever $\tau<\theta^2p^2.$ Further we will estimate both sums separately.

By the minimality of $E(\tau,k)$ and \eqref{hyp:f0} for $k\ge1,$
\begin{multline}\label{minimal_flats}
\sC_\beta(E(\tau,k)) + \frac{1}{\tau}\int_{k\tau}^{(k+1)\tau}ds \int_{E(\tau,k)} f(s,x)dx + \frac{1}{\tau} \int_{E(\tau,k-1)\Delta E(\tau,k)} \d_{E(\tau,k-1)}dx \\
\le  \sC_\beta(E(\tau,k-1)) + \frac{1}{\tau}\int_{k\tau}^{(k+1)\tau}ds \int_{E(\tau,k-1)} f( s,x)dx.
\end{multline}

To estimate the differences of forcing terms we need some extra regularity conditions on $f.$ Further we fix $T>0$ and let $\tau$ be so small that $T>10\tau$ and $\frac{1}{\tau}\int_0^{2\tau}ds \int_{E_0}|f|dx$ is uniformly bounded (by assumption \eqref{hyp:f0}).  

\subsubsection*{Condition 1: $f$ bounded}

Assume \eqref{hyp:f1}.
Then applying \eqref{volume_inequal} with $p=\frac{1}{2(1+c_T)}$ we get 
\begin{multline*}
\frac{1}{\tau}  \int_{k\tau}^{(k+1)\tau} ds  \Big[\int_{E(\tau,k-1)} f( s,x)dx - \int_{E(\tau,k)} f( s,x)dx\Big]
\le  c_T |E(\tau,k-1)\Delta E(\tau,k)|\\
\le  2C_4c_T \sC_\beta(E(\tau,k-1)) \tau + 
\frac{1}{2\tau}  \int_{E(\tau,k-1)\Delta E(\tau,k)} \d_{E(\tau,k-1)}dx
\end{multline*}
provided $\tau<\theta^2/(2(1+c_T))^2.$ Inserting this estimate in \eqref{minimal_flats} we obtain 
\begin{equation}\label{zaryad_tamom_buldi}
\sC_\beta(E(\tau,k)) + \frac{1}{2\tau}\int_{E(\tau,k-1)\Delta E(\tau,k)} \d_{E(\tau,k-1)}dx 
\le (1+2C_4c_T\tau) \sC_\beta(E(\tau,k-1)).
\end{equation}
By induction 
$$
\sC_\beta(E(\tau,k)) \le (1+2C_4c_T\tau)^{k-1} \sC_\beta(E(\tau,1)),\quad k\ge1.
$$
Assuming $k<\intpart{T/\tau}$ and using elementary inequality 
$$
(1 + 2C_4c_T\tau)^{\intpart{T/\tau}-1} = \Big((1 + 2C_4c_T\tau)^{\frac{1}{2C_4c_T\tau}}\Big)^{2C_4c_T\tau(\intpart{T/\tau}-1)}  \le e^{2C_4c_TT}.
$$
we deduce 
\begin{equation}\label{gako_ubil_on}
\sC_\beta(E(\tau,k)) \le e^{2C_4c_TT}\sC_\beta(E(\tau,1)),\quad k=1,\ldots,\intpart{T/\tau}-1. 
\end{equation}
Moreover, given $1<m_1<m_2<\intpart{T/\tau}$, summing \eqref{zaryad_tamom_buldi}
in $k=m_1+1,\ldots,m_2$ we get 
\begin{align}
\frac{1}{2\tau}\sum_{k=m_1+1}^{m_2}   & \int_{E(\tau,k-1)\Delta E(\tau,k)} \d_{E(\tau,k-1)}dx \nonumber \\
\le &  
\sC_\beta(E(\tau,m_1))-\sC_\beta(E(\tau,m_2)) + 2C_4c_T\sum_{k=m_1+1}^{m_2}\sC_\beta(E(\tau,k-1))\tau \nonumber \\
\le & e^{2C_4c_TT}\sC_\beta(E(\tau,1)) + 2C_4c_T e^{2C_4c_TT} \sC_\beta(E(\tau,1)) (m_2-m_1)\tau, \label{allo_kimbu}
\end{align}
where in the last inequality we used \eqref{gako_ubil_on}.

Next fix $0<s<t<T$ and let $\tau>0$ be so small that $s>10\tau$ and $t-s>10\tau.$ Applying \eqref{volume_estoes1092} with $m_1=\intpart{s/\tau},$  $m_2=\intpart{t/\tau}$ and $p=|t-s|^{1/2},$ and using \eqref{gako_ubil_on} and \eqref{allo_kimbu} we get 
\begin{multline*}|E(\tau,\intpart{s/\tau})\Delta E(\tau,\intpart{t/\tau})| \le  \frac{C_4e^{2C_4c_TT} \sC_\beta(E(\tau,1))  (t-s+\tau)}{|t-s|^{1/2}} \\
+ 2\Big(e^{2C_4c_TT}\sC_\beta(E(\tau,1)) + 2C_4c_T e^{2C_4c_TT}\sC_\beta(E(\tau,1))  (t-s+\tau)\Big)|t-s|^{1/2},
\end{multline*}
and therefore,
\begin{equation}\label{holderian00}
|E(\tau,\intpart{s/\tau})\Delta E(\tau,\intpart{t/\tau})| \le
C_5 \sC_\beta(E(\tau,1)) \Big(|t-s|^{1/2}+|t-s|^{3/2} + \tfrac{\tau(|t-s|+1)}{|t-s|^{1/2}}\Big),
\end{equation}
where 
$$
C_5:=(C_4 + 2 + 4C_4c_TT) e^{2C_4c_TT}.
$$
It remains to estimate $\sC_\beta(E(\tau,1))$ uniform in $\tau.$ Applying  \eqref{minimal_flats} with $k=1$ we get 
\begin{align*}
\sC_\beta(E(\tau,1)) \le & \sC_\beta(E_0) + \frac{1}{\tau}\int_\tau^{2\tau} ds \int_{E_0}f(s,x)dx - \frac{1}{\tau}\int_{\tau}^{2\tau} ds\int_{E_\tau}f(s,x)dx\\
\le & \sC_\beta(E_0) + \frac{2}{\tau}\int_0^{2\tau} ds\int_{\R^n}f(s,x)dx := c_\tau,
\end{align*}
where by assumption \eqref{hyp:f1} $c_\tau$ is uniformly bounded as $\tau\to0^+.$ Owing this and \eqref{holderian00}, and repeating the standard arguments in the existence of GMM (see e.g. \cite{BKh:2018}) we conclude  $GMM(\sF_{\beta,f},E_0)\ne\emptyset$ and each GMM is locally $1/2$-H\"older continuous in time.

\subsubsection*{Condition 2: $f$ locally time-Lipschitz}

Assume \eqref{hyp:f2} and set
$$
\sigma_k:=\sC_\beta(E(\tau,k))+ \frac{1}{\tau}\int_{k\tau}^{(k+1)\tau}ds \int_{E(\tau,k)} f(s,x)dx,\quad k\ge0.
$$
By \eqref{hyp:f2} for all $1\le k\le \intpart{T/\tau}-1$ we have
\begin{multline*}
\int_{k\tau}^{(k+1)\tau}ds \int_{E(\tau,k)} f(s,x)dx - \int_{(k-1)\tau}^{k\tau}ds \int_{E(\tau,k-1)} f(s,x)dx\\
\le \int_{k\tau}^{(k+1)\tau}ds \int_{E(\tau,k-1)} |f(s,x) - f(s+\tau,x)|dx \le c_T\tau^2.
\end{multline*}
Therefore, by \eqref{minimal_flats} 
$$
\sigma_k + \frac{1}{\tau}\int_{E(\tau,k-1)\Delta E(\tau,k)} \d_{E(\tau,i-1)}dx \le \sigma_{k-1} + c_T\tau
$$
and summing these inequalities
\begin{equation*}
\sigma_{k} + \frac{1}{\tau}\sum_{i=1}^k \int_{E(\tau,i-1)\Delta E(\tau,i)} \d_{E(\tau,i-1)}dx  \le \sigma_0 + c_T k\tau .
\end{equation*}
Let us rewrite this inequality as
\begin{multline}\label{further_estimate9i}
\sC_\beta(E(\tau,k)) + \frac{1}{\tau}\sum_{i=1}^k \int_{E(\tau,i-1)\Delta E(\tau,i)} \d_{E(\tau,i-1)}dx 
\\ 
\le \sigma_0 + c_T k\tau - \frac{1}{\tau}\int_{k\tau}^{(k+1)\tau}ds \int_{E(\tau,k)}f(s,x)dx.
\end{multline}
By \eqref{hyp:f2}
\begin{multline*}
\Big|\frac{1}{\tau}\int_{k\tau}^{(k+1)\tau}ds \int_{E(\tau,k)}f(s,x)dx-\frac{1}{\tau}\int_{0}^{\tau}ds \int_{E(\tau,k)}f(s,x)dx \Big|\\
\le 
\sum_{i=1}^k \int_i^{i+1}ds \int_{\R^n}|f(s\tau,x)-f(s\tau+\tau,x)|\le c_Tk\tau.
\end{multline*}
Inserting this in \eqref{further_estimate9i}, for any  $k<\intpart{T/\tau}$ we get 
\begin{multline}\label{furo0o}
\sC_\beta(E(\tau,k)) + \frac{1}{\tau}\sum_{i=1}^k \int_{E(\tau,i-1)\Delta E(\tau,i)} \d_{E(\tau,i-1)}dx 
\\ 
\le \sC_\beta(E_0) + 2c_T T + \frac{2}{\tau}\int_{0}^{2\tau}ds \int_{\R^n}|f(s,x)|dx=:c_\tau',
\end{multline}
where $c_\tau'$ is uniformly bounded for small $\tau.$
Now take any $0<s<t<T$ and let $\tau$ be so small that $t-s>10\tau$ and $s>10\tau.$ Applying \eqref{volume_estoes1092} with $m_1:=\intpart{s/\tau},$  $m_2:=\intpart{t/\tau}$ and $p=|t-s|^{1/2},$ and employing \eqref{furo0o} we obtain 
$$
|E(\tau,\intpart{s/\tau})\Delta E(\tau,\intpart{t/\tau})| \le (C_4+1)c_\tau'\Big(|t-s|^{1/2}+ \tfrac{\tau}{|t-s|^{1/2}}
\Big).
$$
This implies $GMM(\sF_{\beta,f},E_0)\ne\emptyset$ for any $E_0\in \cS$ and each GMM is $1/2$-H\"older continuous in time.

\subsection{Uniform boundedness of GMM}\label{subsec:unifbounded_gmm}

In this section we obtain $L^\infty$-bounds for GMM  starting from a bounded set $E_0$, assuming the growth condition \eqref{hyp:fb2}.

Recall that in the literature (see e.g. \cite{ATW:1993,ChDgM:2023,MSS:2016}) without boundary conditions  the following comparison can be established: \it if $F_0\subset W_{r_0}^\Phi$ and $F_\tau$ of the standard Almgren-Taylor-Wang functional with forcing $f$ satisfies $F_\tau\subset W_{r_\tau}^\Phi,$ then: 
\begin{itemize}
 \item if $f\equiv0,$ then (by truncation with convex sets \cite{ATW:1993,LS:1995}) $r_\tau=r_0,$
 
 \item if $f\ne0,$ then (by trucantion with balls \cite{ChDgM:2023})
$$
\frac{r_\tau - r_0}{\tau} \le c +\frac{c}{r_\tau}
$$
for some constant $c>0.$  
\end{itemize}
\rm
Below we establish similar comparison principle, but due to the  boundary term, we cannot apply Wulff shapes. Rather, we use Winterbottom shapes \cite{Kholmatov:2024,KSch:2024,Maggi:2012}: given a constant $\beta_0\in (-\Phi(\be_n),\Phi(\be_n)),$ the part of the Wulff shape $W_{\beta_0,R}:=\Omega\cap W_R^\Phi(\beta_0R\be_n)$ centered at $\beta_0R\be_n$ of radius $R,$ the so-called \emph{Winterbottom shape}, satisfies 
\begin{equation}\label{winterbot_ineq9i8}
\sC_{\beta_0}(E) \ge c_{\Phi,\beta_0,n}\,|E|^{\frac{n-1}{n}},\quad c_{\Phi,\beta_0,n}:=\tfrac{\sC_{\beta_0}(W_{\beta_0,R})}{|W_{\beta_0,R}|^{\frac{n-1}{n}}},
\end{equation}
for all $E\in \cS.$ Note that the isoperimetric constant $c_{\Phi,\beta_0,n}$ independent of $R$ and horizontal translations of the Wuinterbottom shape.
Recall that without forcing, in \cite{BKh:2018} we used a sort of ``mean convex'' sets (for capillary functional) to bound uniformly the minimizers of $\sF_\beta.$

\begin{lemma}\label{lem:winterbottom}

Let $\tau>0,$ $k\in\N, $ $F_0\in\cS$ be a bounded set and $F_\tau$ be a minimizer of $\sF_{\beta,f}(\cdot;F_0,\tau,k),$ which is bounded in view of \eqref{Linfty_estimos}. Let $-\Phi(\be_n) < \beta_0 < -(1-2\eta)\Phi(\be_n)$ be a constant and let $W_{\beta_0,r_0}$ contain $F_0$ and $W_{\beta_0,r_\tau}$  be the smallest Winterbottom shape containing $F_\tau.$ Then either $r_\tau\le r_0,$ where $R_0$ is given in \eqref{hyp:fb2}, or $r_\tau>r_0$ and
$$
r_\tau \le (1 + C_6\tau)r_0 + C_7\tau
$$
for some $C_6,C_7$ depending only on $\beta_0$ and $T,$ and for all $\tau<\frac{1}{4C_6}.$
\end{lemma}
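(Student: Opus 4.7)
The approach I would take is to compare $F_\tau$ with its truncation $F_\tau\cap W_{\beta_0,r}$ by a Winterbottom shape for $r\in[r_0,r_\tau]$, extending the convex-set truncation technique of \cite{ATW:1993,ChDgM:2023} to the capillary setting by using Winterbottom shapes in place of balls or Wulff shapes, as their free boundaries satisfy the anisotropic Young's law with coefficient $\beta_0$ (making cutting along $\pOmega W_{\beta_0,r}$ ``energetically neutral'' for $\sC_{\beta_0}$). Setting $A_r:=F_\tau\setminus W_{\beta_0,r}$ and $V(r):=|A_r|$, the minimality of $F_\tau$ combined with the standard disjoint-union decomposition of the capillary functional along the shared boundary yields
\[
\sC_\beta(A_r)+\tfrac{1}{\tau}\int_{A_r}\sd_{F_0}\,dx+\tfrac{1}{\tau}\int_{k\tau}^{(k+1)\tau}\!\!\int_{A_r}f\,dx\,ds\le 2\int_{F_\tau\cap\pOmega W_{\beta_0,r}}\Phi(\nu_{W_{\beta_0,r}})\,d\cH^{n-1}.
\]

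Next I would bound each ingredient. Since $\beta\ge\beta_0$ (because $\|\beta\|_\infty\le(1-2\eta)\Phi(\be_n)<|\beta_0|$), the Winterbottom isoperimetric inequality \eqref{winterbot_ineq9i8} gives $\sC_\beta(A_r)\ge c_{\Phi,\beta_0,n}V(r)^{(n-1)/n}$. The inclusion $F_0\subset W_{\beta_0,r_0}$ combined with the triangle inequality for $\Phi^o$ and the support-function bound for Wulff shapes produces $\sd_{F_0}(x)\ge\kappa(r-r_0)$ on $A_r$, with $\kappa:=(1-|\beta_0|\Phi^o(\be_n))/L>0$ depending only on $\Phi$ and $\beta_0$. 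Using \eqref{hyp:fb2} together with $F_\tau\subset W_{\beta_0,r_\tau}$, which forces $|x|\le Cr_\tau$ on $A_r$, one obtains $-\tfrac{1}{\tau}\int_{k\tau}^{(k+1)\tau}\!\int_{A_r}f\,dx\,ds\le C_T(1+r_\tau)V(r)$ provided $r$ is large enough that $W_{\beta_0,r}\supset\Omega\cap B_{R_T}$. Finally, introducing the $1$-homogeneous level function $\psi$ of the Winterbottom family (defined by $\{\psi\le r\}\cap\Omega=W_{\beta_0,r}$) and exploiting the $0$-homogeneity of $\nabla\psi$, the coarea formula yields the surface bound $\int_{F_\tau\cap\pOmega W_{\beta_0,r}}\Phi(\nu_W)\,d\cH^{n-1}\le K(-V'(r))$ for some $K=K(\Phi,\beta_0)$. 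Combining everything gives the ODI
\[
c_{\Phi,\beta_0,n}V(r)^{(n-1)/n}+\tfrac{\kappa(r-r_0)-C_T(1+r_\tau)\tau}{\tau}\,V(r)\le 2K(-V'(r)).
\]

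To extract the claimed linear-in-$\tau$ bound I would invoke a touching-principle argument: since $W_{\beta_0,r_\tau}$ is the smallest containing Winterbottom shape, there exists a contact point $x_0\in\pOmega F_\tau\cap\pOmega W_{\beta_0,r_\tau}$ at which the Winterbottom acts as an outer barrier, so $\kappa^\Phi_{F_\tau}(x_0)\ge\kappa^\Phi_{W_{\beta_0,r_\tau}}(x_0)>0$; together with the Euler--Lagrange identity for $F_\tau$, namely $\kappa^\Phi_{F_\tau}(x_0)+\bar f(x_0)+\sd_{F_0}(x_0)/\tau=0$ (valid where $\pOmega F_\tau$ is $C^2$ up to the boundary, by the capillary regularity of \cite{DPhM:2015,DPhM:2017}), and the forcing bound $|\bar f(x_0)|\le C_T(1+r_\tau)$, this gives $\sd_{F_0}(x_0)\le C_T(1+r_\tau)\tau$. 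Comparing with the lower bound $\sd_{F_0}(x_0)\ge\kappa(r_\tau-r_0)$ yields the implicit inequality $\kappa(r_\tau-r_0)\le C_T(1+r_\tau)\tau$, which for $\tau<1/(4C_6)$ rearranges to the stated $r_\tau\le(1+C_6\tau)r_0+C_7\tau$ with $C_6,C_7$ depending only on $\beta_0$ and $T$.

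The main obstacle I foresee is rigorously justifying the touching comparison at $x_0$ when $\pOmega F_\tau$ may be singular at the contact point: in the dimensional regime where the regularity results of \cite{DPhM:2015,DPhM:2017} apply, the identity holds classically, whereas in general dimensions one must work directly from the integrated ODI above, handling the isoperimetric term together with the density estimates of Theorem~\ref{teo:density_ests} to control $V$ near $r_\tau$; a secondary subtlety is the positivity of $\kappa$, which rests on the mild monotonicity $|\beta_0|\Phi^o(\be_n)\le 1$ of the Winterbottom family.
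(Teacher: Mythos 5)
Your setup is on the right track and matches the paper's strategy in spirit: compare $F_\tau$ with its truncation by a Winterbottom shape $W_{\beta_0,r}$, and your energy decomposition at a fixed slicing radius $r$ is a correct consequence of minimality. The difficulty is in how you try to close the argument; both routes you describe have gaps that the paper's proof is specifically designed to avoid.

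The touching-principle argument needs the Euler--Lagrange identity for $F_\tau$ to hold classically at the contact point $x_0\in\cl{\pOmega F_\tau}\cap\cl{\pOmega W_{\beta_0,r_\tau}}$. You flag this yourself, and it is a genuine gap, not a technicality: this lemma feeds the uniform boundedness of GMM in Theorem \ref{teo:intro_existGMM}, which carries no dimensional restriction, while the full $C^2$-regularity of $\cl{\pOmega F_\tau}$ from \cite{DPhM:2015,DPhM:2017} holds only in low dimensions, and, as the introduction of the paper observes, a ``no singular minimizing cone touching a smooth barrier'' statement for the capillary functional $\sC_\beta$ is not currently known. The fallback ODI does not close either as stated: the term $-V'(r)$ must be eliminated, not integrated — from $c\,V^{(n-1)/n}\le 2K(-V')$ one gets $r_\tau-r^*\lesssim V(r^*)^{1/n}$, which is not $O(\tau)$ without separate information on $V(r^*)$, and sending $r\to r_\tau^-$ makes the left-hand side vanish but does not control the surface term.

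What is missing is the single observation that kills the surface term outright. For any Winterbottom shape $W:=W_{\beta_0,r}$, submodularity of $\sC_\beta$, the inequality $\beta\ge\beta_0$ $\cH^{n-1}$-a.e.\ (which follows from $|\beta_0|>\|\beta\|_\infty$, a consequence of the hypothesis on $\beta_0$), and the Winterbottom isoperimetric inequality \eqref{winterbot_ineq9i8} give
$$
\sC_\beta(W\cap F_\tau)-\sC_\beta(F_\tau)
\le \sC_{\beta_0}(W)-\sC_{\beta_0}(W\cup F_\tau)
\le c_{\Phi,\beta_0,n}\Big(|W|^{\frac{n-1}{n}}-|W\cup F_\tau|^{\frac{n-1}{n}}\Big)\le 0,
$$
which in your notation is precisely $2\int_{F_\tau\cap\pOmega W}\Phi(\nu_W)\,d\cH^{n-1}\le \sC_\beta(A_r)$. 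Inserting this into your display makes both the boundary term and the isoperimetric term drop, leaving only $\tfrac{1}{\tau}\int_{A_r}\sd_{F_0}\,dx+\tfrac{1}{\tau}\iint_{A_r}f\,dx\,ds\le 0$. Now take $r=r_\tau-\epsilon$ (so that $|A_r|>0$ but $|A_r|\searrow 0$), bound $\sd_{F_0}\ge c(r_\tau-r_0-\epsilon)$ and $|f|\le a_T+b_T r_\tau$ on $A_r$ as you already do, divide by $|A_r|$, and send $\epsilon\to 0^+$ to get $\tfrac{c(r_\tau-r_0)}{\tau}\le a_T+b_T r_\tau$, hence the claimed linear recurrence. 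This is the paper's argument: no regularity of $\pOmega F_\tau$, no ODI, no touching point, valid in all dimensions. Your surface-term/coarea decomposition is the wrong split exactly because it forces a derivative $-V'$ into the picture that then needs to be re-absorbed; keeping the perimeter difference together and comparing it to the Winterbottom isoperimetric profile avoids this entirely.
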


\begin{figure}[htp!]
\includegraphics[width=0.6\textwidth]{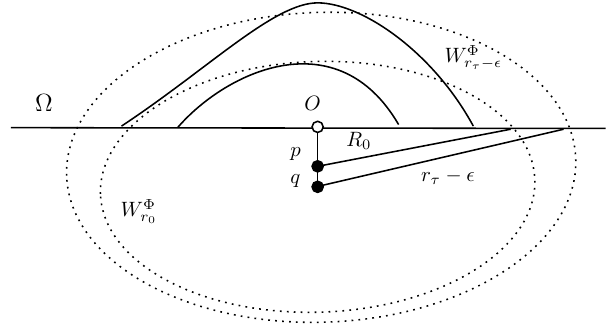}
\caption{\small Comparison with Winterbottom shapes.}\label{fig:winterbottom}
\end{figure}

\begin{proof}
Note that for each $r>0$ there exists a unique Winterbottom shape $W_{\beta_0,r}$ whose center lies on the vertical line passing through the origin (see Figure \ref{fig:winterbottom}), and  $W_{\beta_0,r'}\subset W_{\beta_0,r''}$ whenever $r'<r''.$ Therefore, if $r_\tau\le r_0$ we are done. Otherwise fix $\epsilon\in(0,r_\tau-r_0)$ and consider the Winterbottom shape $W_{\beta_0,r_\tau-\epsilon}.$ By the minimality of $r_\tau,$ 
$$
|F_\tau\setminus W_{\beta_0,r_\tau-\epsilon}|\searrow0\quad  \text{as $\epsilon\to0^+.$}
$$ 
Let us estimate 
\begin{align*}
0 \le & \sF_{\beta,f}(W_{\beta_0,r_\tau-\epsilon}\cap F_\tau;F_0,\tau,k) -  \sF_{\beta,f}(F_\tau;F_0,\tau,k) \\
= & \sC_\beta(W_{\beta_0,r_\tau-\epsilon}\cap F_\tau) - \sC_\beta(F_\tau) - \frac{1}{\tau}\int_{F_\tau\setminus W_{\beta_0,r_\tau-\epsilon}} \sd_{F_0}dx - \frac{1}{\tau} \int_{k\tau}^{(k+1)\tau}ds \int_{F_\tau\setminus W_{\beta_0,r_\tau-\epsilon}}fdx
\\
= :& I_1 - I_2 - I_3.
\end{align*}
Since 
$$
\sd_{E_0}=\d_{E_0}\ge c_\Phi\d_{E_0}^{\Phi^o} \ge c_\Phi\beta_0 (r_\tau-r_0-\epsilon)\quad \text{in $F_\tau\setminus W_{\beta_0,r_\tau-\epsilon},$}
$$
and, recalling $r_\tau>R_0,$ by \eqref{hyp:fb2} and \eqref{norm_bounds}
$$
|f(s,x)|\le a_T+b_T|x| \le a_T + b_T(|x+\beta_0 (r_\tau-\epsilon) \be_n|+|\beta_0 (r_\tau-\epsilon) \be_n|) \le a_T +\Big(b_T\beta_0+\tfrac{b_T}{c_\Phi}\Big)(r_\tau-\epsilon)
$$
for any $s\in [k\tau,(k+1)\tau]$ and $x\in F_\tau\setminus W_{\beta_0,r_\tau-\epsilon}.$ Therefore, 
$$
I_2\ge \frac{c_\Phi\beta_0(r_\tau-r_0-\epsilon)}{\tau}|F_\tau\setminus W_{\beta_0,r_\tau-\epsilon}|
\quad\text{and}\quad 
|I_3|\le (a_T+c_Tr_\tau)|F_\tau\setminus W_{\beta_0,r_\tau-\epsilon}|.
$$
Moreover, for a.e. $\epsilon$ using \eqref{winterbot_ineq9i8} we get 
\begin{align*}
I_1 = & \sC_{\beta_0}(W_{\beta_0,r_\tau-\epsilon}) - \sC_{\beta_0}(F_\tau\cup W_{\beta_0,r_\tau-\epsilon}) + \int_{\p\Omega} (\beta_0-\beta)\chi_{F_\tau\cup W_{\beta_0,r_\tau-\epsilon}}d\cH^{n-1}\\
\le & c_{\Phi,\beta_0,n} \Big(|W_{\beta_0,r_\tau-\epsilon}|^{\frac{n-1}{n}} - |F_\tau\cup W_{\beta_0,r_\tau-\epsilon}|^{\frac{n-1}{n}}\Big)\le0.
\end{align*}
Since $I_2\le I_1 + |I_3|,$ from these estimates we deduce 
$$
\frac{r_\tau - r_0 -\epsilon}{\tau} \le \frac{a_T+b_Tr_\tau}{c_\Phi\beta_0}.
$$
Thus, letting $\epsilon\to0^+$ we get 
$$
r_\tau \le \frac{r_0}{1 - \frac{b_T}{c_\Phi\beta_0}\tau} + \frac{a_T\tau}{c_\Phi\beta_0(1-\frac{b_T}{c_\Phi\beta_0}\tau)}
$$
provided $b_T\tau<c_\Phi\beta_0$. This implies the thesis with suitable $C_6$ and $C_7$ depending only on $a_T,$ $b_T$ and $\beta_0.$
\end{proof}

Now consider any flat flow $\{E(\tau,k)\}$ starting from $E_0$ and let $W_{\beta_0,r(\tau,k)}$ be Winterbottom shapes such that contaning $E(\tau,k)$ such that $k\mapsto r(\tau,k)$ is nondecreasing and $r(\tau,0)=r_0>R_0$. By Lemma \ref{lem:winterbottom} for each $k\ge1$ we may assume either $r(\tau,k)= r(\tau,k-1)$ or 
$$
r(\tau,k-1)<r(\tau,k) < (1+C_6\tau)r(\tau,k-1)+C_7\tau.
$$  
Then applying induction argument we find 
$$
r(\tau,k) \le (1+C_6\tau)^kr_0 + C_7\tau \frac{(1+C_6\tau)^k-1}{(1+C_6\tau)-1}<\Big(r_0 + \tfrac{C_7}{C_6}\Big)(1+C_6\tau)^k.
$$
Thus, if $k\le \intpart{T/\tau},$ then 
$$
(1+C_6\tau)^k\le \Big((1+C_6\tau)^{\frac{1}{C_6\tau}}\Big)^{C_6\tau \intpart{T/\tau}} \le e^{C_6T}.
$$
Therefore, for every $E(\cdot)\in GMM(\sF_{\beta,f},E_0)$ and $t\in [0,T)$ we get $E(t)\subset W_{\beta_0,e^{C_6T}}.$

\section{Some comparison principles}\label{sec:comparison}

In this section we establish some comparison principles as in \cite[Section 6]{BKh:2018}.

\subsection{Discrete comparison and comparison of GMMs}

We start this section with 

\begin{theorem}[\textbf{Discrete comparison principle}]\label{teo:compare_minima}
Let $\tau>0,$ $k\in\N,$ $\beta_i$ satisfy \eqref{beta_condio} and $f_i$ satisfy \eqref{hyp:1}, and $E_0^{(i)} \in \cS,$ $i=1,2.$

\begin{itemize}
\item[\rm(a)] Assume that $\beta_1\ge \beta_2$ $\cH^{n-1}$-a.e. on $\p\Omega,$ $E_0^{(1)}\subset E_0^{(2)}$ and $f_1 > f_2$ a.e. in $\R_0^+\times \Omega.$ Then for any minimizer $E_\tau^{(i)}$ of $\sF_{\beta_i,f_i}(\cdot;E_0^{(i)},\tau,k)$ 
$$
E_\tau^{(1)}\subset  E_\tau^{(2)}.
$$

\item[\rm(b)] Assume that $\beta_1\ge \beta_2$ $\cH^{n-1}$-a.e. on $\p\Omega,$ $E_0^{(1)}\prec E_0^{(2)}$ and $f_1 \ge f_2$ a.e. in $\R_0^+\times \Omega.$ Then for any minimizer $E_\tau^{(i)}$ of $\sF_{\beta_i,f_i}(\cdot;E_0^{(i)},\tau,k)$ 
$$
E_\tau^{(1)}\subset  E_\tau^{(2)}.
$$

\item[\rm(c)] Assume that $\beta_1\ge \beta_2$ $\cH^{n-1}$-a.e. on $\p\Omega,$ $E_0^{(1)}\subset E_0^{(2)}$ and $f_1 \ge f_2$ a.e. in $\R_0^+\times \Omega.$ Then there exist  minimizers $E_{\tau*}^{(1)}$ of $\sF_{\beta_1,f_1}(\cdot;E_0^{(1)},\tau,k)$ and $E_{\tau}^{(2)*}$ of $\sF_{\beta_2,f_2}(\cdot;E_0^{(2)},\tau,k)$  such that 
$$
E_{\tau*}^{(1)}\subset  E_\tau^{(2)}\quad\text{and}\quad E_\tau^{(1)}\subset  E_\tau^{(2)*}.
$$

\item[\rm (d)] If $\beta_1=\beta_2=:\beta,$ $f_1=f_2=:f$ and $E_0^{(1)}=E_0^{(2)}=:E_0,$ then there exist minimizers  $E_{\tau*}$ and $E_\tau^*$ of $\sF_{\beta,f}(\cdot;E_0,\tau,k)$ such that every minimizer $E_\tau$ satisfies
$$
E_{\tau*}\subset E_\tau\subset E_\tau^*.
$$
\end{itemize}
\end{theorem}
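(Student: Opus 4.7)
The plan is a standard lattice/submodularity argument. For parts (a) and (b), given minimizers $E_\tau^{(1)}, E_\tau^{(2)}$ of the respective functionals, I set $A:=E_\tau^{(1)}\cap E_\tau^{(2)}$ and $B:=E_\tau^{(1)}\cup E_\tau^{(2)}$ and aim for the submodular bound
\begin{equation*}
\sF_{\beta_1,f_1}(A;E_0^{(1)},\tau,k) + \sF_{\beta_2,f_2}(B;E_0^{(2)},\tau,k) \le \sF_{\beta_1,f_1}(E_\tau^{(1)};E_0^{(1)},\tau,k) + \sF_{\beta_2,f_2}(E_\tau^{(2)};E_0^{(2)},\tau,k).
\end{equation*}
The $\Phi$-perimeter piece is handled by the lattice inequality $P_\Phi(A,\Omega)+P_\Phi(B,\Omega)\le P_\Phi(E_\tau^{(1)},\Omega)+P_\Phi(E_\tau^{(2)},\Omega)$, while the remaining terms are treated via the pointwise identity
\begin{equation*}
\chi_A\,\alpha + \chi_B\,\gamma - \chi_{E_\tau^{(1)}}\alpha - \chi_{E_\tau^{(2)}}\gamma = (\gamma-\alpha)\,\chi_{E_\tau^{(1)}\setminus E_\tau^{(2)}},
\end{equation*}
applied with $(\alpha,\gamma)=(\beta_1,\beta_2)$ for the boundary integral, $(\sd_{E_0^{(1)}},\sd_{E_0^{(2)}})$ for the distance term, and $(f_1,f_2)$ for the forcing term. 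The hypotheses $\beta_1\ge\beta_2$, $E_0^{(1)}\subset E_0^{(2)}$ (implying $\sd_{E_0^{(1)}}\ge\sd_{E_0^{(2)}}$ by \eqref{compare_trunc_sdist}), and $f_1\ge f_2$ then make each contribution on $E_\tau^{(1)}\setminus E_\tau^{(2)}$ nonpositive.

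Combining the submodular bound with the minimalities $\sF_{\beta_1,f_1}(E_\tau^{(1)};\cdot)\le\sF_{\beta_1,f_1}(A;\cdot)$ and $\sF_{\beta_2,f_2}(E_\tau^{(2)};\cdot)\le\sF_{\beta_2,f_2}(B;\cdot)$ forces every inequality to be an equality. To deduce $|E_\tau^{(1)}\setminus E_\tau^{(2)}|=0$ I need at least one of the pointwise comparisons to be strict on $E_\tau^{(1)}\setminus E_\tau^{(2)}$. For (a) the strict assumption $f_1>f_2$ supplies this directly. For (b) the relation $E_0^{(1)}\prec E_0^{(2)}$ yields a uniform gap $\sd_{E_0^{(1)}}-\sd_{E_0^{(2)}}\ge \dist(\pOmega E_0^{(1)},\pOmega E_0^{(2)})>0$, which is obtained by a short triangle-inequality computation on any segment joining an interior point of $E_0^{(1)}$ to a nearest boundary point of the other set (such a segment must cross both boundaries).

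For parts (c) and (d) I would argue by approximation and a lattice-selection argument. In (c), to produce $E_{\tau*}^{(1)}\subset E_\tau^{(2)}$ for a given minimizer $E_\tau^{(2)}$, I would perturb $f_1$ to $f_1+\epsilon$ to enter the strict regime of (a), obtaining minimizers $E_{\tau,\epsilon}^{(1)}\subset E_\tau^{(2)}$, and then pass to $\epsilon\to 0^+$ using the $L_\loc^1$-precompactness from the uniform perimeter bounds in Theorem \ref{teo:density_ests} together with the $L_\loc^1$-lower semicontinuity of $\sF_{\beta_1,f_1}$ to conclude that the limit $E_{\tau*}^{(1)}$ is itself a minimizer preserving the inclusion; the companion statement is symmetric. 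For (d), with all parameters equal the submodular bound becomes a chain of equalities whenever $E_\tau^{(1)},E_\tau^{(2)}$ are minimizers, so $A$ and $B$ are minimizers too, and hence the set of minimizers is a closed sublattice of $\cS$ (closedness following from the same lower-semicontinuity). Choosing a countable $L^1$-dense sequence $\{E_n\}$ of minimizers and setting $F_N:=\bigcap_{n\le N}E_n$, each $F_N$ is a minimizer; the decreasing limit $E_{\tau*}:=\bigcap_N F_N$ is then a minimizer by monotone convergence and lower semicontinuity, and it lies in every other minimizer $E_\tau$ because $F_N\subset E_{n_k}$ along any subsequence $E_{n_k}\to E_\tau$ in $L^1$. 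The maximal minimizer $E_\tau^*$ is built symmetrically with unions.

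The main obstacle is the passage to the limit in (c): one has to ensure that the approximating minimizers do not escape at infinity and that the inclusion survives the $L_\loc^1$-limit. Both are handled by the a priori density and boundedness estimates of Theorem \ref{teo:density_ests} and Lemma \ref{lem:winterbottom}, which are already available at this stage.
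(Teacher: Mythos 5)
Your submodularity argument for (a) and (b) is correct and is essentially the same mechanism the paper relies on: the paper rewrites $\sF_{\beta_i,f_i}$ as $\sC_{\beta_i}(E)+\int_E h_i\,dx$ with $h_i:=\frac1\tau\sd_{E_0^{(i)}}+\frac1\tau\int_{k\tau}^{(k+1)\tau}f_i\,ds$, observes $h_1>h_2$ under (a) or (b), and then defers to the prescribed-curvature comparison principles of \cite[Section 6]{BKh:2018}, which are proved by exactly the lattice/submodularity computation you spell out. Your pointwise identity reducing the linear terms to a signed contribution on $E_\tau^{(1)}\setminus E_\tau^{(2)}$, together with submodularity of $P_\Phi$, is the right decomposition, and the strictness coming from $f_1>f_2$ in (a), or from $E_0^{(1)}\prec E_0^{(2)}$ via \eqref{compare_trunc_sdist} in (b) (pointwise strictness already suffices; the uniform gap you describe is true but not needed), yields $|E_\tau^{(1)}\setminus E_\tau^{(2)}|=0$.

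For (c), however, the perturbation-and-limit route is an unnecessary detour and has a soft spot: you invoke Theorem~\ref{teo:density_ests} and Lemma~\ref{lem:winterbottom} to control the approximating minimizers, but those results require the hypotheses \eqref{hyp:2} and \eqref{hyp:fb2}, which are \emph{not} among the hypotheses of Theorem~\ref{teo:compare_minima}; in your setup the inclusion $E_{\tau,\epsilon}^{(1)}\subset E_\tau^{(2)}$ and the uniform perimeter bound from near-minimality already give the needed $L^1$-precompactness, so the citation is misplaced rather than fatal. But the cleaner observation is that (c) needs no limiting argument at all: under the non-strict hypotheses $\beta_1\ge\beta_2$, $E_0^{(1)}\subset E_0^{(2)}$, $f_1\ge f_2$, your submodularity inequality combined with the two minimality inequalities for $E_\tau^{(1)}$ and $E_\tau^{(2)}$ forces $\sF_{\beta_1,f_1}(A;E_0^{(1)},\tau,k)=\sF_{\beta_1,f_1}(E_\tau^{(1)};E_0^{(1)},\tau,k)$ and $\sF_{\beta_2,f_2}(B;E_0^{(2)},\tau,k)=\sF_{\beta_2,f_2}(E_\tau^{(2)};E_0^{(2)},\tau,k)$, so $A=E_\tau^{(1)}\cap E_\tau^{(2)}$ and $B=E_\tau^{(1)}\cup E_\tau^{(2)}$ are themselves minimizers of the respective problems; taking $E_{\tau*}^{(1)}:=A\subset E_\tau^{(2)}$ and $E_\tau^{(2)*}:=B\supset E_\tau^{(1)}$ proves (c) directly. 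Specializing to equal data gives the lattice closure underlying (d), and your countable-intersection/union construction of the extremal minimizers is correct; an equivalent, slightly tidier variant is to take a volume-minimizing (resp.\ volume-maximizing) sequence of minimizers, observe that finite intersections (resp.\ unions) stay in the class, and pass to the monotone limit using lower semicontinuity of $\sC_\beta$ and dominated convergence for the linear terms.
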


Setting 
$$
h_i:=\frac{1}{\tau}\sd_{E_0^{(i)}} + \frac{1}{\tau}\int_{k\tau}^{(k+1)\tau}f_i(s,\cdot)ds,\quad i=1,2,
$$
we observe that assumptions (a) and (b) resp. (c) imply $h_1>h_2$ resp. $h_1\ge h_2.$ Since
$$
\sF_{\beta_i,f_i}(E;E_0^{(i)},\tau,k) = \sC_{\beta_i}(E) + \int_Eh_idx,
$$
$\sF_{\beta_i,f_i}$ is a sort of prescribed curvature functional, for which comparison principles are well-established (see also \cite[Section 6]{BKh:2018}). Therefore, we omit the proof.

We refer to  $E_{\tau*}$ and $E_\tau^*$ as the minimal  and maximal minimizers of $\sF_{\beta,f}(\cdot;E_0,\tau,k).$

Now we are ready to establish comparison between GMMs.

\begin{proof}[Proof of Theorem \ref{teo:comparing_gmms}]

(a) Take any $E^{(2)}(\cdot) \in GMM(\sF_{\beta_2,f_2},E_0^{(2)})$  and let $\{E^{(2)}(\tau_i,k)\}$ be flat flows satisfying 
\begin{equation}\label{acnsia78}
\lim_{i\to+\infty}|E^{(2)}(\tau_i,\intpart{t/\tau_i}) \Delta E^{(2)}(t)| = 0\quad\text{for any $t\ge0,$}
\end{equation}
here $\tau_i\to0^+.$  For each $i,$ let $\{E^{(1)}(\tau_i,k)_*\}$ be a flat flow starting from $E_0^{(1)},$ consisting of minimal minimizers. By the discrete comparison principle (Theorem \ref{teo:compare_minima} (c))
\begin{equation}\label{sancuiwbv7}
E^{(1)}(\tau_i,k)_* \subset E^{(2)}(\tau_i,k) \quad \text{for any $k\ge0$}. 
\end{equation}
Repeating the same arguments of the proof of Theorem \ref{teo:intro_existGMM} we can show that there exists a subsequence $\{\tau_{i_j}\}$ and a $E^{(1)}(\cdot)_* \in GMM(\sF_{\beta_1,f_1},E_0^{(1)})$ such that 
$$
\lim_{j\to+\infty}|E^{(1)}(\tau_{i_j},\intpart{t/\tau_{i_j}})_* \Delta E^{(1)}(t)_*| = 0\quad\text{for any $t\ge0.$}
$$
This, \eqref{acnsia78} and \eqref{sancuiwbv7} imply  \eqref{minimal_gmm}. 

(b) is proven analogously using the maximal minimizers of $\sF_{\beta_2,f_2}.$
\end{proof}

\subsection{Smooth inner and outer barriers}

In this section we assume $\Phi$ is an elliptic  $C^3$ anisotropy, $\beta\in C^1(\p\Omega)$ satifies \eqref{beta_condio} and $f\in C^1(\R_0^+\times\cl{\Omega}).$ 

\begin{lemma}\label{lem:regularity_minimizers}
Let $E_0\in\cS$ be a bounded set, for $\tau>0$ and $k\in\N$ let $E_\tau$ be a (bounded)  minimizer of $\sF_{\beta,f}(\cdot;E_0,\tau,k)$ and let $\Gamma:=\cl{\pOmega E_\tau}.$ Then:
\begin{itemize}
\item[\rm(a)] there exists a closed set $\Sigma\subset \Gamma$ with $\cH^{n-3}(\Sigma)=0$ such that $\Gamma\setminus \Sigma$ is a $C^{2+\alpha}$-hypersurface with boundary; if $\Phi$ is Euclidean, then $\cH^{n-4}(\Sigma)=0;$ 

\item[\rm(b)] for every $x\in \Omega\cap (\Gamma\setminus \Sigma)$ from the first variation formula \eqref{euler_lagrange} it follows
\begin{equation*} 
\frac{1}{\tau} \sd_{E_0}(x) + \kappa_{E_\tau}(x) + \frac{1}{\tau}\int_{k\tau}^{(k+1)\tau} f(s,x)ds = 0; 
\end{equation*}

\item[\rm(c)] at every $x\in \p\Omega\cap (\Gamma\setminus \Sigma)$ the anisotropic contact angle condition holds:
$$
\nabla\Phi(\nu_{E_\tau}(x))\cdot \be_n = -\beta.
$$

\end{itemize}

\end{lemma}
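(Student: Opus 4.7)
The key observation is to rewrite the functional in the quasi-minimizer form already pointed out before the statement, namely
$$
\sF_{\beta,f}(E;E_0,\tau,k) = \sC_\beta(E) + \int_E h\,dx, \qquad h := \tfrac{1}{\tau}\sd_{E_0} + \tfrac{1}{\tau}\int_{k\tau}^{(k+1)\tau} f(s,\cdot)\,ds.
$$
Under the standing hypotheses ($E_0$ bounded, $f\in C^1$), $h$ is locally bounded and locally Lipschitz on $\cl\Omega$ (the function $\sd_{E_0}$ is globally $1$-Lipschitz, and $f(s,\cdot)$ is $C^1$ uniformly in $s\in[k\tau,(k+1)\tau]$). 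Therefore, since $E_\tau$ is already known to be bounded by \eqref{Linfty_estimos}, there is a constant $\Lambda$ such that for every $F\in\cS$ with $F\Delta E_\tau\Subset B_R$ one has
$$
\sC_\beta(E_\tau) \le \sC_\beta(F) + \Lambda\,|E_\tau\Delta F|.
$$
Hence $E_\tau$ is a $\Lambda$-minimizer of the capillary functional $\sC_\beta$.

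For such $\Lambda$-minimizers the regularity theory is precisely the content of \cite{DPhM:2015, DPhM:2017}. Namely:
\begin{itemize}
\item[(i)] The \emph{interior} part $\Omega\cap\p^*E_\tau$ is $C^{1,\alpha}$, and the interior singular set $(\Omega\cap\p E_\tau)\setminus \p^*E_\tau$ has Hausdorff dimension at most $n-3$ in the elliptic $C^{3+\alpha}$-anisotropic case, and at most $n-8$ in the Euclidean case (Federer's dimension reduction).
\item[(ii)] The \emph{contact set} $\p\Omega\cap\p\Gamma$ and the boundary behaviour of $\p E_\tau$ near $\p\Omega$ are controlled by the De Philippis--Maggi theorem on the regularity of capillary surfaces: up to a closed singular set of codimension at least $3$ (resp.\ $4$ in the Euclidean case), the trace $\p\Omega\cap\Gamma$ is a $C^{1,\alpha}$-hypersurface in $\p\Omega$ and $\Gamma$ meets $\p\Omega$ transversally, with the contact condition prescribed by $\beta$.
\end{itemize}
Combining (i) and (ii) yields a closed $\Sigma\subset\Gamma$ with the required Hausdorff dimension bound such that $\Gamma\setminus\Sigma$ is a $C^{1,\alpha}$-hypersurface with boundary.

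To upgrade to $C^{2+\alpha}$ I will bootstrap via the Euler--Lagrange equation. Outside $\Sigma$, testing the minimality against smooth volume-preserving perturbations $E_t=(\Id+tX)(E_\tau)$ with $X\in C_c^1(\R^n;\R^n)$ satisfying $X\cdot\be_n=0$ on $\p\Omega$, and using the first variation formula \eqref{euler_lagrange}, gives
$$
\int_{\Gamma\setminus\Sigma}\bigl(\kappa_{E_\tau}^\Phi + h\bigr)\,X\cdot\nu_{E_\tau}\,d\cH^{n-1} + \int_{\p\Omega\cap(\Gamma\setminus\Sigma)}X\cdot\Bigl[\cR_{\pi/2}\pi(\nabla\Phi(\nu_\Gamma)) + \beta\bn\Bigr]\,d\cH^{n-2} = 0.
$$
Separating interior and boundary test fields yields, pointwise on $\Omega\cap(\Gamma\setminus\Sigma)$,
$$
\kappa_{E_\tau}^\Phi(x) + \tfrac{1}{\tau}\sd_{E_0}(x) + \tfrac{1}{\tau}\int_{k\tau}^{(k+1)\tau} f(s,x)\,ds = 0,
$$
which is (b), and on $\p\Omega\cap(\Gamma\setminus\Sigma)$ the projection identity that, after the standard geometric identification (as in \cite{DPhM:2015}), is equivalent to $\nabla\Phi(\nu_{E_\tau})\cdot\be_n = -\beta$, which is (c). Since $h\in C^{0,\alpha}_{\mathrm{loc}}(\cl\Omega)$ (indeed Lipschitz), $\Phi\in C^{3+\alpha}$ elliptic and $\beta\in C^{1+\alpha}$, writing $\Gamma\setminus\Sigma$ locally as a graph and reading (b)--(c) as a quasilinear elliptic equation with a conormal boundary condition, Schauder estimates promote the $C^{1,\alpha}$ regularity to $C^{2+\alpha}$.

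The main obstacle is not the bootstrap (which is standard once the PDE is written down) nor the interior regularity of $\Lambda$-minimizers of anisotropic perimeters, but rather the \emph{boundary/contact set} regularity with the sharp codimension bound on the singular set $\Sigma\cap\p\Omega$: this requires the full strength of the De Philippis--Maggi capillary regularity theorems (\cite{DPhM:2015,DPhM:2017}), which are available exactly in the range of $n$ and $\Phi$ assumed elsewhere in the paper and which motivate the dimensional restrictions in Theorem \ref{teo:gmm_vs_smooth}.
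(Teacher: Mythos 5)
Your proposal follows essentially the same route as the paper: the paper's proof is a one-line citation of \cite{DPhM:2015,DPhM:2017} for (a) and (c), with (b) obtained from the first variation formula \eqref{euler_lagrange}, and you simply spell out the underlying $\Lambda$-minimality reduction, the dimensional bounds, and the Schauder bootstrap that this citation implicitly invokes. One small slip: the perturbations $E_t=(\Id+tX)(E_\tau)$ used to derive (b)--(c) need not be volume-preserving (there is no volume constraint in $\sF_{\beta,f}$), and requiring this would in principle only give the Euler--Lagrange relation up to an additive Lagrange multiplier; dropping that qualifier makes the argument both simpler and correct.
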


\begin{proof}
The assertions (a) and (c) follows from \cite{DPhM:2015,DPhM:2017} while (b) follows from (a) and the regularity of $f$ and the first variation formula \eqref{euler_lagrange}.
\end{proof}

The main result of this section is the following analogue of \cite[Lemma 7.3]{ATW:1993} (see also \cite[Lemma 2.13]{Kholmatov:2023}).

\begin{proposition}\label{prop:atw_nonpol}
Assume either $n\le 3$ if $\Phi$ is any elliptic $C^{3}$-anisotropy or $n\le 4$ if $\Phi$ is Euclidean. Let $E_0\in\cS$ be a bounded set and for $\tau>0,$ $k\in\N,$ let $E_\tau$ be a minimizer of $\sF_{\beta,f}(\cdot;E_0,\tau,k)$ and let $G_0,G_\tau$ be bounded sets with $C^{2+\alpha}$ free boundaries $\cl{\pOmega G_0}$ and $\cl{\pOmega G_\tau}$. 

\begin{itemize}
\item[\rm (a)] Let $E_0\subset  G_0,$ $E_\tau\subset  G_\tau,$ $G_\tau$ satisfies the anisotropic contact angle condition with $\beta-s$ for some $s\in (0,\eta)$ and 
\begin{equation}\label{kuchli_qdfu}
\frac{\sd_{G_0}(x)}{\tau} + \kappa^\Phi_{G_\tau}(x) +\frac{1}{\tau}\int_{k\tau}^{(k+1)\tau} f(s,x)ds > 0 \quad   \text{on $\Omega\cap \p G_\tau.$}
\end{equation}
Then $E_\tau\prec G_\tau.$

\item[\rm (b)] Let $G_0\subset  E_0,$ $G_\tau\subset  E_\tau,$ $G_\tau$ satisfies the anisotropic contact angle condition with $\beta+s$ for some $s\in (0,\eta)$ and 
$$
\frac{\sd_{G_0}(x)}{\tau} + \kappa^\Phi_{G_\tau}(x) +\frac{1}{\tau}\int_{k\tau}^{(k+1)\tau} f(s,x)ds < 0 \quad   \text{on $\Omega\cap \p G_\tau.$}
$$
Then $G_\tau\prec E_\tau.$
\end{itemize}
\end{proposition}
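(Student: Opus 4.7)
The plan is to argue by contradiction for part (a); part (b) follows by an entirely symmetric argument. Assume $E_\tau\not\prec G_\tau$. Since $E_\tau\subset G_\tau$ as sets, the failure of strict separation produces a touching point $x_0\in \cl{\pOmega E_\tau}\cap \cl{\pOmega G_\tau}$. The first task is to reduce to a regular touching point: by Lemma~\ref{lem:regularity_minimizers}(a) the singular set $\Sigma\subset\cl{\pOmega E_\tau}$ has $\cH^{n-3}$-measure zero in general and $\cH^{n-4}$-measure zero in the Euclidean case, so under the dimensional restriction $n\le 3$ (resp.\ $n\le 4$) one actually has $\Sigma=\emptyset$. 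Consequently $\cl{\pOmega E_\tau}$ is a $C^{2+\alpha}$-hypersurface with boundary everywhere, and both the pointwise Euler--Lagrange equation of Lemma~\ref{lem:regularity_minimizers}(b) and the anisotropic contact angle condition of Lemma~\ref{lem:regularity_minimizers}(c) are available at $x_0$.

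Next I would split into two cases according to the location of $x_0$. If $x_0\in \Omega$, the two hypersurfaces $\pOmega E_\tau$ and $\pOmega G_\tau$ are smooth and tangent at $x_0$ with $E_\tau$ lying on the inside. Writing both as graphs over the common tangent hyperplane and combining the trace formula $\kappa^\Phi(x)=\Trace[\nabla^2\Phi^o(\nu(x))\nabla^2 R(x)]$ with the positive definiteness of $\nabla^2\Phi^o$ on that hyperplane (a consequence of the ellipticity of $\Phi$), I would obtain $\kappa^\Phi_{E_\tau}(x_0)\ge \kappa^\Phi_{G_\tau}(x_0)$. Together with $\sd_{E_0}(x_0)\ge \sd_{G_0}(x_0)$, which follows from $E_0\subset G_0$ via \eqref{compare_trunc_sdist}, and with the Euler--Lagrange equation for $E_\tau$ at $x_0$, this gives
$$
\frac{\sd_{G_0}(x_0)}{\tau} + \kappa^\Phi_{G_\tau}(x_0) + \frac{1}{\tau}\int_{k\tau}^{(k+1)\tau} f(s,x_0)\,ds \le 0,
$$
in direct contradiction with \eqref{kuchli_qdfu}. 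If instead $x_0\in \p\Omega$, I would exploit the strict mismatch of size $s>0$ between the contact angle conditions $\nabla\Phi(\nu_{E_\tau}(x_0))\cdot\be_n=-\beta(x_0)$ and $\nabla\Phi(\nu_{G_\tau}(x_0))\cdot\be_n=-\beta(x_0)+s$: in local coordinates adapted to $\p\Omega$ (using the smoothness of both free boundaries up to $\p\Omega$), this forces $\pOmega G_\tau$ to meet $\p\Omega$ at a strictly ``more vertical'' angle than $\pOmega E_\tau$, so the inclusion $E_\tau\subset G_\tau$ fails just above $x_0$, a contradiction. Equivalently, the boundary touching is ruled out and one is reduced to the interior case above.

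The main obstacle is the first (regularity) step: without the dimensional hypothesis one cannot guarantee that $x_0$ lies in the smooth part of $\pOmega E_\tau$, where the pointwise Euler--Lagrange equation is available. Extending the proposition beyond $n\le 3$ (resp.\ $n\le 4$ in the Euclidean case) would require the anisotropic capillary analogue of \cite[Lemma~7.3]{ATW:1993}, i.e.\ a classification ruling out singular minimizing cones of $\sC_\beta$ that contain a halfspace---a classification which, as emphasized in the introduction, is not currently available.
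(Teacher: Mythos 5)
Your proposal is correct and follows essentially the same route as the paper's own proof: the dimensional restriction together with Lemma~\ref{lem:regularity_minimizers}(a) eliminates the singular set so that interior touching contradicts the pointwise Euler--Lagrange equation via the comparison $\sd_{E_0}\ge\sd_{G_0}$ and $\kappa^\Phi_{E_\tau}\ge\kappa^\Phi_{G_\tau}$, while boundary touching is excluded by the strict mismatch ($\beta$ versus $\beta-s$) in the anisotropic contact angle conditions. Your version spells out a few more of the local details (graph representation, use of the trace formula) that the paper leaves implicit, but the argument is the same.
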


\begin{proof}
(a) By the assumption on the dimension $\Omega\cap\p^*E_\tau=\pOmega E_\tau.$ Thus, there exists $x_0\in \Omega\cap \pOmega E_\tau\cap \pOmega G_\tau,$ then by assumption $E_0\subset F_0$ we get $\sd_{E_0}(x_0)\ge \sd_{F_0}(x_0)$ and by assumption $E_\tau\subset F_\tau,$ we get $\kappa_{E_\tau}^\Phi(x_0) \ge \kappa_{F_\tau}^\Phi(x_0).$ Therefore, 
$$
0 = \frac{\sd_{E_0}(x_0)}{\tau} + \kappa_{E_\tau}^\Phi(x_0) + \frac{1}{\tau}\int_{k\tau}^{(k+1)\tau} f(s,x_0)ds \ge \frac{\sd_{F_0}(x_0)}{\tau} + \kappa_{F_\tau}^\Phi(x_0) + \frac{1}{\tau}\int_{k\tau}^{(k+1)\tau} f(s,x_0)ds,
$$
which contradicts to \eqref{kuchli_qdfu}. Hence, $\Omega\cap \p E_\tau \cap \p G_\tau = \emptyset.$ Moreover, by Lemma \ref{lem:regularity_minimizers} (a)  $\pOmega E_\tau$ satisfies the anisotropic contact angle condition with $\beta$ at the boundary. Since $\pOmega G_\tau$ satisfies this condition with $\beta-s,$ we have also 
$\cl{\pOmega E_\tau}\cap \cl{\pOmega G_\tau} = \emptyset.$ This implies $E_\tau\prec G_\tau.$

(b) is proven similarly. 
\end{proof}

\subsection{Comparison of flat flows with truncated Wulff shapes}
In this section we assume that $f$ is bounded.

\begin{theorem}\label{teo:compare_with_ball}
Let $E_0\in\cS,$ $\beta$ satisfy \eqref{beta_condio} and $p\in \Omega$ with $R_0:=\dist(p,\pOmega E_0)>0.$ For $\tau>0$ let $\{E(\tau,k)\}$ be flat flows starting from $E_0.$ Then for any $\beta_0\in (\|\beta\|_\infty\},1)$  
\begin{align}
\Omega\cap W_{R_0}^\Phi(p)\subset E_0 \qquad & \Longrightarrow \qquad \Omega\cap W_{\frac{\beta_0 R_0}{16\Phi(\be_n)}}^\Phi(p)\subset E(\tau,k), \label{stay_inside}\\
W_{R_0}^\Phi(p)\cap E_0 = \emptyset \qquad & \Longrightarrow \qquad W_{\frac{\beta_0 R_0}{16\Phi(\be_n)}}^\Phi(p)\cap E(\tau,k) = \emptyset \label{stay_outside}
\end{align}
whenever $0<\tau < \vartheta_0 R_0^2$ and $0\le k\tau \le \frac{\vartheta_0R_0^2}{R_0+1},$ where $\vartheta_0\in(0,1)$ is a constant depending only on $\beta_0$ and the constant $\theta$ of Theorem \ref{teo:density_ests}.
\end{theorem}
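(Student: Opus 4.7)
The plan is induction on $k$ for \eqref{stay_inside}; assertion \eqref{stay_outside} follows by the analogous argument with the competitor $E(\tau,k)\setminus W_r^+$ in place of $E(\tau,k)\cup W_r^+$. Writing $W_r^+:=\Omega\cap W^\Phi_r(p)$, let $R_k$ be the supremum of $r\in[0,R_{k-1}]$ with $W_r^+\subset E(\tau,k)$. The goal reduces to a discrete Gronwall-type shrinkage estimate of the form $R_{k-1}^2-R_k^2\le C(R_{k-1}+1)\tau$ with $C=C(\Phi,\beta_0,c_T)$, which iterates under the stated constraints on $\tau$ and $k\tau$ to $R_k\ge \beta_0 R_0/(16\Phi(\be_n))$.

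For the inductive step, assume $W_{R_{k-1}}^+\subset E(\tau,k-1)$ and fix $r\in(R_k,R_{k-1}]$. Testing the minimality of $E(\tau,k)$ against $F:=E(\tau,k)\cup W_r^+$ and setting $A:=W_r^+\setminus E(\tau,k)$ (of positive measure by the definition of $R_k$), the minimality inequality rewrites as
\begin{equation*}
\sC_\beta(F)-\sC_\beta(E(\tau,k))+\frac{1}{\tau}\int_A\sd_{E(\tau,k-1)}\,dx+\frac{1}{\tau}\int_{k\tau}^{(k+1)\tau}\!\!\!ds\!\int_A\!f\,dx\ge 0.
\end{equation*}
Since $W_r^+\subset W_{R_{k-1}}^+\subset E(\tau,k-1)$ and $|z|\ge c_\Phi\Phi^o(z)$ by \eqref{norm_bounds}, one has $\dist(x,\pOmega E(\tau,k-1))\ge c_\Phi(R_{k-1}-r)$ for $x\in W_r^+$, so $\sd_{E(\tau,k-1)}(x)\le -c_\Phi(R_{k-1}-r)$ on $A$; the forcing contribution is bounded by $c_T\tau|A|$ in absolute value.

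For the capillary increment, submodularity of $\sC_\beta$ yields $\sC_\beta(F)-\sC_\beta(E(\tau,k))\le\sC_\beta(W_r^+)-\sC_\beta(E(\tau,k)\cap W_r^+)\le\sC_\beta(W_r^+)$. The key ingredient is the inequality
\begin{equation*}
\Phi(\be_n)\,\cH^{n-1}(W^\Phi_r(p)\cap\p\Omega)\le P_\Phi(W_r^+,\Omega),
\end{equation*}
obtained by integrating the constant Cahn--Hoffman vector field $\nabla\Phi(\be_n)$ over $W_r^+$ via the divergence theorem and using $\nabla\Phi(\be_n)\cdot\nu\le\Phi(\nu)$ (Young's inequality with $\Phi^o(\nabla\Phi(\be_n))=1$). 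This gives $\sC_\beta(W_r^+)\le(1+\|\beta\|_\infty/\Phi(\be_n))P_\Phi(W_r^+,\Omega)\le C_1 r^{n-1}$. Letting $r\searrow R_k$, the set $A$ concentrates as a thin shell near $\pOmega W_{R_k}^+$ of volume comparable to $R_{k-1}^{n-1}(r-R_k)$, and combining the lower and upper bounds on the capillary increment with the relative isoperimetric inequality on the Wulff shape rearranges to the announced recurrence for $R_k$.

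The main obstacle is the sharp control of the substrate contribution $\int_{W^\Phi_r(p)\cap\p\Omega}\beta\,d\cH^{n-1}$: the Wulff shape $W^\Phi_r(p)$ does \emph{not} satisfy the anisotropic Young law on $\p\Omega$, so the boundary term is not of automatically favorable sign, and only the displayed divergence-theorem identity permits absorbing it into the $\Phi$-perimeter of the spherical cap. The strict inequality $\|\beta\|_\infty<\beta_0\Phi(\be_n)$ built into the hypothesis on $\beta_0$ provides the positive buffer $1-\|\beta\|_\infty/(\beta_0\Phi(\be_n))>0$ required for this absorption; the explicit factor $\beta_0/(16\Phi(\be_n))$ in the conclusion records this buffer together with the ellipticity ratio $c_\Phi/C_\Phi$ and the Wulff isoperimetric constant, with the numerical $16$ a nonsharp round value absorbing lower-order combinatorics and the $c_T\tau$ forcing absorption performed via Theorem~\ref{teo:density_ests}.
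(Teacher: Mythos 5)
Your approach of directly testing the minimality of $E(\tau,k)$ against $E(\tau,k)\cup W_r^+$ (or $E(\tau,k)\setminus W_r^+$ for \eqref{stay_outside}) and running a discrete Gronwall argument on the inner Wulff radius is essentially the Almgren--Taylor--Wang strategy, and the paper explicitly states this does not transfer to the capillary setting. There are two compounding gaps.

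First, the bound on the capillary increment has the wrong scaling. You estimate
$\sC_\beta(F)-\sC_\beta(E(\tau,k))\le\sC_\beta(W_r^+)\le C_1 r^{n-1}$, a quantity independent of $|A|:=|W_r^+\setminus E(\tau,k)|$. But in the minimality inequality the distance term contributes $-\frac{c_\Phi(R_{k-1}-r)}{\tau}|A|$, so in the limit $r\searrow R_k$ (where $|A|\to0$) the inequality degenerates to $0\le C_1R_k^{n-1}$, which gives no recurrence. What one actually needs is an estimate of the form
\begin{equation*}
\sC_\beta(W_r^+)-\sC_\beta(E(\tau,k)\cap W_r^+)\lesssim \frac{|A|}{r},
\end{equation*}
and the only mechanism producing this is that the competitor shape attains \emph{equality} in the relevant isoperimetric inequality (so the difference of energies can be replaced by a difference of volume powers). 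The Wulff shape is the sharp minimizer for $P_\Phi$; for $\sC_{\beta_0}$ with constant $\beta_0$ the sharp minimizer is the Winterbottom shape $\Omega\cap W_R^\Phi(\beta_0R\be_n)$, not the truncated Wulff shape $\Omega\cap W_r^\Phi(p)$. So for $p$ near $\p\Omega$ the direct argument with $W_r^+$ simply cannot produce the needed $|A|/r$ bound; your divergence-theorem absorption only recovers the crude $r^{n-1}$ estimate.

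Second, even granting the isoperimetric scaling, there is an issue with the sign and variability of $\beta$. The paper does not compare $E(\tau,k)$ against Wulff-shape competitors directly. Instead, it introduces an auxiliary flat flow $\{F(\tau,k)_*\}$ for the functional $\sF_{\beta_0,-f}$ with a \emph{constant} adhesion $\beta_0\in((1-2\eta)\Phi(\be_n),\Phi(\be_n))$, started from the Wulff shape (Case 1) or a Winterbottom shape (Step 3), and uses Lemma~\ref{lem:compare_setg}~(b) — valid because $\beta+\beta_0>0$ — to conclude $F(\tau,k)_*\cap E(\tau,k)=\emptyset$. The Gronwall-type shrinkage estimate is then carried out on $F(\tau,k)_*$, where $\beta_0$ is constant, the isoperimetric inequality \eqref{winterbot_ineq9i8} applies with equality for the Winterbottom shape, and the result transfers back via disjointness. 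Your proposal has no analogue of this decoupling step, and no case-splitting on the position of $p$ relative to $\p\Omega$ (the paper splits into three regimes; Steps 2--3 reduce the Wulff radius by the factor $\lambda_0=\beta_0/(8\Phi(\be_n))$, which is where the explicit factor $\beta_0/(16\Phi(\be_n))$ in the conclusion originates — it is structural, not a round absorption constant). As written, the proposal would need a genuinely new idea to replace both the auxiliary flow and the Winterbottom isoperimetric step; the submodularity-plus-divergence route does not close.
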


Theorem \ref{teo:compare_with_ball} is a generalization of \cite[Theorem 5.4]{ATW:1993} and \cite[Theorem 2.11]{Kholmatov:2023} in the anisotropic capillary setting. Notice that due to the presence of boundary terms in $\sF_{\beta,f},$ we cannot argue as in the proof of \cite[Theorem 5.4]{ATW:1993}.
We postpone the proof after the following lemma.

\begin{lemma}\label{lem:compare_setg}
Let $E_0,F_0$ be bounded sets of finite perimeter in $\Omega$ with $|\p E_0|=|\p F_0|=0,$ $\beta_1,\beta_2$ satisfy \eqref{beta_condio}, $f$ is bounded and for $\tau>0$ and $k\in\N,$ let $E_\tau$ be a minimizer of $\sF_{\beta_1,f}(\cdot;E_0,\tau,k).$ 

\begin{itemize}
\item[\rm(a)] Let $F_0\subseteq E_0,$ $\beta_2\ge \beta_1$ and $F_{\tau*}$ be the minimal minimizer of $\sF_{\beta_2,f}(\cdot;F_0,\tau,k).$ Then $F_{\tau*}\subseteq E_\tau.$

\item[\rm(b)] Let $E_0\cap F_0=\emptyset,$ $\beta_1+\beta_2\ge0$ and $F_{\tau*}$ be the minimal minimizer of $\sF_{\beta_2,-f}(\cdot;F_0,\tau,k).$ Then $F_{\tau*}\cap E_\tau=\emptyset.$
\end{itemize}

\end{lemma}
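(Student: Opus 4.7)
My plan is to prove both parts via a symmetric truncation/extension argument: test $E_\tau$ and $F_{\tau*}$ against carefully chosen competitors, add the resulting minimality inequalities, establish a ``reverse'' inequality comparing the same four energies in the opposite direction, and conclude that the competitor used for $F_{\tau*}$ is itself a minimizer of $\sF_{\beta_2,\pm f}(\cdot;F_0,\tau,k)$. The minimal-minimizer property of $F_{\tau*}$ then forces the desired containment or disjointness.

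For part (a) the natural competitors are $E_\tau\cup F_{\tau*}$ (tested against $E_\tau$) and $E_\tau\cap F_{\tau*}$ (tested against $F_{\tau*}$). Adding the two minimality inequalities, it suffices to establish
\begin{equation*}
\sF_{\beta_1,f}(E_\tau;E_0,\tau,k)+\sF_{\beta_2,f}(F_{\tau*};F_0,\tau,k)\ge \sF_{\beta_1,f}(E_\tau\cup F_{\tau*};E_0,\tau,k)+\sF_{\beta_2,f}(E_\tau\cap F_{\tau*};F_0,\tau,k).
\end{equation*}
The $P_\Phi$-piece is the standard anisotropic submodularity $P_\Phi(A,\Omega)+P_\Phi(B,\Omega)\ge P_\Phi(A\cup B,\Omega)+P_\Phi(A\cap B,\Omega)$, coming from the Federer--Vol'pert decomposition of reduced boundaries and the evenness of $\Phi$. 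The forcing piece vanishes exactly via $\chi_A+\chi_B=\chi_{A\cup B}+\chi_{A\cap B}$. The $\beta$-piece reduces to $\int_{\p\Omega}(\beta_2-\beta_1)\chi_{F_{\tau*}\setminus E_\tau}\,d\cH^{n-1}\ge 0$ using $\beta_2\ge\beta_1$, and the distance piece to $\tfrac{1}{\tau}\int_{F_{\tau*}\setminus E_\tau}(\sd_{F_0}-\sd_{E_0})\,dx\ge 0$, where $\sd_{F_0}\ge\sd_{E_0}$ pointwise is the elementary monotonicity following from $F_0\subseteq E_0$. Equality is then forced in both minimality inequalities, so $E_\tau\cap F_{\tau*}$ also minimizes $\sF_{\beta_2,f}(\cdot;F_0,\tau,k)$; the minimal-minimizer property of $F_{\tau*}$ gives $F_{\tau*}\subseteq E_\tau\cap F_{\tau*}\subseteq E_\tau$.

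For part (b) I would take the competitors $E_\tau\setminus F_{\tau*}$ and $F_{\tau*}\setminus E_\tau$. The relevant algebraic identity is now $\chi_A+\chi_B-\chi_{A\setminus B}-\chi_{B\setminus A}=2\chi_{A\cap B}$, so the forcing contributions cancel (crucially, due to the opposite signs of $f$ in the two functionals), the $\beta$-piece becomes $\int_{\p\Omega}(\beta_1+\beta_2)\chi_{E_\tau\cap F_{\tau*}}\,d\cH^{n-1}\ge 0$ thanks to $\beta_1+\beta_2\ge 0$, and the distance piece becomes $\tfrac{1}{\tau}\int_{E_\tau\cap F_{\tau*}}(\sd_{E_0}+\sd_{F_0})\,dx\ge 0$. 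The pointwise inequality $\sd_{E_0}+\sd_{F_0}\ge 0$ follows from $E_0\cap F_0=\emptyset$: any $x\in E_0$ satisfies $F_0\subseteq E_0^c$, hence $\d(x,F_0)\ge \d(x,E_0^c)$, i.e.\ $\sd_{F_0}(x)\ge -\sd_{E_0}(x)$, and symmetrically for $x\in F_0$. The remaining piece is an anisotropic perimeter identity
\begin{equation*}
P_\Phi(A,\Omega)+P_\Phi(B,\Omega)-P_\Phi(A\setminus B,\Omega)-P_\Phi(B\setminus A,\Omega)=2\int_{\p^*A\cap\p^*B\cap\{\nu_A=\nu_B\}}\Phi(\nu_A)\,d\cH^{n-1}\ge 0,
\end{equation*}
which follows by splitting $\p^*(A\setminus B)$ and $\p^*(B\setminus A)$ along the $A$- and $B$-density $0,1$, and common reduced-boundary strata, or equivalently by applying the submodularity in (a) to $A$ and $B^c$ since $P_\Phi(B\setminus A,\Omega)=P_\Phi(A\cup B^c,\Omega)$. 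Equality throughout forces $F_{\tau*}\setminus E_\tau$ to minimize $\sF_{\beta_2,-f}(\cdot;F_0,\tau,k)$, so the minimal-minimizer property yields $F_{\tau*}\subseteq F_{\tau*}\setminus E_\tau$, i.e.\ $F_{\tau*}\cap E_\tau=\emptyset$. The main subtlety I expect is precisely the anisotropic perimeter identity in (b) together with the correct bookkeeping of traces on $\p\Omega$, whereas part (a) uses only classical submodularity.
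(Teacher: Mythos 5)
Your proof is correct and follows essentially the same route as the paper: part (a) is the submodularity-plus-ordering argument that the paper delegates to Theorem \ref{teo:compare_minima}(c), and for part (b) the paper uses exactly your competitors $E_\tau\setminus F_{\tau*}$ and $F_{\tau*}\setminus E_\tau$ together with the anisotropic perimeter submodularity and the cancellation of the forcing terms due to the opposite signs of $f$. The only cosmetic difference is the final step of (b): the paper uses the hypothesis $|\p E_0|=|\p F_0|=0$ to get $\sd_{E_0}+\sd_{F_0}>0$ a.e.\ and thus concludes $|E_\tau\cap F_{\tau*}|=0$ directly from the integral inequality, whereas you infer that equality makes $F_{\tau*}\setminus E_\tau$ a minimizer and then invoke the minimal-minimizer property of $F_{\tau*}$ — both deductions are valid and equivalent in substance.
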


\begin{proof}
(a) follows from Theorem \ref{teo:compare_minima} (c). To prove (b) we sum the inequalities
$$
\sF_{\beta_1,f}(E_\tau;E_0,\tau,k) \le \sF_{\beta_1,f}(E_\tau\setminus F_{\tau*};E_0,\tau,k), \quad 
\sF_{\beta_2,-f}(F_{\tau*};F_0,\tau,k) \le \sF_{\beta_2,-f}(F_{\tau*}\setminus E_{\tau};F_0,\tau,k)
$$
and using 
$$
P(E_\tau,\Omega) + P(F_{\tau*},\Omega) \le P(E_\tau\cap F_{\tau*}^c,\Omega) + P(F_{\tau*}^c\cup E_\tau,\Omega) = P(E_\tau\setminus F_{\tau*},\Omega) + P(F_{\tau*}\setminus E_\tau,\Omega)
$$
we get 
$$
\frac1\tau\int_{F_{\tau*} \cap E_\tau} [\sd_{F_0} + \sd_{E_0}]dx + \int_{\p\Omega} [\beta_1+\beta_2]\chi_{F_{\tau*}\cap E_\tau}d\cH^{n-1}\le0.
$$
Since $E_0\cap F_0=\emptyset$ and $|\p F_0|=|\p E_0|=0,$ we have $\sd_{E_0} + \sd_{F_0}>0$ a.e. in $\Omega.$ Therefore, recalling $\beta_1+\beta_2\ge0,$ we find that the last inequality holds if and only if $|E_\tau\cap F_{\tau*}|=0.$
\end{proof}

Now we are ready to prove relations \eqref{stay_inside}-\eqref{stay_outside}.

\begin{proof}[Proof of Theorem \ref{teo:compare_with_ball}]
Following arguments of \cite[Theorem 2.11]{Kholmatov:2023} we establish only \eqref{stay_outside}, the proof of  \eqref{stay_inside} being similar. 
We divide the proof into smaller steps. Fix $\beta_0\in ((1-2\eta)\Phi(\be_n),\Phi(\be_n)).$ Depending on the position of $p$ we distinguish three cases. 
\bigskip

{\it Case 1:} $W_{R_0}^\Phi(p)\subset \Omega.$ 
\bigskip

Before we proceed, we need some preiminaries. For shortness, set $W_r^\Phi:=W_r^\Phi(p).$ Let $F_0:=\Omega\cap W_r^\Phi$ for some $r>0$ and for $\tau>0$ and $k\in\N$ let $F_{\tau*}$ be the minimal minimizer of $\sF_{\beta_0,-f}(\cdot; F_0,\tau,k).$ Because of the forcing, in general, $F_{\tau*}$ is not necessarily a Wulff shape. By Theorem \ref{teo:density_ests}
$$
\sup_{F_{\tau*}\Delta F_0}\,\d_{F_0}\le \tfrac{\sqrt\tau}{\theta},
$$
thus, further assuming $0<\tau<\frac{c_\Phi^2\theta^2r^2}{25}$ and using \eqref{norm_bounds} and the definition of $\Phi^o$ we get 
$$
\frac{1}{c_\Phi}\, \dist(\p W_r^\Phi ,\p W_{4r/5}^\Phi )\ge  \dist_{\Phi^o}(\p W_r^\Phi ,\p W_{4r/5}^\Phi ) =\tfrac{r}{5}>\tfrac{\sqrt\tau}{c_\Phi\theta}
$$
and therefore, $W_{4r/5}^\Phi\subset  F_{\tau*}.$ Let $W_\rho^\Phi$ be the maximal Wulff shape such that $\Omega\cap W_{\rho}^\Phi \subset F_{\tau*}.$  Clearly, $\rho\ge4r/5.$ We would like to estimate $\rho$ from above.

{\it Claim 1:} Either $\rho\ge r$ or 
\begin{equation}\label{muhojirlar_til}
r> \rho \ge r- \Big(C_{8}+\tfrac{C_9}{r}\Big)\tau
\end{equation}
for some constants $C_8,C_9>0$ depending only on $\Phi,$ $n$ and $\|f\|_\infty.$

Indeed, assume that $\rho<r$ and fix any $\epsilon\in (0,r-\rho).$ By the minimality of $F_{\tau*}$ 
\begin{align*}
0\le & \sF_{\beta_0,-f}(F_{\tau*}\cup W_{\rho+\epsilon}^\Phi;F_0,\tau,k) - \sF_{\beta_0,-f}(F_{\tau*};F_0,\tau,k)  \\
= & P_\Phi(F_{\tau*}\cup W_{\rho+\epsilon}^\Phi) -P_\Phi(F_{\tau*}) 
+ \frac{1}{\tau}\int_{W_{\rho+\epsilon}^\Phi\setminus F_{\tau*}} \sd_{F_0}dx 
+ \frac{1}{\tau}\int_{k\tau}^{(k+1)\tau} ds \int_{W_{\rho+\epsilon}^\Phi\setminus F_{\tau*}} f(s,x) dx \\
= & I_1+I_2+I_3.
\end{align*}
Notice that by the maximality of $\rho,$ $|W_{\rho+\epsilon}^\Phi\setminus F_{\tau*}|\searrow0$ as $\epsilon\to0^+.$
Since $f$ is bounded, 
$$
|I_3|\le \|f\|_\infty |W_{\rho+\epsilon}^\Phi\setminus F_{\tau*}|.
$$
Moreover, by the assumption  $\rho+\epsilon<r,$ 
$$
-\sd_{E_0}=\d_{E_0}\ge c_\Phi \d_{E_0}^{\Phi^o}  \ge  c_\Phi \dist_{\Phi^o}(\p W_{r}^\Phi,\p W_{\rho+\epsilon}^\Phi) = c_\Phi(r-\rho-\epsilon)
$$
in $W_{\rho+\epsilon}^\Phi\setminus F_{\tau*}$ and therefore,
$$
-I_2 \ge \tfrac{c_\Phi(r-\rho - \epsilon)}{\tau}\,|W_{\rho+\epsilon}^\Phi\setminus F_{\tau*}|.
$$
Finally, for a.e. $\epsilon>0$ with $\cH^{n-1}(\p W_{\rho+\epsilon}^\Phi\cap \p^* F_{\tau*})=0$ using the isoperimetric inequality \eqref{isoperimetric_ineq} we get
\begin{align*}
I_1 = & P_\Phi(W_{\rho+\epsilon}^\Phi) - P_\Phi(W_{\rho+\epsilon}^\Phi\cap F_{\tau*}) \le c_{\Phi,n} \Big(|W_{\rho+\epsilon}^\Phi|^{\frac{n-1}{n}} - |W_{\rho+\epsilon}^\Phi\cap F_{\tau*}|^{\frac{n-1}{n}}\Big) \\
= & c_{\Phi,n} |W_{\rho+\epsilon}^\Phi|^{\frac{n-1}{n}}\Big(1 - \Big|1 -\tfrac{|W_{\rho+\epsilon}^\Phi\setminus F_{\tau*}|}{|W_{\rho+\epsilon}^\Phi|}  \Big|^{\frac{n-1}{n}}\Big) < \tfrac{c_{\Phi,n}|W_{\rho+\epsilon}^\Phi\setminus F_{\tau*}|}{|W_{\rho+\epsilon}^\Phi|^{1/n}} = \tfrac{c_{\Phi,n}}{|W^\Phi|^{1/n}(\rho+\epsilon)}\,|W_{\rho+\epsilon}^\Phi\setminus F_{\tau*}|,
\end{align*}
where in the last inequality we used
\begin{equation}\label{obvious_estimate098}
(1-x)^\alpha > 1-x,\quad \alpha,x\in(0,1).
\end{equation}
Now using $-I_2 \le I_1+|I_3|$ and the above estimates for $I_i$ we get 
$$
\tfrac{c_\Phi(r-\rho - \epsilon)}{\tau} \le \tfrac{c_{\Phi,n}}{|W^\Phi|^{1/n}(\rho+\epsilon)} + \|f\|_\infty.
$$
Now letting $\epsilon\to0^+,$ and recalling $\rho\ge 4r/5$ we get \eqref{muhojirlar_til}.

Now let $\{E(\tau,k)\}$ be any flat flow starting from $E_0$ and associated to $\sF_{\beta,f},$ and let $\{F(\tau,k)_*\}$ be the flat flow starting from $F_0:=W_{R_0}^\Phi$ and associated to $\sF_{\beta_0,-f},$ consisting of the minimal minimizers. By the choice of $\beta_0,$ one has $\beta+\beta_0>0$ $\cH^{n-1}$-a.e. on $\p\Omega,$ and therefore, by Lemma \ref{lem:compare_setg} (b) $F(\tau,k)_*\cap E(\tau,k)=\emptyset.$
Let $k\mapsto \rho(\tau,k)$ be a nonincreasing sequence such that either $\rho(\tau,k) = \rho(\tau,k-1)$ or $W_{\rho(\tau,k)}^\Phi$ is the maximal Wulff shape contained  in $F(\tau,k)_*.$ By definition, $\rho(\tau,0)=R_0.$ 
By Claim 1, for any $k\ge1,$ we may assume 
\begin{equation}\label{anvar_tursun}
\rho(\tau,k-1) > \rho(\tau,k)\ge \rho(\tau,k-1) - \Big(C_8 + \tfrac{C_9}{\rho(\tau,k-1)}\Big)\tau,\quad \tau \le \tfrac{c_\Phi^2\theta^2\rho(\tau,k-1)}{25}.
\end{equation}
Let $k_0\ge1$ be some element for which $\rho(\tau,k_0)\ge R_0/2.$ By \eqref{anvar_tursun} for any $1\le k\le k_0$
$$
\rho(\tau,k) \ge \rho(\tau,0) - \sum_{i=0}^{k-1} \Big(C_8 + \tfrac{C_9}{\rho(\tau,i)}\Big)\tau \ge R_0 - \Big(C_8 + \tfrac{2C_9}{R_0}\Big)k\tau.
$$
Thus, if $\tau<\frac{c_\Phi^2\theta^2R_0^2}{100}$ and $k\tau \le \frac{R_0^2}{2C_8R_0 + 4C_9},$
then $\rho(\tau,k)\ge R_0/2.$ In particular, by the definition of $F_{\tau*}$
$$
W_{R_0/2}^\Phi(p) \cap E(\tau,k) = \emptyset,\quad 
0<\tau<\tfrac{c_\Phi^2\theta^2R_0^2}{100},\quad 0\le k\tau \le \tfrac{R_0^2}{2C_8R_0 + 4C_9}.
$$
\smallskip

{\it Step 2:} $W_{R_0}^\Phi(p)\setminus\cl{\Omega}\ne\emptyset,$ but $W_{\lambda_0R_0}^\Phi(p)\subset\Omega,$ where $\lambda_0:=\frac{\beta_0}{8\Phi(\be_n)}.$ 
\bigskip

By step 1 (applied with $R_0:=\lambda_0R_0$)
$$
W_{\lambda_0R_0/2}^\Phi(p)\cap E(\tau,k) = \emptyset, \quad 0<\tau<\tfrac{c_\Phi^2\theta^2\lambda_0^2R_0^2}{100},\quad 0\le k\tau \le \tfrac{\lambda_0^2R_0^2}{2C_8\lambda_0R_0 + 4C_9}.
$$
\smallskip

\begin{figure}[htp!]
\includegraphics[width=0.5\textwidth]{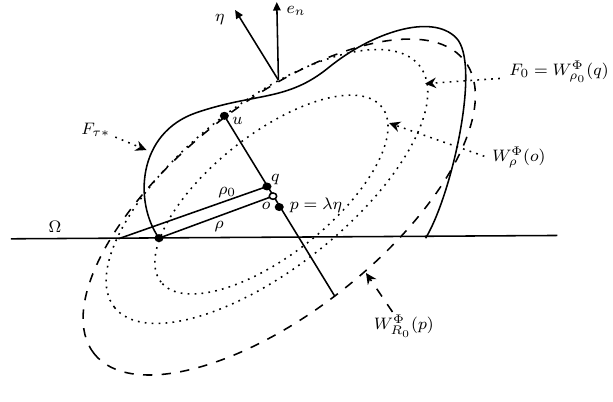}
\caption{\small Winterbottom shapes contained in $F_0$ and $F_{\tau*}$.}\label{fig:wulffs_evolving}
\end{figure}

{\it Step 3:} $W_{\lambda_0R_0}^\Phi(p)\setminus \cl{\Omega}\ne\emptyset$ i.e., $p\cdot\be_3<\lambda_0R_0.$ 
\bigskip

Fix any $\eta\in\p\Phi(\be_n),$ i.e. any vector in $\R^n$ satisfying $\Phi^o(\eta)=1$ and $\eta\cdot \be_n = \Phi(\be_n).$ When $\Phi$ is smooth, $\eta=\nabla\Phi(\be_n)$ and is an outer normal to the Wulff shape $W^\Phi$ at $\be_n^{\Phi^o}.$ For any $r>0$ let us define 
$$
W_r:=\Omega\cap W_{r}^\Phi\Big(\tfrac{\beta_0r\eta}{\Phi(\be_n)}\Big);
$$
Since 
$
\be_n \cdot \tfrac{\beta_0\eta}{\Phi(\be_n)} = \beta_0,
$
$W_r$ is the horizontal translation of the Winterbottom shape $\Omega\cap W_r^\Phi(\beta_0r\be_n)$ with contact angle $\beta_0,$ and thus, is itself 
a Winterbottom shape. 
For simplicity of the presentation, horizontally translating if necessary, we assume that $p = \lambda\eta$ for some $\lambda>0.$ One can readily check that $W_r\subset W_{r'}$ if $r<r'.$ 

By the assumption of step 3, there exists a Winterbottom shape $W_r\subset W_{R_0}^\Phi(p).$ In the notation of Figure \ref{fig:wulffs_evolving} 
let $W_{\rho_0}\subset W_{R_0}^\Phi(p)$ be the largest. Since $W_{\rho_0}$ is the translation in the $\eta$-direction of a Wulff shape $W_{\rho_0}^\Phi(p),$  we have 
\begin{equation}\label{tempeture_right}
R_0 = \rho_0 + \Phi^o(q-p)\quad \Longrightarrow \quad
\rho_0 = \frac{R_0 + \lambda}{1+\frac{\beta_0}{\Phi(\be_n)}}.
\end{equation}
Let $F_0:=W_{\rho_0}$ and $F_{\tau*}$ be the minimal minimizer of $\sF_{\beta_0,-f}(\cdot; F_0,\tau,k).$ As in step 1 assuming $0<\tau<\frac{\theta^2\rho_0^2}{25C_\Phi^2}$ and using \eqref{Linfty_estimos} we can show that 
\begin{equation}\label{nima_bor_orqasida}
\Omega\cap W_{4\rho_0/5}^\Phi(q) \subset  F_{\tau*}.
\end{equation}
Next, let $W_\rho$ and $W_{\bar \rho}$ be the largest Winterbottom shapes contained in $F_{\tau*}$ and in $ W_{4\rho_0/5}^\Phi(q),$ respectively. By \eqref{nima_bor_orqasida} and \eqref{tempeture_right} (applied with $R_0:=4\rho_0/5$)
\begin{equation}\label{headliefht}
\rho\ge \bar\rho = \frac{4\rho_0/5 + \lambda}{1+\frac{\beta_0}{\Phi(\be_n)}}\ge \frac{4\rho_0/5}{1+\frac{\beta_0}{\Phi(\be_n)}}.
\end{equation}

{\it Claim 2:} either $\rho\ge \rho_0$ or 
\begin{equation}\label{stoyte09i}
\rho_0>\rho\ge \rho_0 - \Big(C_{10}+\frac{C_{11}}{\rho_0}\Big)\tau,
\end{equation}
where $C_{10},C_{11}>0$ are some constants depending only on $\Phi,$ $\beta_0,$ $n$ and $\|f\|_\infty.$

Inded, assume that $\rho<\rho_0$ and fix $\epsilon\in(0,\rho_0-\rho).$ Consider the Winterbottom shape $W_{\rho+\epsilon}.$ The maximality of $\rho$ implies $|W_{\rho+\epsilon}\setminus F_{\tau*}|\searrow0$ as $\epsilon\to0^+.$  As in step 1, by the minimality of $F_{\tau*},$
\begin{align*}
0\le & \sF_{\beta_0,f}(F_{\tau*}\cup W_{\rho+\epsilon};F_0,\tau,k) - \sF_{\beta_0,f}(F_{\tau*};F_0,\tau,k) \\
= & \cC_{\beta_0}(F_{\tau*}\cup W_{\rho+\epsilon})-\cC_{\beta_0}(F_{\tau*}) - \frac{1}{\tau}\int_{W_{\rho+\epsilon}\setminus F_{\tau*}} \d_{F_0}dx + \frac{1}{\tau}\int_{k\tau}^{(k+1)\tau}ds\int_{W_{\rho+\epsilon}\setminus F_{\tau*}} f(s,x)dx\\
= & I_1+I_2+I_3.
\end{align*}
By the boundedness of $f,$
$$
|I_3| \le \|f\|_\infty|W_{\rho+\epsilon}\setminus F_{\tau*}|.
$$
Moreover, since the  centers of the Winterbottom shapes $W_{\rho_0}$ and $W_\rho$ lie on the same line, 
$$
-I_2 =
\d_{F_0} \ge c_\Phi\d_{F_0}^{\Phi^o} \ge c_\Phi(\rho_0-\rho-\epsilon-\Phi^o(q-o_\epsilon)) \quad\text{in $W_{\rho+\epsilon}\setminus F_{\tau*},$}
$$
where $o_\epsilon\in\Omega$ and $q$ are the centers of $W_{\rho+\epsilon}$ and $W_{\rho_0}.$ Since $o_\epsilon=\frac{\beta_0(\rho+\epsilon)\eta}{\Phi(\be_n)}$ and $q=\frac{\beta_0 \rho_0\eta}{\Phi(\be_n)},$ 
$$
\frac{1}{\tau}\int_{W_{\rho+\epsilon}\setminus F_{\tau*}} \d_{F_0}dx\ge \frac{c_\Phi(\rho_0-\rho-\epsilon)}{\tau}\,\Big(1-\tfrac{\beta_0}{\Phi(\be_n)}\Big)|W_{\rho+\epsilon}\setminus F_{\tau*}|.
$$
Finally, for a.e. $\epsilon$ with $\cH^{n-1}(\pOmega W_{\rho+\epsilon}\cap \pOmega F_{\tau*})=0$ using \eqref{winterbot_ineq9i8} and \eqref{obvious_estimate098} we get
\begin{align*}
I_1= & 
\cC_{\beta_0}(W_{\rho+\epsilon}) - \cC_{\beta_0}(W_{\rho+\epsilon}\cap F_{\tau*}) \le  c_{\Phi,\beta_0,n} \Big(|W_{\rho+\epsilon}|^{\frac{n-1}{n}} - |W_{\rho+\epsilon}\cap F_{\tau*}|^{\frac{n-1}{n}} \Big)\\
\le &  c_{\Phi,\beta_0,n} |W_{\rho+\epsilon}|^{\frac{n-1}{n}} \Big(1 - \Big|1- \tfrac{|W_{\rho+\epsilon}\setminus F_{\tau*}|} {|W_{\rho+\epsilon}|}\Big|^{\frac{n-1}{n}} \Big)
\le \tfrac{c_{\Phi,\beta_0,n}}{|W_{1}|^{1/n}(\rho+\epsilon)}\,|W_{\rho+\epsilon}\setminus F_{\tau*}|.
\end{align*} 
Using $-I_2\le I_1+|I_3|$ and letting $\epsilon\to0$ we get 
$$
\tfrac{c_\Phi(\rho_0-\rho)}{\tau}\Big(1-\tfrac{\beta_0}{\Phi(\be_n)}\Big) \le
\tfrac{c_{\Phi,\beta_0,n}}{|W_{1}|^{1/n}\rho} + \|f\|_\infty.
$$
Now in view of \eqref{headliefht} we deduce \eqref{stoyte09i} for suitable $C_{10},C_{11}>0$ depending only on $\Phi,n,\beta_0$ and $\|f\|_\infty$.

Now take any flat flows $\{E(\tau,k)\}$ starting from $E_0,$ and given $\rho_0$ in \eqref{tempeture_right}, let the numbers $\rho_0=\rho(\tau,0)\ge \rho(\tau,1)\ge \ldots$ be defined as follows. for each $k\ge1,$ if $\rho(\tau,k)<\rho(\tau,k-1),$ then $W_{\rho(\tau,k)}$ is the maximal Winterbottom shape staying inside the minimal minimizer of $\sF_{\beta_0,-f}(\cdot;W_{\rho(\tau,k-1)},\tau,k).$ By the choice of $R_0$ and the definition of $\rho_0,$
$$
E(\tau,0)\cap W_{\rho(\tau,0)} = \emptyset.
$$
Therefore, applying Lemma \ref{lem:compare_setg} (b) inductively, we deduce 
\begin{equation}\label{compare_geded}
E(\tau,k)\cap W_{\rho(\tau,k)} = \emptyset,\quad k=0,1,2,\ldots.
\end{equation}
As in step 1, let $k_0\ge1$ be such that $\rho(\tau,k_0)\ge \rho_0/2$ and assume that $\tau<\frac{\theta^2\rho_0^2}{100C_\Phi^2}.$ Then $\tau<\frac{\theta^2\rho(\tau,k-1)^2}{25C_\Phi^2}$ for any $1\le k\le k_0$ and hence, by Claim 2,
$$
\rho(\tau,k) \ge \rho(\tau,k-1) + \Big(C_{10} + \tfrac{C_{11}}{\rho(\tau,k-1)}\Big)\tau,\quad k=1,\ldots,k_0.
$$
From this inequality we deduce
$$
\rho(\tau,k) \ge \rho_0 -  \Big(C_{10} + \tfrac{2C_{11}}{\rho_0}\Big)k\tau.
$$
Thus, if we choose 
$0\le k\tau \le \frac{\rho_0^2}{2C_{10}\rho_0 + 4C_{11}},$ 
then $\rho(\tau,k)\ge \rho_0/2.$
Notice that by \eqref{compare_geded} for such $\tau$ and $k$ we have $E(\tau,k)\cap W_{\rho_0/2}=\emptyset.$

Let us show $W_{\lambda_0R_0}(p)\subset W_{\rho_0/2}.$ Since $p=\lambda\eta$ and the Wulff shape $W_{\rho_0/2}$ is centered at $\frac{\beta_0\rho_0 \eta}{2\Phi(\be_n)},$ it suffices to show 
$$
\lambda + \lambda_0R_0 \le \frac{\beta_0\rho_0 }{2\Phi(\be_n)}.
$$ 
By assumption of step 3 and the choice of $p$, the origin lies in $W_{\lambda_0R_0}^\Phi(p),$ and therefore, 
$$
\lambda = \Phi^o(p-0)\le \lambda_0R_0,
$$
and hence, by the choice of $\rho_0$ and assumption $\beta_0<\Phi(\be_n)$ we obtain
$$
\frac{\beta_0\rho_0 }{2\Phi(\be_n)} >\frac{\beta_0 R_0}{4\Phi(\be_n)} = 2\lambda_0R_0 \ge \lambda + \lambda_0R_0.
$$
Thus, $W_{\lambda_0R_0}^\Phi(p)\subset W_{\rho_0/2}.$

Theorem  is proved.
\end{proof}

Notice that when the forcing $f$ is zero, then the coefficients $C_9$ and $C_{11}$ in claim 1 and 2 can be taken $0.$

\section{Smooth $\Phi$-curvature flow of hypersurfaces with boundary}\label{sec:smooth_flow}

Throughout this section $\alpha\in(0,1]$ stands for a constant representing the H\"olderinanity exponent, $\Phi$ is an elliptic at least $C^{3+\alpha}$-anisotropy in $\R^n,$ $\beta\in C^{1+\alpha}(\p\Omega)$ satisfying \eqref{beta_condio} and $f\in C^{1+\frac{\alpha}{2},1+\alpha}(\R_0^+\times \cl{\Omega}).$  

In this section we prove that the evolution equation \eqref{aniso_mce} is well-posed and is solvable even in a more general setting.

\begin{definition}[\textbf{$\Phi$-curvature flow of hypersurfaces}]
A family $\{\Gamma(t)\}_{t\in[0, T)}$ (for some $T>0$) of smooth hypersurfaces in $\Omega$ with boundary is called a (smooth) \emph{$\Phi$-curvature flow, starting from a smooth hypersurface $\Gamma_0\subset \Omega$,
with forcing $f$ and anisotropic contact angle $\beta$} provided that
\begin{equation}\label{main_mean_curvature_PDE}
\begin{cases}
v_{\Gamma(t)} = -\kappa_{\Gamma(t)}^\Phi -f(t,\cdot) & \text{on $\Omega\cap \Gamma(t),$  $t\in [0,T),$}\\[1mm]
\p\Gamma(t)\subset \p\Omega & \text{$t\in [0,T),$} \\[1mm]
\nabla\Phi(\nu_{\Gamma(t)}) \cdot \be_n = -\beta & \text{on $\p\Gamma(t),$ $t\in [0,T),$} \\[1mm]
\Gamma(0) = \Gamma_0.
\end{cases}
\end{equation}
\end{definition}

\subsection{Solvability of \eqref{main_mean_curvature_PDE}}

In this section following the ideas of \cite{BKh:2018,KKR:1995} we prove the following short-time existence of the $\Phi$-curvature flow. 

\begin{theorem}\label{teo:short_time_flow}
Let $\Gamma_0\subset\Omega$ be a bounded $C^{2+\alpha}$-hypersurface with boundary on $\p\Omega$ oriented by a unit normal field $\nu_{\Gamma_0}$ and satisfying the anisotropic contact angle condition
$$
\nabla\Phi(\nu_{\Gamma_0})\cdot \be_n = -\beta\quad\text{on $\p\Gamma_0.$}
$$
Then there exist $T>0,$ depending only on $\Phi,$ $H,$ $\beta,$ $\|II_{\Gamma_0}\|_\infty$ and a $C^{1+\frac{\alpha}{2}}$-in time family $\{\Gamma(t)\}_{t\in[0,T]}$ of $C^{2+\alpha}$-hypersurfaces in $\Omega$ which satisfies \eqref{main_mean_curvature_PDE}.
\end{theorem}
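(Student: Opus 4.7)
The plan is to reduce \eqref{main_mean_curvature_PDE} to a quasilinear parabolic problem for a scalar height function over the fixed reference hypersurface $\Gamma_0$ and solve it by linearization plus a fixed-point argument, as in \cite{BKh:2018,KKR:1995}. First I would choose a $C^{2+\alpha}$ vector field $N$ defined in a tubular neighborhood of $\Gamma_0$, chosen so that $N(x)=\nu_{\Gamma_0}(x)$ on the interior of $\Gamma_0$ but bent near $\p\Gamma_0$ so that $N(x)\in T_x\p\Omega$ on $\p\Gamma_0$. This ensures that the ansatz
\begin{equation*}
\Gamma_\rho(t)=\{x+\rho(t,x)N(x):\,x\in\Gamma_0\},\qquad \rho\in C^{1+\frac{\alpha}{2},2+\alpha}([0,T]\times\Gamma_0),
\end{equation*}
automatically keeps $\p\Gamma_\rho(t)\subset\p\Omega$, so the Dirichlet-type constraint in \eqref{main_mean_curvature_PDE} is absorbed into the parametrization. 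Substituting $\Gamma(t)=\Gamma_\rho(t)$ into the PDE and Young's law rewrites the system as a quasilinear parabolic equation
\begin{equation*}
\p_t\rho=A(x,\rho,\nabla_{\Gamma_0}\rho,\nabla_{\Gamma_0}^2\rho)+F(t,x,\rho),\qquad (t,x)\in(0,T)\times\Gamma_0,
\end{equation*}
coupled with a nonlinear Neumann-type boundary condition
\begin{equation*}
B(x,\rho,\nabla_{\Gamma_0}\rho)=0\quad\text{on }(0,T)\times\p\Gamma_0,\qquad \rho(0,\cdot)=0,
\end{equation*}
where $A$ comes from $-\kappa^\Phi_{\Gamma_\rho}$ and $B$ from $\nabla\Phi(\nu_{\Gamma_\rho})\cdot\be_n+\beta$.

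Next I would linearize around $\rho\equiv0$. Ellipticity of $\Phi$ (hence of $\Phi^o$) yields, via Proposition \ref{prop:elliptic_anis_propo} and the usual formula for the first variation of the anisotropic area, that the principal symbol of the linearization of $A$ is the uniformly elliptic operator $u\mapsto\mathrm{Tr}[\nabla^2\Phi^o(\nu_{\Gamma_0})D^2 u]$ on $\Gamma_0$. The linearization of $B$ reads $\nabla^2\Phi(\nu_{\Gamma_0})[\nu_{\Gamma_0},\cdot]$ applied to a tangential gradient plus a lower-order term in $\rho$; one checks that, thanks to \eqref{beta_condio} and the sign condition $\eta\cdot\nu_Q<0$ from Proposition \ref{prop:regular_distance}(d), this boundary operator is of oblique-derivative type and satisfies the Lopatinski\u{\i}--Shapiro (complementing) condition with respect to the interior operator. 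The linearized problem thus has the Solonnikov form and admits, for any right-hand side $(g,h)\in C^{\frac{\alpha}{2},\alpha}\times C^{\frac{1+\alpha}{2},1+\alpha}$ satisfying the natural zero-order compatibility condition at $t=0$, a unique solution $u\in C^{1+\frac{\alpha}{2},2+\alpha}$ with the a priori estimate
\begin{equation*}
\|u\|_{C^{1+\frac{\alpha}{2},2+\alpha}([0,T]\times\Gamma_0)}\le C\Big(\|g\|_{C^{\frac{\alpha}{2},\alpha}}+\|h\|_{C^{\frac{1+\alpha}{2},1+\alpha}}\Big),
\end{equation*}
with $C$ independent of $T\in(0,1]$ (see \cite{Solonnikov:1965,KKR:1995}).

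The nonlinear problem is then solved by a contraction argument. Writing the system as $\mathcal{L}\rho=\mathcal{N}(\rho)$ with $\mathcal{L}$ the linearization at $0$ and $\mathcal{N}$ collecting the higher-order nonlinearities in $\rho,\nabla\rho,\nabla^2\rho,f$, I would define the map $\mathcal{T}\colon \rho\mapsto\mathcal{L}^{-1}\mathcal{N}(\rho)$ on the closed ball
\begin{equation*}
X_{T,\delta}:=\bigl\{\rho\in C^{1+\frac{\alpha}{2},2+\alpha}([0,T]\times\Gamma_0):\,\rho(0,\cdot)=0,\,\|\rho\|\le\delta\bigr\}.
\end{equation*}
Since $\mathcal{N}$ is smooth in its arguments with $\mathcal{N}(0)$ bounded in terms of $\|II_{\Gamma_0}\|_\infty$, $\|\beta\|_{C^{1+\alpha}}$ and $\|f\|_{C^{\alpha/2,\alpha}}$, a standard interpolation argument shows that $\mathcal{T}$ maps $X_{T,\delta}$ into itself and is a contraction for $T,\delta$ small depending only on $\Phi$, $\|\beta\|_{C^{1+\alpha}}$, $\|f\|_{C^{\alpha/2,\alpha}}$ and $\|II_{\Gamma_0}\|_\infty$. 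The unique fixed point $\rho_*$ gives the desired $\{\Gamma(t)\}_{t\in[0,T]}$.

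The main obstacle I anticipate is the boundary analysis: verifying that the linearized contact-angle operator is genuinely oblique and complementing (so that Solonnikov's parabolic Schauder theory applies with uniform constants in $T$), and checking that the initial data satisfy the zero-order compatibility condition — which follows from the hypothesis $\nabla\Phi(\nu_{\Gamma_0})\cdot\be_n=-\beta$ on $\p\Gamma_0$. The other bookkeeping, in particular writing $\kappa^\Phi_{\Gamma_\rho}$ in local charts on $\Gamma_0$ adapted to the boundary and extracting its principal part, is lengthy but routine and can be done exactly as in \cite[Sect.~3]{KKR:1995} with the Euclidean Laplacian replaced by the $\Phi$-weighted second-order operator above.
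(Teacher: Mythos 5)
Your proposal is correct in outline but takes a genuinely different route from the paper. The paper does \emph{not} reduce to a scalar height function: instead it parametrizes $\Gamma(t)$ by a full vector map $p\colon[0,T]\times\sU\to\R^n$ with $n$ unknowns, supplements the two geometric boundary conditions ($p\cdot\be_n=0$ and the anisotropic contact angle condition) with $n-2$ artificial tangential conditions fixing the gauge freedom in the parametrization (the last line in \eqref{main_mcf_equation}), linearizes the resulting \emph{system} around $p^0$, and applies Solonnikov's theory for parabolic systems followed by a Banach fixed-point argument in $C^{1+\frac{\alpha}{2},2+\alpha}$. Your scalar graph ansatz $\Gamma_\rho=\{x+\rho N(x)\}$ with a bent transversal field $N$ tangent to $\p\Omega$ along $\p\Gamma_0$ is a valid alternative, standard in mean-curvature-flow-with-boundary literature; it trades $n-1$ unknowns and the artificial tangential boundary conditions for the extra preliminary construction of $N$ and for curvature computations in a non-normal graph. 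Both approaches bottom out on Solonnikov estimates and the same contraction scheme, and both use the given compatibility $\nabla\Phi(\nu_{\Gamma_0})\cdot\be_n=-\beta$ in the same way.

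One technical slip worth flagging: you write that the linearized contact-angle operator reads ``$\nabla^2\Phi(\nu_{\Gamma_0})[\nu_{\Gamma_0},\cdot]$ applied to a tangential gradient,'' but $\nabla^2\Phi(\nu_{\Gamma_0})\nu_{\Gamma_0}=0$ by the $0$-homogeneity of $\nabla\Phi$, so that expression vanishes identically and cannot be the principal boundary symbol. The correct obliqueness verification has to isolate a positive multiple of the conormal derivative of $\rho$; in the paper this is encoded in the third normalization condition of \eqref{initially_good}, where $\mu_0>0$ comes precisely from $\nabla^2_{nn}\Phi(N)>0$ (ellipticity), and the complementing condition then reduces to the linear independence of $\be_n,\,N(p_x^0),\,\tau_1^0,\ldots,\tau_{n-2}^0$, which in turn uses \eqref{beta_condio}. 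In your scalar setting the analogous computation must show that the coefficient of $\p_{\cn}\rho$ in the linearized Young's-law condition is nonzero, which again comes from ellipticity of $\Phi$ and the strict inequality $\|\beta\|_\infty<\Phi(\be_n)$; the reference to Proposition~\ref{prop:regular_distance}(d) is not the right tool here, as that statement concerns one-sided differentiability of signed $\Phi$-distances, not the Lopatinski\u{\i}--Shapiro condition.
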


To prove this theorem we first translate geometric PDE \eqref{main_mean_curvature_PDE} into a nonlinear parabolic system using parametrizations. For the convenience of the reader we  divide the proof of the theorem into smaller steps.

\subsubsection{H\"older spaces}

For $T>0,$ an open set $\sU\subset\R^{n-1},$ and a noninteger real number $\gamma>0$ let  $C_T^{\gamma/2,\gamma}$ be the Banach space $C^{\gamma/2,\gamma}([0,T]\times \cl{\sU})$ of the H\"older continuous functions for which
$$
\|w\|_{\gamma,T}:= \sum_{0\le i\le [\gamma/2]} \Big\|\tfrac{\p^iw}{\p t^i}\Big\|_\infty + \sum_{0\le |\mu|\le [\gamma]} \Big\|\tfrac{\p^\mu w}{\p x^\mu} \Big\|_\infty
+ \Big[\tfrac{\p^{[\gamma/2]} w}{\p t^{[\gamma/2]}}\Big]_{t,\gamma/2-[\gamma/2]} + \sum_{|\mu| = [\gamma]} \Big[\tfrac{\p^{[\gamma]} w}{\p x^{[\gamma]}}\Big]_{x,\gamma-[\gamma]} 
$$
is finite.
Here $[x]$ is the integer part of $x\in\R,$ $\mu=(\mu_1,\ldots,\mu_{n-1})\in\N_0^{n-1}$ is a multiindex and $|\mu|:=\mu_1+\ldots+\mu_{n-1}$,
$$
\tfrac{\p^\mu}{\p x^\mu} = \tfrac{\p^{|\mu|}}{\p^{\mu_1} x_1\ldots \p^{\mu_{n-1}} x_{n-1}},
$$
for a continuous function $f\in C^0([0,T]\times \cl{\sU})$ 
$$
\|f\|_\infty = \max_{[0,T]\times \cl{\sU}}\,|f|,
$$
and for $\theta\in (0,1]$ and $f\in C^0([0,T]\times \cl{\sU}),$
$$
[f]_{t,\theta} = \sup_{(t,x),(s,x)\in [0,T]\times \cl{U},s \ne t}\,\,\, \tfrac{|f(t,x) - f(s,x)|}{|t-s|^{\theta}},
\qquad 
[f]_{x,\theta} = \sup_{(t,x),(t,y)\in [0,T]\times \cl{U},x \ne y}\,\,\, \tfrac{|f(t,x) - f(t,y)|}{|x-y|^{\theta}}.
$$
We consider H\"older spaces $C_T^{\gamma/2,\gamma}$ only for $\gamma=\alpha$ and $\gamma=2+\alpha$ for some $\alpha\in(0,1].$ By $[ C_T^{\gamma/2,\gamma}]^m$ we denote the Banach space of vectors $f=(f_1,\ldots,f_m)$ where each $f_i\in C_T^{\gamma/2,\gamma},$ and the norm of $f$ is given by
$$
\|f\|_{\gamma,T} = \sum\limits_{i=1}^m \|f_i\|_{\gamma,T}.
$$

\subsubsection{Introducing the parametrization} 
For simplicity we assume that $\Gamma(t)$ are parametrized by a single chart $p:[0,T]\times \sU \to \Omega,$ where $\sU$ is a bounded $C^{2+\alpha}$-open set in $\R^{n-1}.$ 
In this case, we write
$$
p = 
\begin{bmatrix}
p^1\\
\ldots\\
p^n
\end{bmatrix}
\quad\text{and}\quad 
p_x = 
\begin{bmatrix}
p_x^1\\
\ldots\\
p_x^n
\end{bmatrix}
=
\begin{bmatrix}
p_{x_1}^1 & \ldots & p_{x_{n-1}}^1 \\
& \ldots & \\
p_{x_1}^n & \ldots & p_{x_{n-1}}^n,
\end{bmatrix}
$$
and recall that $\{p_{x_i}\}_{i=1}^{n-1}$ is the set of basis vectors of the tangent hyperplane of $\Gamma(t),$ 
\begin{equation}\label{danf76_normal}
\nu_\Gamma(p_x): = \frac{N(p_x)}{|N(p_x)|},\qquad 
N(p_x) = p_{x_1}\times \ldots \times p_{x_{n-1}} 
=
\det \,
\begin{bmatrix}
\be_1 & \ldots & \be_n\\
p_{x_1}^1 & \ldots & p_{x_1}^n\\
& \ldots & \\
p_{x_{n-1}}^1 & \ldots & p_{x_{n-1}}^n 
\end{bmatrix}
,
\end{equation}
is its  ``outward'' unit normal field, where $\times$ is the vector product of two vectors,
$$
g_{ij} = p_{x_i}\cdot p_{x_j},\quad p_{x_i} = \tfrac{\p p}{\p x_i},\quad i,j=1,\ldots,n-1,
$$
are entries of the first fundamental form of $\Gamma(t),$ $\{g^{ij}\}$ is its inverse and 
$$
h_{ij} = - \nu_{\Gamma(t)}\cdot p_{x_ix_j}, \quad p_{x_ix_j} = \tfrac{\p^2p}{\p x_i\p x_j}, \quad i,j=1,\ldots,n-1,
$$
are the entries of the second fundamental form of $\Gamma(t)$. Under these notations, the $\Phi$-curvature of $\Gamma(t) = p(\{t\}\times \sU)$ is represented as (see e.g. \cite{CvM:2004})
$$
\kappa_{\Gamma(t)}^\Phi = \sum\limits_{i,j=1}^{n-1} g^{ij} \, [h_{\Gamma(t)}]_{ij},
$$
where 
$$
[h_{\Gamma(t)}]_{ij}:= \Big(\nabla^2\Phi^o(\nu_{\Gamma(t)})\, \tfrac{\p \nu_{\Gamma(t)}}{\p x_i}\Big)\cdot p_{x_j},\quad i,j=1,\ldots,n-1,
$$
is the entries of the anisotropic version of the second fundamental form. Here, $\frac{\p \nu_{\Gamma(t)}}{\p x_i}$ is understood as a covariant derivative of $\nu_{\Gamma(t)}$ in $\R^n$ and can be defined as 
$$
\frac{\p \nu_{\Gamma(t)}}{\p x_i}=
\sum\limits_{k,l=1}^{n-1} h_{ik}\, g^{kl}\, p_{x_l} = - \sum\limits_{k,l=1}^{n-1} \left(\nu_{\Gamma(t)}\cdot  p_{x_ix_k}\right) \,g^{kl} \,p_{x_l}.
$$
Then the normal velocity of $\Gamma(t)$ is defined as 
$$
v_{\Gamma(t)} = - p_t\cdot \nu_{\Gamma(t)}
$$
and the $\Phi$-curvature is defined as 
$$
\kappa_{\Gamma(t)}^\Phi =  \sum\limits_{i,j,k,l=1}^{n-1} g^{ij}g^{kl} \Big( [\nabla^2\Phi(\nu_{\Gamma(t)})p_{x_l} ]\cdot p_{x_j}\Big)\,p_{x_ix_k}\cdot \nu_{\Gamma(t)}.
$$
This suggests to choose the tangential velocity such that the equation $v_{\Gamma} = -\kappa_{\Gamma}^\Phi-f$ is represented by means of the parametrization $p$ as 
$$
p_t  = \sum\limits_{i,j,k,l=1}^{n-1} g^{ij}g^{kl} \Big([\nabla^2\Phi(\nu_{\Gamma(t)})p_{x_l}]\cdot p_{x_j}\Big)\,p_{x_ix_k} + f\nu_{\Gamma(t)}\quad\text{on $\p\Gamma(t).$}
$$
The boundary condition $\p\Gamma(t)\subset\p\Omega$ is equivalent to $p\cdot \be_n = 0$ on $\p \sU$ and since $\nabla\Phi$ is positively $0$-homogeneous, the anisotropic contact angle condition $\nabla\Phi(\nu_{\Gamma(t)})\cdot \be_n = -\beta$ on $\p\Gamma(t)$ together with \eqref{danf76_normal} becomes as 
$\nabla\Phi(N(p_x))\cdot \be_n = -\beta(p)$ on $\p \sU.$

As in \cite{BKh:2018,KKR:1995}, to keep the presentation simpler, we assume that $\Gamma_0$ admits a parametrization $p^0:\sU\to\Omega$ with  the property 
\begin{equation}\label{initially_good}
\begin{cases}
p^0(x)\cdot \be_n = 0,\\ 
\nabla\Phi(N(p_x^0(x)))\cdot \be_n =-\beta(p^0(x)),\\  
\nabla [\nabla_n \Phi( N(p_x^0(x)) )]
= \mu_0(x)\sum\limits_{i=1}^{n-1} \cn_i^0(x) p_{x_i}^0(x)
\end{cases}
\end{equation}
for $x=(x_1,\ldots,x_{n-1})\in \p \sU,$
where $\nabla_n\Phi = \nabla\Phi\cdot\be_n,$ $\cn^0:=(\cn_1^0,\ldots,\cn_{n-1}^0)$ is the outer unit normal to $\p \sU$ and $\mu_0(x)$ is a scaling factor. The first condition in \eqref{initially_good} maintains that $\p\Gamma_0\subset\p\Omega,$ while the second one is the anisotropic contact angle condition. These two conditions are related solely to the geometry of $\Gamma_0.$ The third condition in \eqref{initially_good} is possible since 
$
\nabla^2\Phi(N)N = \nabla[\nabla\Phi(N)]\cdot N = 0,
$
which in particular implies $\nabla [\nabla_n \Phi( N(p_x^0(x)) )]\cdot N = 0.$ In view of the ellipticity, $\nabla_{nn}^2\Phi(N)>0$ and hence, $\mu_0>0.$  

To make the problem well-posed we still need to impose $n-2$ conditions on the boundary which should determine the boundary tangential velocity of $\Gamma(t).$ Let $\tau_1^0,\ldots,\tau_{n-2}^0$ be the basis of the tangent plane of $\p\Omega\cap \p \Gamma_0.$ We assume that 
\begin{equation*}
\sum\limits_{i=1}^{n-1}\cn_i^0 \,p_{x_i} \cdot \tau_j^0(p^0) = \sum\limits_{i=1}^{n-1} \cn_i^0 \, p_{x_i}^0 \cdot \tau_j^0(p^0),\quad j=1,\ldots,n-2,
\end{equation*}
for $x=(x_1,\ldots,x_{n-1})\in\p \sU.$

Now \eqref{main_mean_curvature_PDE} is represented as
\begin{equation}\label{main_mcf_equation}
\begin{cases}
p_t  = \sum\limits_{i,j,k,l=1}^{n-1} g^{ij}(p_x)g^{kl}(p_x) \Big([\nabla^2\Phi(\nu(p_x))p_{x_l}]\cdot p_{x_j}\Big)\,p_{x_ix_k} + f(t,p)\nu(p_x) \quad \text{in $[0,T]\times \sU,$}\\[1mm]
p\cdot \be_n = 0 \qquad \text{on $[0,T]\times \p \sU,$}\\[1mm] 
\nabla\Phi(N(p_x)) \cdot \be_n = -\beta(p) \qquad  \text{on $[0,T]\times \p\sU,$}\\[1mm]
\sum\limits_{j=1}^{n-1} \cn_j^0 \,p_{x_j}\cdot \tau_i^0(p^0) = \sum\limits_{j=1}^{n-1} \cn_j^0 \, p_{x_j}^0 \cdot \tau_i^0(p^0) \qquad \text{in $[0,T]\times \p\sU$, $i=1,\ldots,n-2,$}\\[1mm]
p(0,\cdot) = p^0,
\end{cases}
\end{equation}
where  $\nu:=\nu_\Gamma$ is given as in \eqref{danf76_normal}.

Now we linearize this system around initial and boundary conditions, solve the linearized problem using Solonnikov theory \cite{Solonnikov:1965}, and then apply a fixed point theory in H\"older spaces to show the solvability of \eqref{main_mcf_equation}.

\subsubsection{Linearization of the system \eqref{main_mcf_equation}}

Using \eqref{initially_good} we rewrite \eqref{main_mcf_equation} as 
\begin{equation}\label{iting_sotmay_piting_sot}
\Big[\cA w, \cP w, \cC w, \cT w, \cI w\Big] = [\bar f,0,\bar b,0,p^0] + \Big[F(w,p^0),0, B(w,p^0),0,0\Big],
\end{equation}
where $w\in [C_{T}^{1+\frac{\alpha}{2},2+\alpha}]^n$ for some $T>0,$ which will be chosen later,
\begin{align*}
& \cA w:=w_t  - \sum\limits_{i,j,k,l=1}^{n-1} g^{ij}(p_x^0)g^{kl}(p_x^0) \Big([\nabla^2\Phi(\nu(p_x^0))p_{x_l}^0]\cdot p_{x_j}^0\Big)\,w_{x_ix_k}, \\
& \cP w = w\cdot \be_n, \\
& \cC w = \nabla^2\Phi(N(p_x^0))\, \nabla N(p_x^0)[w_x] \cdot \be_n + \nabla\beta(p^0)\cdot w, \\
& \cT w = \sum\limits_{j=1}^{n-1}\cn_j^0 w_{x_j}\cdot \tau_i^0(p^0),\\
& \cI w = w(0,\cdot)
\end{align*}  
are homogeneous linear operators, a linearized part of the system \eqref{main_mcf_equation}, where 
$$
\nabla N(p_x^0)[w_x] = \Big(\sum_{i=1}^n\sum_{j=1}^{n-1} \nabla_{p_{x_j}^i} N^1(p_x^0)\,w_{x_j}^i,\ldots,\sum_{i=1}^n\sum_{j=1}^{n-1} \nabla_{p_{x_j}^i} N^n(p_x^0)\,w_{x_j}^i\Big)^T,
$$
the vector-functions
\begin{align*}
& \bar f = f(\cdot,p^0)\nu(p_x^0),\\
& \bar b = \nabla^2\Phi (N(p_x^0))\, \nabla N(p_x^0)[p_x^0] \cdot \be_n + \nabla\beta(p^0)\cdot p^0,
\end{align*}
are the main parts of the right-hand side (independent of $w$) after linearization, and 
\begin{align*}
& 
\begin{aligned}
F(w,p^0) = & \sum\limits_{i,j,k,l=1}^{n-1} g^{ij}(w_x)g^{kl}(w_x) \Big([\nabla^2\Phi (\nu(w_x))w_{x_l}]\cdot w_{x_j}\Big)\,w_{x_ix_k} + f(t,w)\nu(w_x)\\
& -  \sum\limits_{i,j,k,l=1}^{n-1} g^{ij}(p_x^0)g^{kl}(p_x^0) \Big([\nabla^2\Phi (\nu(p_x^0))p_{x_l}^0]\cdot p_{x_j}^0\Big)\,w_{x_ix_k} - f(t,p^0)\nu(p_x^0),
\end{aligned}
\\
& 
\begin{aligned}
B(w,p^0) = & -\Big[\nabla\Phi(N(w_x)) - \nabla\Phi (N(p_x^0)) - \nabla^2\Phi(N(p_x^0))\,\nabla N(p_x^0)[w_x-p_x^0]\Big]\cdot \be_n\\
&
- \Big[\beta(w) - \beta(p_x^0) - \nabla\beta(p_x^0)[w-p^0]\Big]\,|N(p_x^0)|
\end{aligned}
\end{align*}
are nonlinear parts. 
Notice that $F(p^0,p^0) = 0$ and $B(p^0,p^0) = 0.$ 

\subsubsection{Parabolicity of \eqref{iting_sotmay_piting_sot}}

Let us show that the linear operator $\cA$ in the system \eqref{iting_sotmay_piting_sot} is parabolic in the sense of Solonnikov \cite[pp. 9]{Solonnikov:1965}, the linear operators $[\cP,\cC,\cT]$ satisfy the complementary conditions at the boundary $[0,T]\times \p \sU$ and at the initial time $t=0$ \cite[pp. 11-12]{Solonnikov:1965}, and the boundary conditions and initial datum in the right-hand side of \eqref{iting_sotmay_piting_sot} are compatible of order $0$ \cite[pp. 87]{Solonnikov:1965}.
\smallskip

{\it Parabolicity of $\cA.$} For $(t,x)\in[0,T]\times \cl{\sU},$ $z\in\C$ and $\xi\in\C^{n-1}$ let $\sA(t,x,z,\xi)$ be the $n\times n$-diagonal matrix whose all diagonal entries are equal to
$$
p - \sum\limits_{i,j,k,l=1}^{n-1} g^{ij}(p_x^0)g^{kl}(p_x^0) \Big([\nabla^2\Phi (\nu(p_x^0))p_{x_l}^0]\cdot p_{x_j}^0\Big)\xi_i\xi_k,
$$
and $L(t,x,z,\xi):=\det\,\sA(t,x,z,\xi).$ Then for any $\xi\in \R^{n-1}$ the equation $L(t,x,z,i\xi) = 0$ in $z\in\C$ has a unique solution (with multiplicity $n$)
$$
z = -\sum\limits_{i,j,k,l=1}^{n-1} g^{ij}(p_x^0)g^{kl}(p_x^0) \Big([\nabla^2\Phi (\nu(p_x^0))p_{x_l}^0]\cdot p_{x_j}^0\Big)\xi_i\xi_k.
$$
Being a basis of the tangent hyperplane at $p^0(\cdot),$ $p_{x_i}^0(\cdot)$ are orthogonal to $\nu(p_x^0(\cdot)),$ and hence, using the ellipticity of $\Phi$ and Proposition \ref{prop:elliptic_anis_propo} (b) we find
\begin{align*}
z = & -\Big(\nabla^2\Phi (\nu(p_x^0))\Big[\sum\limits_{k,l}g^{kl}\xi_k p_{x_l}^0\Big], \,\Big[\sum_{ij}g^{ij}\xi_ip_{x_j}^0\Big]\Big) \le -\gamma \Big|\sum_{ij}g^{ij}\xi_ip_{x_j}^0\Big|^2 
\end{align*}
for some $\gamma:=\gamma(\Phi,n)>0.$
Since $p_{x_i}^0\cdot p_{x_j}^0 = g_{ij},$ $\{g^{ij}\}$ is the inverse matrix to $\{g_{ij}\}$ and $\{g^{ij}\}$ is positive definite (by the linear independence of $\{p_{x_i}^0\}$), 
$$
\Big|\sum_{ij}g^{ij}\xi_ip_{x_j}^0\Big|^2 = \sum_{i,j,k,l} g^{ij}g^{kl}\xi_i\xi_k \big(p_{x_j}^0 \cdot p_{x_l}^0\big) = \sum_{i,j,k,l} g^{ij}g^{kl}g_{jl} \xi_i\xi_k= \sum_{k,l} g^{kl} \xi_k\xi_l\ge \bar\gamma |\xi|^2
$$
for some $\bar\gamma>0$ depending only on $\Gamma_0.$ Thus, 
$
z \le -\gamma\bar\gamma|\xi|^2
$
and $\cA$ is (uniformly) parabolic.
\smallskip 

{\it Complementary condition for the boundary conditions}. 
Let $\sB_0(t,x,z,\xi)$ be the matrix, corresponding to the highest order part of the boundary operator $[\cP,\cB,\cT]$ whose entries are
$$
B_{kl}(t,x,z,\xi) = 
\begin{cases}
\delta_{ln} & k=1,\\[2mm]
\sum\limits_{i=1}^n\nabla_{ni}^2\Phi(N(p_x^0))\sum\limits_{j=1}^{n-1}\nabla_{p_{x_j}^l}N^i(p_x^0)\xi_j & k=2,\\[3mm]
\tau_{k-2}^{0,l}\,\cn\cdot \xi, & i=3,\ldots,n,
\end{cases}
$$
where $\delta_{xy}=1$ for $x=y$ and $=0$ for $x\ne y,$ and $l=1,\ldots,n.$ By the definition \eqref{danf76_normal} of $N$ and the third relation in \eqref{initially_good}
\begin{align*}
\sum\limits_{i=1}^n \nabla_{ni}^2\Phi(N(p_x^0))\nabla_{p_{x_j}^l}N^i(p_x^0) 
=
\det
\begin{bmatrix}
\nabla [\nabla_n\Phi(N(p_x^0))] \\
 p_{x_1}^0  \\
\ldots \\
p_{x_{j-1}^0}  \\
\be_l \\
p_{x_{j+1}^0} \\
\ldots \\
p_{x_{n-1}^0}
\end{bmatrix}
=
- \mu_0 \cn_j^0 
\det
\begin{bmatrix}
\be_l\\
p_{x_1}^0  \\
\ldots \\
p_{x_{n-1}^0}
\end{bmatrix}
= 
- \mu_0 \cn_j^0 N^l(p_x^0).
\end{align*}
Therefore, we have also 
$$
\sB_0(t,x,z,\xi) = 
\begin{bmatrix}
\be_n\\
-\mu_0 [\cn^0\cdot\xi]\,N(p_x^0) \\
[\cn^0\cdot\xi]\,\tau_1^0\\
\ldots \\
[\cn^0\cdot\xi]\,\tau_{n-2}^0
\end{bmatrix}
.
$$
By \cite[pp. 11]{Solonnikov:1965}, the complementary conditions at the boundary holds iff at every $(t,x)\in [0,T]\times \p\sU$ and every tangent vector $\zeta(x)\in\R^{n-1}$ of $\p \sU$ at $x,$ the rows of the matrix 
$$
\sD(t,x,z,i(\zeta + \lambda \cn)):=\sB_0(t,x,z,i(\zeta + \lambda \cn))\hat\sA(t,x,z,i(\zeta + \lambda \cn))
$$
are linearly independent modulo the polynomial
$$
M^+(t,x,z,\zeta; \lambda) := \Big(\lambda - \lambda_s^+(t,x,z,\zeta)\Big)^n,
$$
where $\hat \sA(t,x,z,\xi) := L(t,x,z,\xi)\sA(t,x,z,\xi)^{-1},$ $\lambda_s^\pm(t,x,z,\zeta)$ are the zeros (of multiplicity $n$) of $L(t,x,z,i(\zeta + \lambda\cn)) = 0$ in $\lambda$ for $\Re(z) \ge -\delta_1|\zeta|^2$ and $|z|^2 + |\zeta|^2>0$ with $\delta_1>0.$ In our case, $\hat\sA$ is the identity matrix multiplied by $(\lambda-\lambda_s^+)^{n-1}(\lambda-\lambda_s^-)^{n-1},$ and hence, in view of the explicit expression of $\sB_0,$ the compatibility condition is equivalent to the linear independence of the vectors 
\begin{equation}\label{nukkadisvhg67}
\be_n,\quad N(p_x^0),\quad \tau_1^0,\quad \ldots,\quad  \tau_{n-2}^0.
\end{equation}
Take $c_1,\ldots,c_n\in\R$ such that 
$$
c_1\be_n + c_2 N(p_x^0) + \sum_{i=3}^n c_i \tau_{i-2}^0 = 0.
$$
By definition, $\tau_j^0\cdot \be_n = 0$ and $\tau_j^0\cdot N(p_x^0) =0,$ and hence, from the linear independence of $\tau_j^0$ (being a basis) $c_i = 0$ for $i\ge3.$ Moreover, if $c_1\ne0$ (hence, $c_2\ne0$), then $\be_n = -\frac{c_2}{c_1}N(p_x^0),$ and therefore, by the angle-condition  (the second equality in \eqref{initially_good}) and the evenness of $\Phi$
$$
-\beta = \nabla\Phi(N(p_x^0))\cdot \be_n = \frac{-c_2}{c_1} \Phi(N(p_x^0)) = \frac{\sign c_2}{\sign c_1}\Phi \Big(\frac{-c_2}{c_1}N(p_x^0)\Big) = \frac{\sign c_2}{\sign c_1}\Phi(\be_n). 
$$
However, in view of \eqref{beta_condio} this equality cannot happen, and therefore, $c_1=c_2=0,$ i.e., the vectors in \eqref{nukkadisvhg67} are linearly independent.
\smallskip

{\it Complementary conditions for the initial datum.} Let $\sC$ be the identity matrix, which corresponds to the operator $\cC.$
By \cite[pp. 12]{Solonnikov:1965} the complementary condition for the initial datum is read as follows: for each $x\in \sU$ the rows of the matrix 
$$
\tilde \sD(x,z): = \sC(x,0,z) \hat\sA(0,x,z,0)
$$
are linearly independent modulo polynomial $z^n.$ As we have seen above $\hat\sA(0,x,z,0)$ is identity matrix multiplied by $z^{n-1},$ and hence, by the definition of $\sC,$ so is $\tilde \sD(x,z).$ Then clearly the rows of $\tilde \sD(x,z)$ are linearly independent modulo $z^n.$
\smallskip

{\it Compatibility conditions.} Notice that while linearizing we obtained the identity
$$
[\cP p^0,\cC p^0,\cT p^0]  = [0,b+B(p^0,p^0),0],
$$
which reads as the $0$-order compatibility of the boundary datum in the right-hand side of \eqref{iting_sotmay_piting_sot} with the initial datum in the sense of \cite[pp. 87]{Solonnikov:1965}.
\smallskip

\subsubsection{Solvability of \eqref{iting_sotmay_piting_sot}}
For $L>1$ and $T>0,$ let $X_{L,T}$ be the collection of all $w\in C_T^{1+\frac{\alpha}{2},2+\alpha}$ such that 
\begin{itemize}
\item[(1)] $w(0,\cdot) = p^0(\cdot)$ in $\cl{\sU},$

\item[(2)] $w\cdot \be_n = 0$ in $[0,T]\times \p \sU,$ 

\item[(3)] $\sum\limits_{i=1}^{n-1}n_i^0 w_{x_i}\cdot \tau_j^0(p^0) = 0$ in $[0,T]\times \p \sU,$

\item[(4)] the vectors $\{w_{x_i}\}_{i=1}^{n-1}$ are linearly independent,

\item[(5)] $\|w\|_{C^{1+\frac{\alpha}{2},1+\alpha}}\le L.$
\end{itemize}
Clearly, $X_{L,T}\ne\emptyset,$ since conditions (1), (2),(3) and (5) allow to construct $w$ first in the neighborhood of $\p \sU$, and then to extend to interior of $\sU$. The condition (4) holds at least for small $T;$ in fact, since 
$$
w_x(t,x) = p_x^0(x) + \int_0^tw_{tx}(s,x)ds, \quad w\in X_L(T),
$$
$\|p_x^0 - w_x(t,\cdot)\|_\infty\le LT$ whenever $t\le T.$ Thus, 
$$
\det(\{w_{x_i}\cdot w_{x_j}\}) = \det(\{p^0_{x_i}\cdot p^0_{x_j}\}) - C_1T,
$$
where $C_1>0$ depends only on $n,$ $\|p_x^0\|_\infty$ and $L.$ Thus, if we choose
$
T<T_1:=\frac{\det(\{p^0_{x_i}\cdot p^0_{x_j}\})}{C_1},
$
then $w_{x_i}$ are linearly independent. We can also show that $X_{L,T}$ is a closed convex subspace of $C_T^{1+\frac{\alpha}{2},2+\alpha}$.

Notice that for any $w\in X_{L,T}$ the vectors $[\bar f+F(w,p^0),0,\bar b+B(w,p^0),0,p^0]$ satisfy the $0$-order compatibility condition, and therefore, there exists a unique $\sS_w \in C_T^{1+\frac{\alpha}{2},2+\alpha}$ such that 
\begin{equation}\label{dandu836czb}
[\cA[\sS_w],\cP[\sS_w],\cC[\sS_w],\cT[\sS_w],\cI[\sS_w]] = [\bar f+F(w,p^0),0,\bar b+B(w,p^0),0,p^0] 
\end{equation}
and
\begin{equation}\label{ding_ding_ding_vottak}
\|\sS_w\|_{2+\alpha,T} \le C_0\Big(\|\bar f+F(w,p^0)\|_{\alpha,T} + \|\bar b+B(w,p^0)\|_{\alpha,T} + \|p^0\|_{\alpha,T}\Big) 
\end{equation}
for some $C_0>0$ (continuously) depending only on $\beta,$ $\Phi,$ $p^0,$ and also on $\sU$ and $n.$ By uniqueness and linearity, from \eqref{ding_ding_ding_vottak} for any $w_1,w_2\in X_{L,T}$ we have
$$
\|\sS_{w_1} - \sS_{w_2}\|_{2+\alpha,T} =\|\sS_{w_1-w_2}\|_{2+\alpha,T} \le C_0\Big(\|F(w_1,p^0) - F(w_2,p^0)\|_{\alpha,T} + \|B(w_1,p^0) - B(w_2,p^0)\|_{\alpha,T}\Big).
$$
Using the explicit expressions of $F$ and $B,$ the definition of $X_{L,T}$ and the equality
\begin{equation}\label{sadnua6}
u(t,x) = p^0(x) + \int_0^t u_t(s,x)ds,\quad w\in C^{1,0}([0,T]\times\cl{U}),
\end{equation}
we can compute
$$
\|F(w_1,p^0) - F(w_2,p^0)\|_{\alpha,T} \le C_1T \|w_1-w_2\|_{2+\alpha,T}
$$
and
$$
\|B(w_1,p^0) - B(w_2,p^0)\|_{\alpha,T} \le C_1T \|w_1-w_2\|_{2+\alpha,T}
$$
for some $C_1$ depending on $L$ but not on $T,w_1$ and $w_2.$  Thus, if we choose $T<T_2:=\frac1{C_0C_1},$ then $w\mapsto \sS_w$ is a contraction. To apply a fixed point theorem, it remains to show that $\sS_w\in X_{L,T}$ whenever $w\in X_{L,T}.$ The equalities (1)-(3) for $\sS_w$ follow from the system \eqref{dandu836czb}. Moreover, since $T<T_1,$ the vectors $\{(\sS_w)_{x_i}\}$ are also linearly independent. It remains to check condition (5). Consider the estimate \eqref{ding_ding_ding_vottak}. 
By definition of $F$ and $B$ (they are somehow estimated by a power of $L$ times the norm of $w-p^0$), 
$$
\|F(w,p^0)\|_{\alpha,T} \le C_2L^{10} T,\qquad \|B(w,p^0)\|_{\alpha,T} \le C_2L^{10} T,
$$
where $C_2$ does not depend on $T>0$ and $L>1,$
and hence, by \eqref{sadnua6} 
$$
\|\bar f+F(w,p^0)\|_{\alpha,T} + \|\bar b+B(w,p^0)\|_{\alpha,T} + \|p^0\|_{\alpha,T} \le \|\bar f\|_{\alpha,T} + \|\bar b\|_{\alpha,T} + \|p^0\|_{\alpha,T} + 2C_2L^{10} T.
$$
Now if we choose  
$$
L:= 1 + 2C_0[\|\bar f\|_{\alpha,T} + \|\bar b\|_{\alpha,T} + \|p^0\|_{\alpha,T} ],
$$
then $\sS_w\in X_{L,T}$
provided 
$$
T \le T_3:=\tfrac{1 + C_0[\|\bar f\|_{\alpha,T} + \|\bar b\|_{\alpha,T} + \|p^0\|_{\alpha,T} ]}{2C_0C_2L^{10}}.
$$
Now the Banach fixed point theorem implies that there exists a unique $w\in X_{L,T}$ which satisfies $\sS_w = w.$ Then \eqref{dandu836czb} implies that $w$ is a solution of \eqref{main_mcf_equation} for small $T>0.$ 

\subsection{Long-time evolution}

Applying Theorem \ref{teo:short_time_flow} inductively we obtain the following generalization of Theorem \ref{teo:short_intro}.

\begin{theorem}\label{teo:max_time_exist}
Let $\Gamma_0\subset\Omega$ be a bounded $C^{2+\alpha}$-hypersurface with boundary satisfying 
$$
\p\Gamma_0 \subset\p\Omega\quad \text{and} \quad \nabla\Phi( \nu_{\Gamma_0}) \cdot \be_n = -\beta \,\, \text{on $\p\Omega$.}
$$
Then there exists a maximal time $T^\dag>0$ and a smooth $\Phi$-curvature flow $\{\Gamma(t)\}_{t\in [0,T^\dag)}$ starting from $\Gamma_0,$ with the forcing $f$ and anisotropic contact angle $\beta.$
\end{theorem}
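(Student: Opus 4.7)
The plan is to iterate Theorem \ref{teo:short_time_flow}. By that theorem, there exist $T_1>0$ and a smooth $\Phi$-curvature flow $\{\Gamma_1(t)\}_{t\in[0,T_1]}$ starting from $\Gamma_0,$ obtained as a fixed point $p\in C_{T_1}^{1+\alpha/2,2+\alpha}$ of the parametric system \eqref{main_mcf_equation}. In particular $p(T_1,\cdot)$ is still a $C^{2+\alpha}$-parametrization of the bounded hypersurface $\Gamma_1(T_1)\subset\Omega,$ and the boundary conditions
$$
p(T_1,\cdot)\cdot\be_n=0,\qquad \nabla\Phi(N(p_x(T_1,\cdot)))\cdot\be_n=-\beta(p(T_1,\cdot))\quad\text{on $\p\sU$}
$$
are preserved along the flow since they hold at every $t\in[0,T_1].$ Hence $\Gamma_1(T_1)$ satisfies the hypotheses of Theorem \ref{teo:short_time_flow}.

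Applying Theorem \ref{teo:short_time_flow} with initial datum $\Gamma_1(T_1)$ produces a smooth flow on some interval $[T_1,T_1+T_2].$ I would then glue the two flows: the concatenation solves \eqref{main_mean_curvature_PDE} on $[0,T_1+T_2],$ and its time-regularity across $t=T_1$ follows from the parametric PDE itself, since both $p_t$ and the right-hand side agree at $t=T_1$ by construction. The resulting glued flow is unique: the fixed-point contraction in Theorem \ref{teo:short_time_flow} produces a unique short-time solution given the initial parametrization (and any two smooth flows coincide also by the strong comparison principle referenced in Theorem \ref{teo:comparison_mcf}), so different restart times yield mutually compatible flows.

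Now let $T^\dag:=\sup\{T>0:\text{there exists a smooth $\Phi$-curvature flow on $[0,T]$ starting from $\Gamma_0$ with forcing $f$ and contact angle $\beta$}\}.$ The previous step gives $T^\dag>0,$ and for any $0<T<T^\dag$ there is a smooth flow on $[0,T]$; by uniqueness these flows are consistent on overlapping intervals, so they define a single smooth flow $\{\Gamma(t)\}_{t\in[0,T^\dag)}.$ Maximality of $T^\dag$ is then automatic: if $T^\dag<+\infty$ and the flow could be extended smoothly past $T^\dag,$ this would contradict the definition of $T^\dag.$

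The only nontrivial point is verifying that the \emph{restart} at $t=T_1$ indeed satisfies the hypotheses of Theorem \ref{teo:short_time_flow}, most notably the third compatibility relation in \eqref{initially_good} involving $\nabla[\nabla_n\Phi(N(p_x))]$ and the factor $\mu_0.$ This, however, is an algebraic consequence of the anisotropic contact angle condition together with the ellipticity of $\Phi$ (which forces $\nabla^2_{nn}\Phi(N)>0$), and can be arranged by choosing an appropriate reparametrization of $\Gamma_1(T_1)$; since such a reparametrization only affects the tangential part of the velocity and not the geometric evolution, it does not obstruct the iteration. Beyond this, the argument is purely bookkeeping: Theorem \ref{teo:max_time_exist} follows at once from Theorem \ref{teo:short_time_flow} and uniqueness.
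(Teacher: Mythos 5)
Your proof is correct and follows essentially the same route as the paper, which states the result simply as an inductive application of Theorem~\ref{teo:short_time_flow}; you have merely filled in the standard bookkeeping (gluing, uniqueness via the comparison principle of Theorem~\ref{teo:comparison_mcf}, and passing to the supremum) that the paper leaves implicit. The only point worth phrasing more carefully is that the third relation in \eqref{initially_good} is not a consequence of the contact-angle condition but a normalization of the parametrization near $\p\sU$ (possible because ellipticity gives $\nabla_{nn}^2\Phi(N)>0$, so $\nabla[\nabla_n\Phi](N)$ has a nonzero $\be_n$-component and is therefore transverse to $\p\Gamma$), which you do acknowledge by invoking a reparametrization.
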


The term ``maximal'' refers to the fact that there is no smooth $\Phi$-curvature flow $\{\Gamma(t)\}_{t\in [0,T')}$ for any $T'>T^\dag.$ Notice that at the maximal time $T^\dag$ for the set $\Gamma(T^\dag)$ (defined for instance as a Kuratowski limit of $\Gamma(t)$ as $t\nearrow T^\dag$) at least one of the following holds (otherwise applying Theorem \ref{teo:short_time_flow} we would extend the flow slightly after $T^\dag$):
\begin{itemize}
\item $\Gamma(T^\dag)$ is not $C^2$-anymore (the curvature blows up),

\item $\cl{\Gamma(T^\dag)}$ is not injective,

\item some interior point of $\Gamma(T^\dag)$ touches to $\p\Omega$ (because of the forcing).
\end{itemize}
In this paper we do not deal with the singularity analysis.

\begin{remark}
The $\Phi$-curvature flow equation is represented by means of the signed distances as 
\begin{equation}\label{repos_sdist_curva}
\frac{\p}{\p t}\,\sd_{E(t)}(x) = \kappa_{E(t)}^\Phi(x) + f(t,x),\quad t\in [0,T^\dag),\,\, x\in \pOmega E(t).
\end{equation} 

\end{remark}

\subsection{Stability of the $\Phi$-curvature flow}

The classical mean curvature flow of boundaries has the following remarkable stability property: \it if $\{E(t)\}_{t\in [0,T^\dag)}$ is the smooth mean curvature flow of $C^{2+\alpha}$-sets, then for every $0<T<T^\dag$ there exists $\epsilon>0$ such that if $F(0)$ is such that $\p F(0)$ belongs to the $C^{2+\alpha}$-neighborhood of $\p E(0),$ then there exists a unique mean curvature flow $\{F(t)\}_{t\in [0,T')}$ starting from $F(0)$ and $T'> T$
\rm (see e.g. \cite[Theorem 7.1]{ATW:1993}).

In this section we prove that the flow solving \eqref{aniso_mce} admits such a stability property. As in \cite{Kholmatov:2023} we are mainly interested in droplets with non-empty contact  on $\p\Omega,$ and therefore, it is natural to restrict ourselves to the regular droplets without connected components ``hanging''  in $\Omega.$

\begin{definition}[\textbf{Admissibility}]\label{def:admissible_sets}
$\,$
\begin{itemize}
\item[\rm(a)] We say a bounded set $E\subset\Omega$  is \emph{admissible} provided there exist a bounded $C^{2+\alpha}$-open set $\sU\subset\R^{n-1}$ and a $C^{2+\alpha}$-diffeomorphism $p\in C^{2+\alpha}(\cl{\sU};\R^n)$ satisfying  
\begin{equation*} 
p[\sU]=\Gamma,\qquad 
p[\p\sU]=\p\Gamma, \qquad 
p\cdot \be_n>0 \,\,\, \text{in $\sU$}
\qquad\text{and}\qquad
p \cdot \be_n=0 \,\,\, \text{on $\p\sU,$}
\end{equation*}
where $\Gamma:=\pOmega E.$ Any such map $p$ is called a \emph{parametrization} of $\Gamma.$ 

\item[\rm(b)] We say $E$ is \emph{admissible with anisotropic contact angle $\beta$} if $E$ is admissible and
\begin{equation}\label{contact_condition09}
\nabla\Phi(\nu_E) \cdot \be_n = -\beta \quad\text{on $\p\Omega\cap \cl{\Gamma}$}.
\end{equation}
We call the number 
\begin{equation}\label{minheights}
h_E:=\min_{x\in\cl{\Gamma},\,\nu_{E}(x)=x+\be_n}\, \,\,x\cdot \be_n
\end{equation}
the \emph{minimal height} of $E$.
Since $E$ satisfies \eqref{contact_condition09} and $\beta$ satisfies  \eqref{beta_condio}, $h_E>0.$

\item[\rm(c)] Let $Q$ be a compact set in $\R^m$ for some $m\ge1.$ We say a family $\{E[q]\}_{q\in Q}$ of bounded subsets of $\Omega$ is \emph{admissible} if there exist $\alpha\in(0,1],$ a bounded $C^{2+\alpha}$-open set $\sU\subset\R^{n-1}$ and a map $p\in C^{2+\alpha,2+\alpha}(Q \times \cl{\sU};\R^n)$ such that $p[q,\cdot]$ is a parametrization of $\pOmega E[q].$  

\item[\rm(d)] We say a family $\{E[q,t]\}_{q\in Q,t\in [0,T)}$ of bounded subsets of $\Omega$ \emph{admissible} if for any $T'\in(0,T)$ there exist  $\alpha\in(0,1],$ a bounded $C^{2+\alpha}$-open set $\sU\subset\R^{n-1}$ and a map $p\in C^{2+\alpha,1+\frac{\alpha}{2},2+\alpha}(Q \times [0,T'] \times \cl{\sU};\R^n)$ such that $p[q,t,\cdot]$ is a parametrization of $\pOmega E[q,t].$  

\end{itemize}
\end{definition}

\begin{remark}\label{rem:admisible_props}
$\,$
\begin{itemize}
\item[\rm(a)] By definition, if $E$ is an admissible set, then the $C^{2+\alpha}$-surface $\Gamma:=\pOmega E$ is diffeomorphic to a bounded  smooth open set in $\R^{n-1}$ and not necessarily connected (clearly, boundaries of two connected components do not touch). In particular, $\Gamma$ cannot not have ``hanging'' components compactly contained in $\Omega.$ Moreover, its boundary $\p\Gamma$ lies on $\p\Omega$ and the relative interior of $\Gamma$ does not touch to $\p\Omega.$  

\item[\rm(b)] When $Q$ is empty in Definition \ref{def:admissible_sets} (d), then we simply write $\{E[t]\}_{t\in [0,T)}$ to denote the corresponding admissible family.

\item[\rm(c)] If $p\in C^{2+\alpha,1+\frac{\alpha}{2},2+\alpha}(Q \times [0,T'] \times \cl{\sU};\R^n)$ is a parametrization of $\{E[q,t]\}_{q\in Q,t\in [0,T)}$ and $\sU'\subset\R^{n-1}$ is a bounded $C^{2+\alpha}$ open set diffeomorphic to $\sU$ via a map $\psi:\sU'\to\sU,$ then $p[q,t,\psi(\cdot)]$ is also a parametrization of $E[q,t]$.
\end{itemize}
\end{remark}

Remark \ref{rem:admisible_props} (c) allows to introduce the closeness of the free boundaries of two droplets. 

\begin{definition}\label{def:def:distance_drops}
For any two admissible set $E_1$ and $E_2$ we write 
$$
\bar d(E_1,E_2) = \inf_{p_1,p_2} \|p_1-p_2\|_{C^{2+\alpha}(\cl{\sU})},
$$
where $p_i\in C^{2+\alpha}(\sU;\R^n)$ is a parametrization of $\pOmega E_i.$  Similarly if $\{E_1[q,t]\}_{q\in Q,t\in [0,T)}$ and $\{E_2[q,t]\}_{q\in Q,t\in [0,T)}$ are two admissible families, we write 
$$
\bar d(E_1(t),E_2(t)) = \inf_{p_1,p_2} \|p_1-p_2\|_{C^{2+\alpha,1+\frac{\alpha}{2},2+\alpha}(Q \times [0,T'] \times \cl{\sU})},
$$
where $p_i\in C^{2+\alpha,1+\frac{\alpha}{2},2+\alpha}(Q \times [0,T'] \times \cl{\sU};\R^n)$ is a parametrization of $E_i[\cdot,\cdot].$
\end{definition} 

One can readily check that the infimum in the definition of $\bar d$ is in fact a minimum.

As we have observed in the proof of Theorem \ref{teo:short_time_flow} the constants $C_0,C_1,C_2$ and bounds $T_1,T_2,T_3$ for local time $T$ continuously depend on $\|f\|_{C^{\alpha/2,\alpha}(\R_0^+\times\cl{\Omega})},$ $\|\beta\|_{C^{1+\alpha}(\p\Omega)},$ $\|\Phi\|_{C^{3+\alpha}(\S^{n-1})}$ and $\|p^0\|_{C^{2+\alpha}(\cl{\sU})}.$ 
This implies the following stability of the flow which generalizes \cite[Theorem 7.1]{ATW:1993}. 

\begin{theorem}[\textbf{Stability of $\Phi$-curvature flow}]\label{teo:stability_mcf}
Let $\Phi_0$ be an elliptic $C^{3+\alpha}$-anisotropy,  $\beta_0\in C^{1+\alpha}(\p\Omega)$ satisfy \eqref{beta_condio}, $f_0\in C^{\frac{\alpha}{2},\alpha}(\R_0^+\times \cl{\Omega}),$  and $\{E_0(t)\}_{t\in[0,T_0)}$ be a bounded smooth $\Phi_i$-curvature flow with forcing $f_i$ and anisotropic contact angle $\beta_i$ for some $T_0>0.$ Then for any $T\in (0,T_0)$ there exist $\epsilon_0>0$ and a nondecreasing function $\psi:\R_0^+\to\R_0^+$ with $\psi(0)=0$ with the following property.
For $i=1,2,$ let $\Phi_i$ be an elliptic $C^{3+\alpha}$-anisotropy, $\beta_i\in C^{1+\alpha}(\p\Omega)$ satisfying \eqref{beta_condio} and $f_i\in C^{\frac{\alpha}{2},\alpha}(\R_0^+\times \cl{\Omega})$ and $\Phi_i$-curvature flow $\{E_i(t)\}_{t\in t\in [0,T_i]}$ with forcing $f_i$ and anisotropic contact angle $\beta_i$ for some $T_i>0$ be such that 
$$
\|\Phi_i - \Phi_0\|_{C^{3+\alpha}(\cl{B_2(0)}\setminus  B_{1/2}(0))} + 
\|\beta_i - \beta_0\|_{C^{1+\alpha}(\p\Omega)} 
+
\|f_i-f_0\|_{C^{\alpha/2,\alpha}([0,T_0]\times\cl{\Omega})}
+\bar d(E_i(0,E_0(0)))
\le \epsilon_0.
$$
Then $T_i>T$ and 
\begin{equation}\label{smooth_dependence_initial}
\bar d(E_1(t),E_2(t)))\le \psi(\bar d(E_1(0),E_2(0)))),\quad t\in [0,T].
\end{equation}
\end{theorem}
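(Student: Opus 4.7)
\textbf{Proof plan for Theorem \ref{teo:stability_mcf}.} The strategy is to bootstrap the contraction/fixed-point machinery of Theorem \ref{teo:short_time_flow} across a finite partition of $[0,T]$, tracking the quantitative dependence of the contraction constants and of the fixed point itself on the data $(\Phi,\beta,f,p^0)$.

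First I fix $T\in(0,T_0)$ and exploit the fact that the reference parametrization $p_0\in C^{2+\alpha,1+\frac{\alpha}{2},2+\alpha}([0,T]\times\cl{\sU};\R^n)$ is uniformly bounded in its norm, with uniform lower bounds on the height $h_{E_0(t)}$ and on $\det\{g_{ij}(p_{0,x})\}$. Because the constants $C_0,C_1,C_2$ and the local existence times $T_1,T_2,T_3$ in the proof of Theorem \ref{teo:short_time_flow} depend continuously on $\|\Phi\|_{C^{3+\alpha}}$, $\|\beta\|_{C^{1+\alpha}}$, $\|f\|_{C^{\alpha/2,\alpha}}$ and $\|p^0\|_{C^{2+\alpha}(\cl{\sU})}$, I choose $\delta>0$ and $\epsilon_1>0$ so that whenever $t_*\in[0,T]$ and the data $(\Phi,\beta,f,p^*)$ lie within $\epsilon_1$ of $(\Phi_0,\beta_0,f_0,p_0(t_*,\cdot))$, the short-time argument with parametrization domain $\sU$ (and the anisotropic compatibility conditions \eqref{initially_good}) produces a flow on $[t_*,t_*+\delta]$ with bounds uniform in $t_*$. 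I then fix a partition $0=t_0<t_1<\cdots<t_K=T$ with $t_{j+1}-t_j<\delta/2$.

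The crucial quantitative step is a Lipschitz-type estimate for the fixed point $w$ in terms of the data. Given two sets of admissible data, let $w_i$ denote the fixed point of the solution map $\sS^{(i)}_\bullet$ associated with $(\Phi_i,\beta_i,f_i,p_i^0)$ via \eqref{dandu836czb}. Splitting
\begin{equation*}
w_1-w_2 = \bigl(\sS^{(1)}_{w_1}-\sS^{(1)}_{w_2}\bigr) + \bigl(\sS^{(1)}_{w_2}-\sS^{(2)}_{w_2}\bigr),
\end{equation*}
the first term is controlled by the contraction factor (chosen $\le 1/2$) times $\|w_1-w_2\|_{2+\alpha,\delta}$. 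The second term satisfies a linear parabolic system whose right-hand sides are differences of the coefficients and nonlinearities $F,B$ and of the data $\bar f,\bar b,p^0$ under the two parameter sets; since all these ingredients depend Lipschitz-continuously on $(\Phi,\beta,f,p^0)$ in the relevant H\"older norms, Solonnikov's estimate \eqref{ding_ding_ding_vottak} yields a bound $C\rho$, where
\begin{equation*}
\rho := \|\Phi_1-\Phi_2\|_{C^{3+\alpha}} + \|\beta_1-\beta_2\|_{C^{1+\alpha}} + \|f_1-f_2\|_{C^{\alpha/2,\alpha}} + \|p_1^0-p_2^0\|_{C^{2+\alpha}(\cl{\sU})}.
\end{equation*}
Absorbing gives $\|w_1-w_2\|_{2+\alpha,\delta}\le 2C\rho$, which is the step-level stability estimate.

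Finally I iterate: at step $j$, assuming $\bar d(E_i(t_j),E_0(t_j))\le C_j\epsilon_0$, the previous Lipschitz bound (with $(\Phi_0,\beta_0,f_0,E_0(t_j))$ versus $(\Phi_i,\beta_i,f_i,E_i(t_j))$) produces $\bar d(E_i(t),E_0(t))\le C_{j+1}\epsilon_0$ for $t\in[t_j,t_{j+1}]$, provided $C_{j+1}\epsilon_0<\epsilon_1$. After $K$ steps one obtains $\bar d(E_i(t),E_0(t))\le C_K\epsilon_0$ on all of $[0,T]$, and the asserted inequality \eqref{smooth_dependence_initial} follows by applying the same estimate once with the data pairs $(\Phi_1,\beta_1,f_1,E_1(0))$ vs.\ $(\Phi_2,\beta_2,f_2,E_2(0))$ and using triangle inequality; the modulus $\psi$ arises as a concave majorant of $\rho\mapsto 2C_K\rho$.

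The main obstacle I anticipate is fitting every perturbed datum into a common parametrization framework: each $E_i(0)$ must be parametrized over the same reference domain $\sU$ by a map satisfying the compatibility relations \eqref{initially_good} for \emph{its own} anisotropy $\Phi_i$ and contact angle $\beta_i$ (not for $\Phi_0,\beta_0$). Since $\bar d(E_i(0),E_0(0))\le\epsilon_0$ and $\Phi_i,\beta_i$ are $C^{3+\alpha}$, resp.\ $C^{1+\alpha}$ close to $\Phi_0,\beta_0$, a small $C^{2+\alpha}$-reparametrization of $\p\sU$ (obtained by applying the implicit function theorem to the finite-codimension system \eqref{initially_good}) restores full compatibility while increasing $\|p_i^0-p_0(0,\cdot)\|_{C^{2+\alpha}}$ only by $O(\epsilon_0)$. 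Once this reduction is in place, the Solonnikov stability machinery proceeds as outlined.
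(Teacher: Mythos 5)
The paper provides no explicit proof of Theorem~\ref{teo:stability_mcf}: it simply notes, in the sentence preceding the statement, that the constants $C_0,C_1,C_2$ and the local times $T_1,T_2,T_3$ from the proof of Theorem~\ref{teo:short_time_flow} depend continuously on the data, and asserts ``This implies the following stability.'' Your proposal is a detailed and correct elaboration of precisely this observation: you bootstrap the contraction estimate from Theorem~\ref{teo:short_time_flow} across a finite partition of $[0,T]$, using Solonnikov's estimate \eqref{ding_ding_ding_vottak} and the decomposition $w_1-w_2=(\sS^{(1)}_{w_1}-\sS^{(1)}_{w_2})+(\sS^{(1)}_{w_2}-\sS^{(2)}_{w_2})$ to obtain a step-level Lipschitz bound for the fixed point in terms of the data. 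You also correctly identify and address the genuinely subtle compatibility issue: the initial parametrization must satisfy the third relation in \eqref{initially_good} for \emph{its own} anisotropy, and a small implicit-function-theorem reparametrization fixes this at cost $O(\epsilon_0)$. This is exactly the machinery the author intends and leaves to the reader.

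One small imprecision worth flagging: in your final step you apply the Lipschitz estimate to the pair $(\Phi_1,\beta_1,f_1,E_1)$ vs.\ $(\Phi_2,\beta_2,f_2,E_2)$ and claim this yields $\psi(\bar d(E_1(0),E_2(0)))$ with $\psi(0)=0$. What the estimate actually produces is a bound of the form $2C_K\rho_{12}$ with
\begin{equation*}
\rho_{12}=\|\Phi_1-\Phi_2\|_{C^{3+\alpha}}+\|\beta_1-\beta_2\|_{C^{1+\alpha}}+\|f_1-f_2\|_{C^{\alpha/2,\alpha}}+\bar d(E_1(0),E_2(0)),
\end{equation*}
not a bound purely in terms of $\bar d(E_1(0),E_2(0))$. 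If $\Phi_1\ne\Phi_2$ but $E_1(0)=E_2(0)$, the flows genuinely diverge while the claimed right-hand side vanishes, so \eqref{smooth_dependence_initial} as literally written cannot hold with $\psi(0)=0$ in general. This is an ambiguity inherited from the paper's statement rather than an error in your argument: either the conclusion is implicitly understood to include the data differences in the argument of $\psi$ (which your proof establishes), or one should read \eqref{smooth_dependence_initial} under the additional hypothesis $\Phi_1=\Phi_2$, $\beta_1=\beta_2$, $f_1=f_2$ (under which your argument specializes cleanly). Either reading is supported by your proof, which is the right one.
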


In what follows we refer to \eqref{smooth_dependence_initial} as \emph{smooth dependence on the initial condition}.

Let us consider some applications of the stability.

\subsubsection{Comparison for $\Phi$-curvature flows}

The main result of this section is the following 

\begin{theorem}[\textbf{Strong comparison}]\label{teo:comparison_mcf}
Let $\Phi$ be an elliptic $C^{3+\alpha}$-anisotropy,  $\beta_i\in C^{1+\alpha}(\p\Omega)$ satisfy \eqref{beta_condio} and $f_i\in C^{\frac{\alpha}{2},\alpha}(\R_0^+\times \cl{\Omega}),$ $\{E_i(t)\}_{t\in[0,T)}$ be  a bounded smooth $\Phi$-curvature flow with forcing $f_i$ and anisotropic contact angle $\beta_i,$ $i=1,2.$
Then
\begin{equation}\label{strong_max_princp}
\beta_1\ge\beta_1,\quad f_1\ge f_2,\quad E_1(0)\prec E_2(0)\qquad\Longrightarrow\qquad E_1(t)\prec E_2(t),\quad t\in[0,T).
\end{equation}
In other words, $\cl{\pOmega E_1(t)}\cap \cl{\pOmega E_2(t)} = \emptyset$ for all $t\in [0,T)$ if so at $t=0.$
\end{theorem}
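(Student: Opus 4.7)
I argue by contradiction via the classical first-touching scheme. Set
\[
t_0 := \sup\{t \in [0,T) : E_1(s) \prec E_2(s)\text{ for all } s \in [0,t]\}.
\]
Strict initial separation together with the smooth time continuity of the flows (Theorem \ref{teo:stability_mcf}) yields $t_0 > 0$. Suppose $t_0 < T$; then $E_1(t_0) \subset E_2(t_0)$ and $\cl{\pOmega E_1(t_0)} \cap \cl{\pOmega E_2(t_0)} \ne \emptyset$. To handle the possible non-strictness of $f_1 \ge f_2$ and $\beta_1 \ge \beta_2$, I invoke stability: for $\epsilon > 0$ small enough that $\beta_1 + \epsilon$ still satisfies \eqref{beta_condio}, let $E_1^\epsilon(\cdot)$ be the smooth $\Phi$-curvature flow from $E_1(0)$ with forcing $f_1 + \epsilon$ and angle $\beta_1 + \epsilon$. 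By Theorem \ref{teo:stability_mcf}, $E_1^\epsilon$ exists on $[0,t_0]$ and converges smoothly to $E_1$ as $\epsilon \to 0^+$; in particular $E_1^\epsilon(0) \prec E_2(0)$.

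\textbf{Strict comparison for the perturbed flow.} I show $E_1^\epsilon(t) \prec E_2(t)$ on $[0,t_0]$. Suppose otherwise and let $s_\epsilon$ be the first touching time, with touching point $x_\epsilon$. \emph{Interior case} ($x_\epsilon \in \Omega$): by Proposition \ref{prop:regular_distance}(c), the Euclidean signed distance $u(t,x) := \sd_{E_1^\epsilon(t)}(x) - \sd_{E_2(t)}(x)$ is smooth near $(s_\epsilon, x_\epsilon)$, nonnegative for $t \le s_\epsilon$, and vanishes at $(s_\epsilon, x_\epsilon)$, giving $\p_t u(s_\epsilon, x_\epsilon) \le 0$. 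Since the outer unit normals of $E_1^\epsilon$ and $E_2$ agree at $x_\epsilon$ and $E_1^\epsilon \subset E_2$, a standard graph comparison gives $\kappa_{E_1^\epsilon}^\Phi(x_\epsilon) \ge \kappa_{E_2}^\Phi(x_\epsilon)$, and \eqref{repos_sdist_curva} yields
\[
\p_t u(s_\epsilon, x_\epsilon) = [\kappa_{E_1^\epsilon}^\Phi - \kappa_{E_2}^\Phi](x_\epsilon) + [f_1 - f_2](s_\epsilon, x_\epsilon) + \epsilon \ge \epsilon > 0,
\]
a contradiction. \emph{Boundary case} ($x_\epsilon \in \p\Omega$): Proposition \ref{prop:regular_distance}(d) applied to both $\sd_{E_i}$ gives for every $\eta \in \S^{n-1}$ with $\eta \cdot \be_n > 0$
\[
u(s_\epsilon, x_\epsilon + r\eta) = r\bigl[(\nu_{E_1^\epsilon}(x_\epsilon) - \nu_{E_2}(x_\epsilon)) \cdot \eta + o(1)\bigr]\quad\text{as } r \to 0^+,
\]
and nonnegativity forces $\nu_{E_1^\epsilon}(x_\epsilon) - \nu_{E_2}(x_\epsilon) = c\be_n$ with $c \ge 0$. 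As $|\nu_{E_1^\epsilon}(x_\epsilon)| = |\nu_{E_2}(x_\epsilon)| = 1$, either $c = 0$ or $c = -2\nu_{E_2}(x_\epsilon) \cdot \be_n > 0$. The latter gives $\nu_{E_1^\epsilon}(x_\epsilon) \cdot \be_n > 0$, which together with admissibility would locate $\pOmega E_1^\epsilon$ below $\p\Omega$ near $x_\epsilon$ — impossible. Hence $\nu_{E_1^\epsilon}(x_\epsilon) = \nu_{E_2}(x_\epsilon)$, and the Young conditions then force $-\beta_1(x_\epsilon) - \epsilon = -\beta_2(x_\epsilon)$, contradicting $\beta_1 \ge \beta_2$ and $\epsilon > 0$.

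\textbf{Conclusion and main obstacle.} Thus $E_1^\epsilon(t) \prec E_2(t)$ on $[0,t_0]$, and smooth convergence $E_1^\epsilon \to E_1$ gives $E_1(t_0) \subset E_2(t_0)$. The upgrade from $\subset$ to $\prec$ at $t_0$ — which contradicts the definition of $t_0$ and completes the proof — follows from the parabolic strong maximum principle (and, for touchings on $\p\Omega$, a parabolic Hopf lemma) applied to $u = \sd_{E_1} - \sd_{E_2}$ in a neighborhood of the putative touching point, using the uniformly parabolic linear inequality $\p_t u \ge \mathcal{L} u$ satisfied by $u$ after linearizing the $\Phi$-curvature operator (ellipticity of $\Phi$ ensures uniform parabolicity of $\mathcal{L}$). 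The delicate step is the boundary scenario on $\p\Omega$: the signed distance is only one-sidedly differentiable there, so the contradiction must be extracted by combining Proposition \ref{prop:regular_distance}(d), the strict Young condition produced by perturbing $\beta_1$, and the one-sided geometry $\pOmega E_i \subset \cl\Omega$, in order to eliminate the two admissible configurations of the normals forced by $|\nu_{E_i}| = 1$ and $\nu_{E_1^\epsilon} - \nu_{E_2} \parallel \be_n$.
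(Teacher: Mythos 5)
Your overall scheme is workable, and the interior first-touching analysis is essentially sound, but there are two significant problems: a concrete error in the boundary case, and an uncompleted (and genuinely nontrivial) final step that the paper's argument avoids altogether.

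\textbf{The boundary scenario.} From the one-sided expansion of the Euclidean signed distance you correctly force $\nu_{E_1^\epsilon}(x_\epsilon)-\nu_{E_2}(x_\epsilon)=c\,\be_n$ with $c\in\{0,-2\nu_{E_2}(x_\epsilon)\cdot\be_n\}$, $c\ge0$. Your treatment of $c=0$ via the two Young conditions is fine. But the elimination of the case $c>0$ by "admissibility" is wrong: an admissible droplet may perfectly well have $\nu_E\cdot\be_n>0$ at a contact point (take, in dimension two, $E=\Omega\cap B_R(0,-a)$ with $0<a<R$; at the contact point $(\sqrt{R^2-a^2},0)$ one has $\nu_E\cdot\be_2=a/R>0$). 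So the claim "$\nu_{E_1^\epsilon}(x_\epsilon)\cdot\be_n>0$ would locate $\pOmega E_1^\epsilon$ below $\p\Omega$" does not follow, and the $c>0$ branch is not closed. It can be closed, and in a way that actually handles both branches simultaneously: since $\nu_{E_1^\epsilon}\ne\nu_{E_2}$ and neither is $\pm\be_n$, the segment joining them misses the origin, so Proposition \ref{prop:elliptic_anis_propo}(e) gives the strict monotonicity $\bigl(\nabla\Phi(\nu_{E_1^\epsilon})-\nabla\Phi(\nu_{E_2})\bigr)\cdot(\nu_{E_1^\epsilon}-\nu_{E_2})>0$, hence $\bigl(\nabla\Phi(\nu_{E_1^\epsilon})-\nabla\Phi(\nu_{E_2})\bigr)\cdot\be_n>0$ when $c>0$; but the Young conditions give $\bigl(\nabla\Phi(\nu_{E_1^\epsilon})-\nabla\Phi(\nu_{E_2})\bigr)\cdot\be_n=-(\beta_1+\epsilon-\beta_2)<0$, a contradiction. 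This is also the spirit of the paper's treatment, which phrases the boundary touching directly in terms of the $\Phi$-signed distances $\sd^\Phi_{E_i}$ and the vector $\eta:=\nabla\Phi(\nu_{E_1})-\nabla\Phi(\nu_{E_2})$, applying Proposition \ref{prop:regular_distance}(d) in direction $-\eta$ and then invoking strict monotonicity of $\nabla\Phi$; that route dispenses with the case split entirely.

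\textbf{The final upgrade from $\subset$ to $\prec$.} You let $\epsilon\to0$ to obtain $E_1(t_0)\subset E_2(t_0)$ and then gesture at a parabolic strong maximum principle/Hopf lemma for $u=\sd_{E_1}-\sd_{E_2}$ to upgrade to strict separation. This step is not carried out, and it is not routine: you would have to set up the linearized uniformly parabolic inequality for $u$ near the touching point, and in the boundary-touching case verify the sign hypotheses of a Hopf lemma at $\p\Omega$ with only the nonstrict inequalities $\beta_1\ge\beta_2$, $f_1\ge f_2$ to work with. The paper sidesteps this entirely with a different and more direct strategy: after reducing to strict inequalities by stability, it defines $d(t):=\min_\Omega(\sd_{E_1(t)}-\sd_{E_2(t)})$, uses a Hamilton-type trick to compute $d'(t)$ \emph{before} the putative vanishing time $t_0$ via the two $\Phi$-curvature evolution identities \eqref{repos_sdist_curva}, employs the translation comparison of curvatures and the $C^{\alpha/2,\alpha}$-regularity of $f_2$ to get $d'(t)\ge\gamma_0-C_{f_2}d(t)^\alpha$, and concludes that $d$ is strictly increasing near $t_0$, contradicting $d(t_0)=0$. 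No strong maximum principle is invoked. I recommend replacing your "upgrade" step with an argument of this type, or at least carrying out the maximum principle computation in full.
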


Further, we refer to assertion \eqref{strong_max_princp} as the \emph{strong comparison principle}. 

\begin{proof}
In view of Theorem \ref{teo:stability_mcf},  decreasing $f_i$ and $\beta_i$ a bit, it is enough to prove \eqref{strong_max_princp} when the inequalities between $\beta_i$ and $f_i$ are strict. For $t\in[0,T]$ let 
$$
a_\Phi^i(t,x):=\sd_{E_i(t)}^\Phi(x),\quad 
a^i(t,x):=\sd_{E_i(t)}(x),
$$
and 
$$
d_\Phi(t):=\min\{x\in\cl{\Omega}:\,\, a_\Phi^1(t,x) - a_\Phi^2(t,x)\},\quad 
d(t):=\min\{x\in\cl{\Omega}:\,\, a^1(t,x) - a^2(t,x)\}.
$$
Since $E(0)\prec F(0),$ by \eqref{compare_trunc_sdist}  $d(0),d_\Phi(0) > 0.$ By contradiction, assume that there exists $t_0\in(0,T)$ such that $d(t),d_\Phi(t) > 0$ in $(0,t_0)$ and $d(t_0)=d_\Phi(t_0) = 0.$ Thus there exists $x_0\in \cl{\pOmega E_1(t_0)} \cap \cl{\pOmega E_2(t_0)}$ and $d_\Phi(t_0) = a_\Phi^1(t_0,x_0)-a_\Phi^2(t_0,x_0)=0.$ 

First assume that $x_0\in \p \Omega$ and let 
$\eta:=\nabla\Phi(\nu_{E_1(t_0)}(x_0))-\nabla\Phi(\nu_{E_2(t_0)}(x_0)).$ Since $-\be_n$ is the outer unit normal to $\Omega,$ by the anisotropic contact angle condition  
$
\eta\cdot (-\be_n)= \beta_1 - \beta_2 > 0.
$
Thus, applying Proposition \ref{prop:regular_distance} (d) with $-\eta$ and recalling the definitions of $x_0$ and $d_\Phi(t_0)$ we find 
$$
0\le a_\Phi^1(t_0,x_0 - s\eta) - a_\Phi^1(t_0,x_0) -a_\Phi^2(t_0,x_0 - s\eta) + a_\Phi^2(t_0,x_0) = -s\Big(\Big[\nu_{E_1(t_0)}^{\Phi^o} - \nu_{E_2(t_0)}^{\Phi^o}\Big] \cdot \eta + o(1)\Big)
$$
as $s\to 0^+,$ where $\eta^{\Phi^o}$ is defined in \eqref{eta_Phio}.  Since $\nabla \Phi(\cdot)$ is strictly maximal monotone (as a subdifferential of convex functions) and positively $0$-homogeneous, 
$$
0\le -\Big[\nu_{E_1(t_0)}^{\Phi^o} - \nu_{E_2(t_0)}^{\Phi^o}\Big] \cdot \eta=  -\Big[\nu_{E_1(t_0)}^{\Phi^o} - \nu_{E_2(t_0)}^{\Phi^o}\Big]\cdot \Big[\nabla\Phi(\nu_{E_1(t_0)}^{\Phi^o})-\nabla\Phi(\nu_{E_2(t_0)}^{\Phi^o})\Big] <0,
$$
a contradiction. Thus, $x_0\in\Omega.$ By the time-smoothness of the flows $E_i(\cdot)$ there exists $\delta>0$ such that for any $t\in [t_0-\delta,t_0]$ minimum points of $f(t,\cdot) - g(t,\cdot)$ lies in $\Omega$ (basically the minimizers belong to a union of half-lines in $\Omega$ starting from $\p\Omega$ and crossing both $\pOmega E_1(t_0)$ and $\pOmega E_2(t_0)$ orthogonally). Therefore, using a Hamilton-type trick (see e.g. \cite[Chapter 2]{Mantegazza:2011}), we can show  
$$
d'(t) = \frac{\p}{\p t}\,\sd_{E_1(t)}(y_t) -\frac{\p}{\p t}\,\sd_{E_2(t)}(y_t),\quad t\in[t_0-\delta,t_0],
$$
where $y_t\in\pOmega E_2(t)$ is any point satisfying $d(t) = \sd_{E_1(t)}(y_t) - \sd_{E_2(t)}(y_t) .$
Let $z_t\in \pOmega E_1(t)$ and $u_t\in \pOmega E_2(t)$ be such that $d(t) = \d_{E_1(t)}(y_t) = |y_t-z_t|$ and $\d_{E_2(t)}(y_t) = |y_t-u_t|.$ By the minimality of $y_t$, $\nu_{E_1(t)}(z_t) = \nu_{E_2(t)}(u_t) = :\nu_0$ and $y_t,z_t,u_t$ lie on the same straight line parallel to $\nu_{E_2(t)}(u_t).$ Now applying \eqref{repos_sdist_curva} we find
$$
d'(t) = \kappa_{E_1(t)}^\Phi(z_t) - \kappa_{E_2(t)}^\Phi(u_t) + f_1(t,z_t) - f_2(t, u_t).
$$
By the minimality of $y_t\in\pOmega E_2(t)$ and smoothness and the ellipticity of $\Phi,$ translating $E_1(t)$ along $\nu_{E_2(t)}(u_t)$ until we reach to $u_t\in \p E_2(t)$ we deduce that $\tilde E_1(t)\subset E_2(t)$ and $\p \tilde E_1(t)$ is tangent to $\p E_2(t)$ at $u_t,$ where $\tilde E_1(t)$ is the translated $E_1(t).$ Then $\kappa_{E_1(t)}^\Phi(z_t)= \kappa_{\tilde E_1(t)}^\Phi(u_t)\ge \kappa_{E_2(t)}^\Phi(u_t)$ and therefore, by the  $C^{\alpha/2,\alpha}$-regularity of $f_2,$  
$$
d'(t) \ge f_1(t,z_t) - f_2(t,u_t) = f_1(t,z_t) - f_2(t,z_t + d(t)\nu_0) \ge f_1(t,z_t) - f_2(t,z_t) - C_{f_2} d(t)^\alpha,
$$
where $C_{f_2}$ is the H\"older constant of $f_2.$ Since $\{E_i(t)\}$ is bounded uniformly in $t\in [t_0-\delta,t_0]$ and by assumption $f_1>f_2,$ there exists $\gamma_0>0$ independent of $t$ such that $f_1(t,z_t) - f_2(t,z_t)\ge \gamma_0.$ Thus, recalling the continuity of $d(\cdot)$ and assumption $d(t_0)=0$ possibly decreasing $\delta$ a bit, we get $d'(t)>\gamma_0/2$ for any $t\in [t_0-\delta,t_0].$ Therefore, $d$ is strictly increasing in this interval so that $0=d(t_0)>d(t_0-\delta)>0,$ a contradiction.

These contradictions show that $\cl{\pOmega  E(t)}\cap \cl{\pOmega F(t)} =\emptyset$ for any $t\in [0,T).$ Hence, $E(t)\prec F(t).$
\end{proof}

\subsubsection{Evolution of tubular neighborhoods}

Recall that a crucial part in the proof of the consistency in \cite[Theorem 7.4]{ATW:1993} is the evolution of tubular neighborhoods \cite[Corollary 7.2]{ATW:1993} which is given by the level sets of signed distance functions. Unfortunately, in our setting due to the contact angle condition we cannot use signed distances. Therefore, as in \cite{Kholmatov:2023} we construct a sort of tubular neighborhoods, which possess similar properties of the true tubular neighborhoods in case of without boundary, important in the proof of the consistency.

To this aim, in the following lemma we define a ``foliation'' of a tubular neighborhood of the boundary of an admissible set, consisting of boundaries of admissible families with a prescribed anisotropic contact angle.

\begin{lemma}[\textbf{Foliations}]
Let $E_0$ be an admissible set with anisotropic contact angle $\beta$. Then there exist positive numbers $\rho\in(0,1)$ and $\sigma\in (0,\eta),$ depending  only\footnote{We ignore the dependence on $\alpha$ and $\eta.$} on $\|II_{E_0}\|_\infty$ and $h_{E_0}$ (see \eqref{minheights}), and admissible families $\{G_0^\pm[r,s]\}_{(r,s)\in[0,\rho]\times[0,\sigma]}$ such that $G_0^\pm[0,0]=E_0$ and for all $(r,s)\in [0,\rho]\times [0,\sigma]$:

\begin{itemize}[itemsep=3pt]
\item[\rm(a)] $\dist(\pOmega G_0^\pm[r,s], \pOmega E_0)\ge r+s$ and
\begin{align*}
& G_0^-[r,s] \subset E_0 \subset G_0^+[r,s]\\
& \dist(\pOmega G_0^\pm[r,s],\pOmega G_0^\pm[0,s])=r,\\
&\dist(\pOmega G_0^\pm[0,s],\pOmega E_0)=s;
\end{align*}

\item[\rm(b)] $G_0^\pm[r,s]$ is admissible with anisotropic contact angle $\beta\mp s.$ 

\end{itemize}
\end{lemma}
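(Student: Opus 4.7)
Let $p^0\in C^{2+\alpha}(\cl{\sU};\R^n)$ be a parametrization of $\pOmega E_0$ as in Definition \ref{def:admissible_sets}; after reparametrizing $\sU$, assume it satisfies the normalization \eqref{initially_good}. The plan is to seek a $C^{2+\alpha,2+\alpha,2+\alpha}$-family of parametrizations
\begin{equation*}
p^\pm[r,s,x] \;=\; p^0(x)\;\pm\;(r+s)\,V^\pm(x,r,s),\qquad x\in\cl{\sU},\ (r,s)\in[0,\rho]\times[0,\sigma],
\end{equation*}
of the candidate boundaries $\pOmega G_0^\pm[r,s]$, where $V^\pm$ is a vector field to be determined so that on $\p\sU$
\begin{equation*}
V^\pm\cdot\be_n=0\qquad\text{and}\qquad \nabla\Phi\bigl(N(\p_x p^\pm[r,s,\cdot])\bigr)\cdot\be_n + (\beta\mp s)\bigl(p^\pm[r,s,\cdot]\bigr)=0.
\end{equation*}
Once $V^\pm$ is in place, the sets in $\Omega$ enclosed by the image of $p^\pm[r,s,\cdot]$ are admissible with anisotropic contact angle $\beta\mp s$ by construction, yielding part (b).

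The vector field $V^\pm$ is constructed through two nested applications of the implicit function theorem in $C^{2+\alpha}(\cl{\sU};\R^n)$. First fix $r=0$. At $s=0$ the choice $V^\pm(\cdot,0,0)=\pm\nu_{E_0}^{\Phi^o}\circ p^0$ (the outer anisotropic unit normal, in the notation of \eqref{eta_Phio}) satisfies the above boundary equations by admissibility of $E_0$. The Fr\'echet derivative in $V$ of the contact angle equation at $(r,s)=(0,0)$ is precisely the linear boundary operator $\cC$ encountered in the linearization of Theorem \ref{teo:short_time_flow}; together with the planarity constraint $\cP V=V\cdot\be_n$, this boundary system was shown there to satisfy the complementarity condition via the linear independence \eqref{nukkadisvhg67}, using the ellipticity of $\Phi$ (Proposition \ref{prop:elliptic_anis_propo}) and the strict bound $\|\beta\|_\infty<\Phi(\be_n)$ from \eqref{beta_condio}. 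Hence the linearized system is invertible in H\"older spaces, and the implicit function theorem provides $V^\pm(\cdot,0,s)\in C^{2+\alpha}(\cl{\sU};\R^n)$ depending $C^{2+\alpha}$-smoothly on $s\in[0,\sigma]$ for $\sigma$ small, producing the family $\{G_0^\pm[0,s]\}_{s}$ with contact angle $\beta\mp s$.

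Applying exactly the same argument with $E_0$ replaced by $G_0^\pm[0,s]$ and the target contact angle held at $\beta\mp s$ (so already met at $r=0$), we obtain for each $s\in[0,\sigma]$ a family $V^\pm(\cdot,r,s)\in C^{2+\alpha}(\cl{\sU};\R^n)$ for $r\in[0,\rho]$, with $\rho$ small and uniform in $s$. Smooth parameter-dependence of the implicit function theorem delivers the joint $C^{2+\alpha,2+\alpha,2+\alpha}$-regularity in $(r,s,x)$. The geometric assertions in (a) follow directly: the sign of $V^\pm(\cdot,0,0)=\pm\nu_{E_0}^{\Phi^o}\circ p^0$ forces the nested inclusion $G_0^-[r,s]\subset E_0\subset G_0^+[r,s]$, and normalizing $V^\pm$ to unit Euclidean length in the interior (equivalently to $\Phi^o$-length comparable to $1$ via \eqref{norm_bounds}) yields $\dist(\pOmega G_0^\pm[r,s],\pOmega E_0)\ge r+s$, $\dist(\pOmega G_0^\pm[0,s],\pOmega E_0)=s$, and $\dist(\pOmega G_0^\pm[r,s],\pOmega G_0^\pm[0,s])=r$, the equalities being realized at interior points of the respective surfaces.

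\textbf{Main obstacle.} The principal analytic hurdle is the uniform complementarity of the linearized boundary operator along the two-parameter perturbation; this is, however, already contained in \eqref{nukkadisvhg67} applied to the smoothly varying family $p^\pm[r,s,\cdot]$, and persists for small $\rho,\sigma$ by continuity, provided $\sigma<\eta$ so that the modified contact angle $\beta\mp s$ still obeys the strict inequality in \eqref{beta_condio}. Smallness of $\rho$ and $\sigma$ is then dictated by two geometric constraints: $(\rho+\sigma)\|II_{E_0}\|_\infty\ll 1$, which prevents the parallel-type surfaces from self-intersecting and keeps $p^\pm[r,s,\cdot]$ a diffeomorphism; and $h_{E_0}-(\rho+\sigma)>0$, which guarantees $p^\pm[r,s,\cdot]\cdot\be_n\ge 0$ throughout $\sU$, so the sets remain inside $\cl{\Omega}$.
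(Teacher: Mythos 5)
Because the paper does not spell out a proof but simply refers to [Kholmatov:2024, Lemma 2.4] and says it can be done "along the same lines," I can only compare your proposal against that intended construction in spirit. Your general strategy (deform the parametrization $p^0$ along a vector field $V^\pm$, enforce the planarity and contact-angle boundary conditions, and read off the distance estimates from the size of the deformation) is the natural one and is almost certainly what the cited lemma does. However, there are two genuine gaps in the way you carry it out.

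First, the implicit function theorem as you invoke it does not apply. Your ansatz $p^\pm[r,s,\cdot]=p^0\pm(r+s)V^\pm(\cdot,r,s)$ makes the contact-angle equation depend on $V^\pm$ only through the product $(r+s)V^\pm$; consequently its Fr\'echet derivative in $V^\pm$ at $(r,s)=(0,0)$ is \emph{zero}, not $\cC$. (Dividing out $(r+s)$ fixes the degeneracy but introduces the discontinuous quantity $s/(r+s)$, which is incompatible with joint $C^{2+\alpha,2+\alpha}$-regularity of $V^\pm$ in $(r,s)$.) More fundamentally, even after a correct rescaling, the pair $(\cP,\cC)$ is a map from $C^{2+\alpha}(\cl{\sU};\R^n)$ to two scalar conditions on $\p\sU$; it is drastically under-determined and hence not an isomorphism, so there is no implicit function theorem to apply. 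The Solonnikov complementarity condition you cite from the proof of Theorem~\ref{teo:short_time_flow} governs well-posedness of the \emph{parabolic PDE system} with those boundary operators; it does not yield invertibility of the bare boundary conditions you have here, where no interior PDE is being solved. To make this rigorous one has to either (i) fix an explicit interior extension (e.g.\ $V^\pm=\nu$ away from a collar of $\p\sU$) and solve for a finite-dimensional or graph-type correction near $\p\sU$, or (ii) impose a normalization (normal graph plus prescribed tangential component) under which the linearized boundary system becomes an isomorphism onto the data. You invoke neither.

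Second, the distance assertions in (a) do not follow from "normalizing $V^\pm$ to unit Euclidean length in the interior." Near $\p\sU$ the constraint $V^\pm\cdot\be_n=0$ forces the deformation to be horizontal, so $V^\pm$ there cannot equal the (generically non-horizontal) unit normal; the magnitude $|V^\pm|$ near $\p\sU$ is therefore dictated by the contact-angle and planarity constraints, not chosen freely, and the inequality $\dist(\pOmega G_0^\pm[r,s],\pOmega E_0)\ge r+s$ must be derived, not asserted. This is precisely where the minimal height $h_{E_0}$ and the bound on $\|II_{E_0}\|_\infty$ enter quantitatively, and it is the part that deserves the most care. (There is also a sign slip: with your ansatz $p^\pm=p^0\pm(r+s)V^\pm$ and $V^\pm(\cdot,0,0)=\pm\nu^{\Phi^o}$, both $p^+$ and $p^-$ move outward; you want either $p^\pm=p^0+(r+s)V^\pm$ with $V^\pm(\cdot,0,0)=\pm\nu$, or $V^\pm(\cdot,0,0)=\nu$ without the sign.)
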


Notice that this lemma is a generalization of \cite[Lemma 2.4]{Kholmatov:2024} to the anisotropic setting can be done along the same lines.

\begin{corollary}\label{cor:time_foliations}
Let $\{E[t]\}_{t\in[0,T)}$ be an admissible family contact angle $\beta.$ Then for any $T'\in(0,T)$ there exist $\rho\in(0,1)$ and $\sigma\in (0,\eta)$ depending only $\sup_{t\in[0,T']} \|II_{E[t]}\|_\infty$ and $\inf_{t\in[0,T']} h_{E[t]},$ and admissible families $\{G_0^\pm[r,s,a]\}_{(r,s,a)\in[0,\rho]\times[0,\sigma]\times[0,T']}$ such that $G_0^\pm[0,0,a]=E[a]$ and for all $(r,s,a)\in [0,\rho]\times [0,\sigma]\times [0,T']$:

\begin{itemize}[itemsep=3pt]
\item[\rm(a)]  $\dist(\pOmega G_0^\pm[r,s,a], \pOmega E[a])\ge r+s$ and
\begin{align*}
&G_0^-[r,s,a] \subset E[a] \subset G_0^+[r,s,a],\\
&\dist(\pOmega G_0^\pm[r,s,a],\pOmega G_0^\pm[0,s,a])=r,\\
&\dist(\pOmega G_0^\pm[0,s,a],\pOmega E[a])=s;
\end{align*}

\item[\rm(b)] $G_0^\pm[r,s,a]$ is admissible with anisotropic contact angle $\beta\mp s.$ 
\end{itemize}
\end{corollary}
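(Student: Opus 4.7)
The plan is to apply the single-slice foliation lemma pointwise in $a\in[0,T']$ and then verify that everything can be done uniformly and smoothly in the time parameter. First I would fix $T'\in(0,T)$ and observe that by Definition \ref{def:admissible_sets}(d) there exist $\alpha\in(0,1]$, a bounded $C^{2+\alpha}$-open set $\sU\subset\R^{n-1}$ and a map $p\in C^{2+\alpha,2+\alpha}([0,T']\times\cl{\sU};\R^n)$ parametrizing $\pOmega E[a]$. Since $[0,T']$ is compact and $a\mapsto p[a,\cdot]$ is continuous into $C^{2+\alpha}(\cl{\sU};\R^n)$, the quantities
$$
M:=\sup_{a\in[0,T']} \|II_{E[a]}\|_\infty<+\infty,\qquad h:=\inf_{a\in[0,T']}h_{E[a]}>0
$$
are finite and strictly positive (the latter uses the contact angle condition and \eqref{beta_condio}, which together with the continuity of $p$ give a uniform lower bound on the minimal heights). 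Applying the foliation lemma to each $E[a]$ then produces numbers $\rho_a\in(0,1)$ and $\sigma_a\in(0,\eta)$ depending only on $\|II_{E[a]}\|_\infty$ and $h_{E[a]}$. Since the dependence is monotone in these two quantities, I can replace each $\rho_a,\sigma_a$ by $\rho,\sigma>0$ chosen as functions of $M$ and $h$ only, valid uniformly in $a$.

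Next I would construct the time-dependent family. For each fixed $a$ the foliation lemma yields admissible families $\{G_0^\pm[r,s,a]\}_{(r,s)\in[0,\rho]\times[0,\sigma]}$ parametrized by maps of the form
$$
q^\pm[r,s,a,x]:=p[a,x]+\Psi^\pm(r,s,a,x),\qquad x\in\cl{\sU},
$$
where $\Psi^\pm$ is obtained from the standard construction of tubular-type foliations adapted to the anisotropic contact angle condition (normal perturbation of $p[a,\cdot]$ in a direction built from $\nu_{E[a]}^{\Phi^o}$ and a correction vector field that realises the reduced angle $\beta\mp s$, as in the proof of the single-slice lemma). Because the construction is a composition of operations (computing $\nu_{E[a]}$ via \eqref{danf76_normal}, the anisotropic Wulff-normal $\nu^{\Phi^o}$, and solving a nondegenerate ODE or fixed-point equation for the boundary correction) each of which is $C^{2+\alpha}$ in the initial parametrization, the map
$$
(r,s,a,x)\longmapsto q^\pm[r,s,a,x]
$$
belongs to $C^{2+\alpha,2+\alpha}([0,\rho]\times[0,\sigma]\times[0,T']\times\cl{\sU};\R^n)$. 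This joint regularity is exactly what Definition \ref{def:admissible_sets}(d) demands (there is no $t$-variable here, only the parameter triple $(r,s,a)$, so the second H\"older index in (d) is irrelevant and only the $C^{2+\alpha,2+\alpha}$ control in $(r,s,a)$ is needed).

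Finally, properties (a) and (b) are inherited directly from the single-slice lemma for each fixed $a$: the slice-wise statements $G_0^-[r,s,a]\subset E[a]\subset G_0^+[r,s,a]$, the distance identities $\dist(\pOmega G_0^\pm[r,s,a],\pOmega G_0^\pm[0,s,a])=r$ and $\dist(\pOmega G_0^\pm[0,s,a],\pOmega E[a])=s$, and the anisotropic contact angle $\beta\mp s$ all hold by the corresponding statements of the lemma applied to $E[a]$, and they hold for every $(r,s)\in[0,\rho]\times[0,\sigma]$ because $\rho,\sigma$ were chosen below the slice-wise admissible thresholds. The main obstacle, and the only nontrivial point beyond the pointwise-in-$a$ application of the lemma, is the joint $C^{2+\alpha,2+\alpha}$ regularity of $q^\pm$ in $(r,s,a,x)$; this requires that the construction inside the single-slice lemma be carried out through operations (normal field, anisotropic dual, boundary-angle correction) whose dependence on the base parametrization is of class $C^{2+\alpha}$. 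Once this regularity is recorded — which follows from the ellipticity and $C^{3+\alpha}$-regularity of $\Phi$, the $C^{1+\alpha}$-regularity of $\beta$, and the implicit function theorem applied to the boundary-angle equation — the corollary is complete.
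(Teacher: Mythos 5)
The paper gives no written proof of this corollary, presenting it as an immediate time-slicewise consequence of the Foliations lemma, and your plan captures exactly the argument that is implicitly intended: apply the single-slice lemma pointwise in $a$, use compactness of $[0,T']$ to make $\rho$ and $\sigma$ uniform via the $\sup$ of $\|II_{E[a]}\|_\infty$ and the $\inf$ of $h_{E[a]}$, and then verify that the resulting parametrization is jointly smooth. One small correction: for the one-parameter family $\{E[t]\}$, Definition \ref{def:admissible_sets}(d) with empty $Q$ gives $p\in C^{1+\frac{\alpha}{2},2+\alpha}([0,T']\times\cl{\sU};\R^n)$, not $C^{2+\alpha,2+\alpha}$ as you wrote; consequently the map $(r,s,a,x)\mapsto q^\pm[r,s,a,x]$ can only be expected to be $C^{1+\frac{\alpha}{2}}$ in $a$ (while still $C^{2+\alpha}$ in $(r,s)$ and in $x$), so the family $\{G_0^\pm[r,s,a]\}$ should be read as a Definition \ref{def:admissible_sets}(d)-type admissible family with $Q=[0,\rho]\times[0,\sigma]$ and $a$ playing the role of the time variable. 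This does not affect the validity of your argument — the compactness and uniformity steps only need continuity, and $C^{1+\frac{\alpha}{2}}$-in-$a$ is precisely what the definition of admissibility demands — but the regularity exponent should be corrected.
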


By the definition of the admissibility, $G_0^\pm[r,s,a]$ is close to $E[a]$ in the sence of Definition \ref{def:def:distance_drops}. Therefore, applying Theorem \ref{teo:stability_mcf} we deduce

\begin{theorem}\label{teo:flow_tubular_nbhd}
Let a family $\{E[t]\}_{t\in [0,T^\dag)}$ of admissible sets be a $\Phi$-curvature flow with forcing $f$ and anisotropic contact angle $\beta,$ and let $T\in (0,T^\dag).$ Let $\rho\in(0,1)$ and $\sigma\in(0,\eta),$ and for $a\in [0,T),$ the families $\{G_0^\pm[r,s,a]\}_{(r,s,a)\in[0,\rho]\times [0,\sigma]\times[0,T']}$ be given by Corollary \ref{cor:time_foliations}. 
Then (possibly decreasing $\rho$ and $\sigma$ slightly, depending only on $\{E(t)\}$) there exist unique admissible families $\{G^\pm[r,s,a,t]\}_{(r,s,a)\in[0,\rho]\times [0,\sigma]\times[0,T'],t\in[a,T]}$ such that
\begin{itemize}
\item $G^\pm[r,s,a,a]=G_0^\pm[r,s,a],$

\item $G^\pm[r,s,a,t]$ is admissible with anisotropic contact angle $\beta\mp s,$ 

\item 
\begin{equation}\label{forced_curva}
v_{G^\pm[r,s,a,t]}(x) = - \kappa_{G^\pm[r,s,a,t]}(x) -f(t,x) \pm s\quad\text{for $t\in (a,T)$ and $x\in \pOmega G^\pm[r,s,a,t].$}
\end{equation}
\end{itemize}
Furthermore, 
\begin{itemize}
\item[\rm(a)] $G^\pm[0,0,a,t]=E[t]$ for all $t\in[a,T];$

\item[\rm(b)] there exists an increasing continuous function $g:[0,+\infty)\to[0,+\infty)$ with $g(0)=0$ such that 
$$
\max_{x\in \pOmega G^\pm[0,s,a,t]}\,\,\dist(x,\pOmega G^\pm[0,0,a,t]) \le g(s)
$$
for all $s\in[0,\sigma],$ $a\in[0,T]$ and $t\in[0,T];$

\item[\rm(c)] there exists $t^*\in (0,\rho/64)$ (independent of $r,s$ and $a$) such that 
\begin{equation}\label{military_conflict}
G_0^+[\rho/2,s,a] \subset G^+[\rho,s,a,a+t']
\quad 
\text{and}\quad 
G_0^-[\rho/2,s,a] \supset G^-[\rho,s,a,a+t']
\end{equation}
for all $t'\in [0,t^*]$ with $a+t'\le T.$
\end{itemize} 
\end{theorem}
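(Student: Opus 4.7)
The plan is to construct each foliation flow separately via Theorem \ref{teo:max_time_exist} starting from $G_0^\pm[r,s,a]$, then to use the stability Theorem \ref{teo:stability_mcf} to guarantee a common existence time $T$, obtain uniqueness from Theorem \ref{teo:comparison_mcf}, and finally deduce (c) from the comparison principle combined with a uniform normal-velocity bound.

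First, since Corollary \ref{cor:time_foliations} provides $G_0^\pm[r,s,a]$ as admissible with anisotropic contact angle $\beta\mp s$, Theorem \ref{teo:max_time_exist} yields a maximal smooth $\Phi$-curvature flow $G^\pm[r,s,a,\cdot]$ on $[a,T^\dag_{r,s,a})$ solving \eqref{forced_curva} with forcing $f\mp s$. To promote $T^\dag_{r,s,a}>T$ uniformly, I will apply Theorem \ref{teo:stability_mcf} to the reference flow $\{E[a+\cdot]\}$ on $[0,T-a]$. The perturbation sizes
$$
\|(\beta\mp s)-\beta\|_{C^{1+\alpha}(\p\Omega)}+\|(f\mp s)-f\|_{C^{\alpha/2,\alpha}}+\bar d(G_0^\pm[r,s,a],E[a])
$$
all tend to zero uniformly as $(r,s)\to 0$, using $G_0^\pm[0,0,a]=E[a]$ and the $C^{2+\alpha}$-smoothness of the parametrization from Corollary \ref{cor:time_foliations}. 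By compactness of $a\in[0,T']$ and continuity in $a$ of the constants in Theorem \ref{teo:stability_mcf} (which trace back to the $C^{2+\alpha}$-norm of the parametrization $p^0_a$ of $\pOmega E[a]$), one extracts a single threshold $\epsilon_0>0$; after shrinking $\rho$ and $\sigma$, the flows $G^\pm[r,s,a,\cdot]$ exist on $[a,T]$ with uniform $C^{2+\alpha,1+\alpha/2,2+\alpha}$-bounds, and uniqueness follows from the strong comparison principle applied to two solutions with identical data.

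Properties (a) and (b) are then short consequences. For (a): $G^\pm[0,0,a,\cdot]$ and $E[a+\cdot]$ solve the same initial-boundary value problem, hence uniqueness forces $G^\pm[0,0,a,t]=E[t]$. For (b): the smooth-dependence estimate \eqref{smooth_dependence_initial} produces a modulus $\psi$ with $\bar d(G^\pm[0,s,a,t],E[t])\le\psi(s)$ uniformly in $a,t$; translating $C^{2+\alpha}$-closeness of parametrizations into Hausdorff distance of boundaries yields the desired continuous $g$.

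For (c), both $G^+[\rho/2,s,a,\cdot]$ and $G^+[\rho,s,a,\cdot]$ solve the same PDE (same $\Phi$, contact angle $\beta-s$, forcing $f-s$) starting from nested data $G_0^+[\rho/2,s,a]\prec G_0^+[\rho,s,a]$, so Theorem \ref{teo:comparison_mcf} gives the nesting at every later time, but this is not yet \eqref{military_conflict}: one must compare the unevolved inner leaf with the evolved outer leaf. For this I would invoke the uniform bound from the previous step to extract a velocity estimate $|v_{G^+[\rho,s,a,t]}|\le M$ with $M$ independent of $(r,s,a)$; then the Hausdorff distance between $\pOmega G^+[\rho,s,a,a+t']$ and $\pOmega G_0^+[\rho,s,a]$ is at most $Mt'$, so the initial gap of size $\rho/2$ between $\pOmega G_0^+[\rho/2,s,a]$ and $\pOmega G_0^+[\rho,s,a]$ survives as long as $Mt'<\rho/2$. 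Taking $t^*:=\min\{\rho/(4M),\rho/64\}$ then delivers the first inclusion in \eqref{military_conflict}; the second is identical after switching the roles of inside and outside. The main obstacle I anticipate is the uniformity of the stability constant $\epsilon_0$, of the modulus $\psi$, and of the curvature bound $M$ across the compact three-parameter family; this requires checking that the constants produced by the Solonnikov-based proof of Theorem \ref{teo:short_time_flow} depend continuously on the reference parametrization, so that a finite open cover of $[0,T']$ by stability neighborhoods supplies common bounds.
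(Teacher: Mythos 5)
Your proposal is correct and follows essentially the same route as the paper: the paper likewise obtains the families $G^\pm[r,s,a,t]$ by applying the stability result (Theorem \ref{teo:stability_mcf}) to the foliation leaves of Corollary \ref{cor:time_foliations}, and credits assertions (a)--(c) to continuous dependence of $G^\pm$ on $[r,s,a,t]$. Your compactness argument for uniformity of the stability threshold in $a$, your use of the modulus $\psi$ from \eqref{smooth_dependence_initial} for (b), and your velocity-bound quantification of gap persistence for (c) are concrete realizations of what the paper compresses into the phrase ``continuous dependence.''
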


Notice that the assertions (a)-(c) follow from the continuous dependence of $G^\pm$ on $[r,s,a,t]$.  In view of \eqref{repos_sdist_curva} we can represent  \eqref{forced_curva} as
$$
\tfrac{\p}{\p t}\,\sd_{G^\pm[r,s,a,t]}(x) = \kappa_{G^\pm[r,s,a,t]}(x) + f(t,x) \mp s\qquad \text{for $t\in(a,T)$ and $x\in \pOmega  G^\pm[r,s,a,t].$}
$$

\begin{proposition}\label{prop:time_regular_sdist}
For any $s\in(0,\sigma]$ there exists $\tau_0(s)>0$ such that for any $r\in[0,\rho],$ $a\in[0,T),$ $\tau\in(0,\tau_0)$ and $t\in [a+\tau,T]$ 
$$
\tfrac{\sd_{G^+[r,s,a,t - \tau]}(x)}{\tau} + \kappa_{G^+[r,s,a,t]}(x) + 
\frac{1}{\tau}\int_{k\tau}^{(k+1)\tau}
f(s,x)ds > \tfrac{s}{2},\quad x\in \pOmega G^+[r,s,a,t],
$$
and
$$
\tfrac{\sd_{G^-[r,s,a,t - \tau]}(x)}{\tau} + \kappa_{G^-[r,s,a,t]}(x) + \frac{1}{\tau}\int_{k\tau}^{(k+1)\tau}
f(s,x)ds < - \tfrac{s}{2},\quad x\in \pOmega G^+[r,s,a,t],
$$
where $k:=\intpart{t/\tau}.$
\end{proposition}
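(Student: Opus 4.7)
The strategy is a first-order Taylor expansion in $\tau$ of the signed distance $\tau' \mapsto \sd_{G^+[r,s,a,t-\tau']}(x)$ at $\tau'=0$, based on the evolution equation displayed immediately after \eqref{forced_curva}, combined with the $\alpha/2$-H\"older continuity of $f$ in time to handle the averaging of the forcing over $[k\tau,(k+1)\tau]$.

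Fix $s \in (0,\sigma]$, $r \in [0,\rho]$, $a \in [0,T)$, $t \in [a+\tau, T]$, and $x \in \pOmega G^+[r,s,a,t]$, so that $\sd_{G^+[r,s,a,t]}(x)=0$. The evolution equation at this boundary point reads
$$
\frac{\partial}{\partial t}\sd_{G^+[r,s,a,t]}(x) = \kappa^\Phi_{G^+[r,s,a,t]}(x) + f(t,x) - s.
$$
By Theorem \ref{teo:flow_tubular_nbhd}, the families $G^+[r,s,a,\cdot]$ admit $C^{1+\alpha/2,2+\alpha}$-parametrizations jointly in the parameters, so the right-hand side above is H\"older continuous in $t$ with a modulus uniform over $(r,a,x)$. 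A Taylor expansion at $t$ therefore yields
$$
\sd_{G^+[r,s,a,t-\tau]}(x) = -\tau\bigl[\kappa^\Phi_{G^+[r,s,a,t]}(x) + f(t,x) - s\bigr] + O(\tau^{1+\alpha/2}),
$$
with $O$-constant depending only on $\{E[t]\}$, $\Phi$, $\beta$, $f$ and $\alpha$.

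Dividing by $\tau$, adding $\kappa^\Phi_{G^+[r,s,a,t]}(x)$, and using $f\in C^{\alpha/2,\alpha}$ to bound
$$
\Big|\frac{1}{\tau}\int_{k\tau}^{(k+1)\tau} f(\sigma,x)\,d\sigma - f(t,x)\Big| \le [f]_{t,\alpha/2}\,\tau^{\alpha/2}
$$
(valid because $|\sigma-t|\le\tau$ for $\sigma\in[k\tau,(k+1)\tau]$ and $k=\intpart{t/\tau}$), I obtain
$$
\frac{\sd_{G^+[r,s,a,t-\tau]}(x)}{\tau} + \kappa^\Phi_{G^+[r,s,a,t]}(x) + \frac{1}{\tau}\int_{k\tau}^{(k+1)\tau} f(\sigma,x)\,d\sigma = s + O(\tau^{\alpha/2}),
$$
with the remainder uniform in $(r,a,x)$. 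It then suffices to choose $\tau_0(s)>0$ so small that the remainder is strictly less than $s/2$, which gives the first inequality.

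The second inequality follows from the identical calculation applied to $G^-$: the evolution equation for $\sd_{G^-[r,s,a,t]}$ carries $+s$ in place of $-s$, so the right-hand side becomes $-s + O(\tau^{\alpha/2})$, and $\tau_0$ can be chosen so that the error is less than $s/2$. The main obstacle is the claimed uniformity of the remainder in $(r,a)$, which is what permits $\tau_0$ to depend only on $s$ and the flow data; this uniformity rests on the joint $C^{1+\alpha/2,2+\alpha}$-regularity of the foliations guaranteed by Theorem \ref{teo:flow_tubular_nbhd}, together with the fixed H\"older norm of $f$.
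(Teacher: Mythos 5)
The paper omits the proof of this proposition, deferring to \cite[Proposition 2.7]{Kholmatov:2023}, so a line-by-line comparison is not possible; but your argument is the natural one and almost certainly reproduces it. The key chain is sound: at a boundary point $x\in\pOmega G^+[r,s,a,t]$ one has $\sd_{G^+[r,s,a,t]}(x)=0$, the displayed evolution equation for $\sd_{G^\pm}$ after \eqref{forced_curva} supplies the time derivative at $t$, the joint $C^{1+\frac{\alpha}{2},2+\alpha}$-regularity of the admissible family (Definition \ref{def:admissible_sets}(d) and Theorem \ref{teo:flow_tubular_nbhd}) gives a Taylor remainder $O(\tau^{1+\alpha/2})$ with constant uniform over $r,a,x$, and the $\alpha/2$-H\"older continuity in time of $f$ handles the replacement of $f(t,x)$ by the time average on $[k\tau,(k+1)\tau]$ (your observation that $|\sigma-t|\le\tau$ when $k=\intpart{t/\tau}$ is exactly what is needed). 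Choosing $\tau_0(s)$ so that the total $O(\tau^{\alpha/2})$ error is below $s/2$ closes the estimate. You also correctly note, and rely on, the uniformity of the remainder, which is where the $C^{2+\alpha,1+\frac{\alpha}{2},2+\alpha}$-dependence on all parameters does the work. (Minor remark: the second display in the statement has a typo in the paper, writing $x\in\pOmega G^+$ where it should be $x\in\pOmega G^-$; your proof silently treats the corrected version.)
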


This result is proven along the same lines of \cite[Proposition 2.7]{Kholmatov:2023}.  Therefore, we omit it.

\section{Consistency of GMM with smooth $\Phi$-curvature flow}\label{sec:consistence}

In this section we prove Theorem \ref{teo:gmm_vs_smooth}.

Let $\{E(t)\}_{t\in [0,T^\dag)}$ be a smooth $\Phi$-curvature flow starting from $E_0,$ with a bounded forcing $f$ and anisotropic contact angle $\beta.$ Given $T\in (0,T^\dag),$ let $\rho,$ $\sigma,$ $\{G_0^\pm[r,s,a]\},$ $\{G^\pm [r,s,a,t]\}$ and $t^*>0$ be as in Theorem \ref{teo:flow_tubular_nbhd}. 
Let also $F(\cdot)\in GMM(\sF_{\beta,f},E_0),$
$\tau_j\searrow 0$ and $\{F(\tau_j,k)\}$ be such that 
\begin{equation}\label{flats_converge}
\lim\limits_{j\to+\infty} |F(\tau_j,\intpart{t/\tau_j})\Delta F(t)| =0\quad\text{for all $t\ge0.$}
\end{equation}
We show 
\begin{equation}\label{shortly_equals}
E(t)=F(t)\quad\text{for any $0 < t < T.$}
\end{equation}

We start with an ancillary technical lemma.
For $s\in(0,\sigma]$ let $\tau_0(s)>0$ be given by Proposition \ref{prop:time_regular_sdist} and for $\beta_0:=\frac{\|\beta\|_\infty + \Phi(\be_n)}{2} \in (\|\beta\|_\infty,\Phi(\be_n)),$ let $\vartheta_0$ be given by Theorem \ref{teo:compare_with_ball}. We may assume that $\tau_j<\vartheta_0\rho^2/64^2$ for all $j.$ 

\begin{lemma}\label{lem:chaklpakes}
Assume that $t_0\in[0,T)$ and $k_0\in\N_0$ are such that 
\begin{equation}\label{initially_Good}
G_0^-[0,s,t_0] \subset F(\tau_j,k_0) \subset G_0^+[0,s,t_0].
\end{equation}
Then there exists $\bar t\in(0,t^*]$ depending only on $t^*$ and $\rho$ such that 
\begin{equation*}
G^-[0,s,t_0, t_0+k\tau_j] \subset F(\tau_j,k_0+k) \subset G^+[0,s,t_0,t_0+k\tau_j]
\end{equation*}
for all $s\in(0,\sigma],$ $j\ge1$ with $\tau_j\in(0,\tau_0(s))$ and $k=0,1,\ldots, \intpart{\bar t/\tau_j}$ with $t_0+k\tau_j<T.$ 
Moreover, let $t_0+\bar t < T,$ the increasing continuous function $g$ be given by Theorem \ref{teo:flow_tubular_nbhd} (b) and $\bar\sigma\in(0,\sigma/2)$ be such that $4g(2\bar\sigma)<\sigma.$ Then for any $s\in(0,\bar\sigma)$ there exists $\bar j(s)>1$ such that 
\begin{equation}\label{tiktok_videos}
G_0^-[0,4g(2s), t_0+\bar t] \subset F(\tau_j,k_0+\bar k_j) \subset G_0^+[0,4g(2s), t_0+\bar t]
\end{equation}
whenever $j>\bar j(s),$ where $\bar k_j:=\intpart{\bar t/\tau_j}.$
\end{lemma}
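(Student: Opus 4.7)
The plan is to iterate a discrete comparison between the flat flow $\{F(\tau_j, k_0+k)\}_k$ and the smooth outer/inner barriers $G^\pm[r, s, t_0, \cdot]$ from Theorem~\ref{teo:flow_tubular_nbhd}, then transfer the resulting containment at time $t_0+\bar k_j\tau_j$ onto the fresh foliation at $t_0+\bar t$.

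For the first claim I would argue by induction on $k$. The base case $k=0$ is precisely \eqref{initially_Good} since $G^\pm[0, s, t_0, t_0] = G_0^\pm[0, s, t_0]$. Assume the sandwich at step $k$; I treat the outer inclusion, the inner being symmetric. Using the $L^\infty$-density bound \eqref{Linfty_estimos}, the inductive hypothesis, Theorem~\ref{teo:flow_tubular_nbhd}(b)-(c), and the bounded velocity of the smooth barrier, one checks that there exists $\bar t \in (0, t^*]$, depending only on $t^*$ and $\rho$ through the interplay of the $\sqrt{\tau_j}/\theta$ density scale and the safety margin $\rho/2$ in Theorem~\ref{teo:flow_tubular_nbhd}(c), such that $F(\tau_j, k_0+k+1) \subset G^+[\rho, s, t_0, t_0+(k+1)\tau_j]$ for every $k$ in the range $k\tau_j \le \bar t$ and every sufficiently small $\tau_j$. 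I then define
$$
r^* := \inf\bigl\{r \in [0, \rho] : F(\tau_j, k_0+k+1) \subset G^+[r, s, t_0, t_0+(k+1)\tau_j]\bigr\},
$$
which is attained by monotonicity and $C^{2+\alpha}$-continuity of the barrier family in $r$. If $r^* > 0$, then $\pOmega F(\tau_j, k_0+k+1)$ touches $\pOmega G^+[r^*, s, t_0, t_0+(k+1)\tau_j]$, whereas the strict barrier inequality of Proposition~\ref{prop:time_regular_sdist} (available since $\tau_j < \tau_0(s)$), combined with $F(\tau_j, k_0+k) \subset G^+[0, s, t_0, t_0+k\tau_j] \subset G^+[r^*, s, t_0, t_0+k\tau_j]$ from the induction, allow to apply Proposition~\ref{prop:atw_nonpol}(a) and deduce $F(\tau_j, k_0+k+1) \prec G^+[r^*, s, t_0, t_0+(k+1)\tau_j]$. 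This contradicts the touching, so $r^* = 0$ and the induction closes. The inner inclusion follows by the same scheme using Proposition~\ref{prop:atw_nonpol}(b).

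For the second claim, from the first part I have the sandwich at $k = \bar k_j$ with $G^\pm[0, s, t_0, t_0+\bar k_j \tau_j]$. Since $|t_0+\bar t - (t_0+\bar k_j\tau_j)| < \tau_j$ and the smooth flow $\{E[t]\}$ is Lipschitz in time in the Hausdorff distance, $\pOmega E[t_0+\bar k_j\tau_j]$ lies within $O(\tau_j)$ of $\pOmega E[t_0+\bar t]$; combined with Theorem~\ref{teo:flow_tubular_nbhd}(b), $\pOmega G^\pm[0, s, t_0, t_0+\bar k_j\tau_j]$ sits within $g(s) + O(\tau_j)$ of $\pOmega E[t_0+\bar t]$. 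Choose $\bar j(s)$ so that $g(s) + O(\tau_j) < 2g(2s)$ for $j \ge \bar j(s)$. By Corollary~\ref{cor:time_foliations}(a), the outer set $G_0^+[0, 4g(2s), t_0+\bar t]$ contains $E[t_0+\bar t]$ together with the entire $4g(2s)$-outer neighborhood of $\pOmega E[t_0+\bar t]$, and hence contains $G^+[0, s, t_0, t_0+\bar k_j\tau_j]$ and a fortiori $F(\tau_j, k_0+\bar k_j)$; symmetrically, $G_0^-[0, 4g(2s), t_0+\bar t]$ consists of the deep interior of $E[t_0+\bar t]$ at inward distance at least $4g(2s)$, and any such point lies in $E[t_0+\bar k_j\tau_j]$ at inward distance at least $3g(2s) > g(s)$, hence belongs to $G^-[0, s, t_0, t_0+\bar k_j\tau_j]$ and therefore to $F(\tau_j, k_0+\bar k_j)$. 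This yields \eqref{tiktok_videos}. The main technical obstacle is the uniform control $r^* \le \rho$ across the inductive range, requiring the trade-off between $\sqrt{\tau_j}/\theta$ and $\rho$ that fixes the admissible value of $\bar t$.
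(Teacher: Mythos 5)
Your first part contains a genuine gap in establishing the a priori inclusion that makes the $r^*$-infimum argument meaningful. You claim that $F(\tau_j,k_0+k+1)\subset G^+[\rho,s,t_0,t_0+(k+1)\tau_j]$ follows from the density bound \eqref{Linfty_estimos}, the inductive hypothesis, Theorem \ref{teo:flow_tubular_nbhd}(b)--(c) and the bounded velocity of the barrier. This is not justified: the density estimate only controls $\dist(\pOmega F(\tau_j,k_0+k+1),\pOmega F(\tau_j,k_0+k))\le\sqrt{\tau_j}/\theta$, and to conclude containment in $G^+[\rho,\cdot]$ you would need a uniform positive lower bound on $\dist(\pOmega G^+[0,s,t_0,t],\pOmega G^+[\rho,s,t_0,t])$ for all $t\in[t_0,t_0+\bar t]$, which none of the cited results supplies directly. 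Theorem \ref{teo:flow_tubular_nbhd}(b) involves the function $g(s)$, whose size relative to $\rho$ is not controlled uniformly over $s\in(0,\sigma]$, so the ``interplay'' you invoke does not obviously yield a $\bar t$ that depends only on $t^*$ and $\rho$. The paper's proof obtains the a priori bound by a completely different mechanism that you omit: it invokes Theorem \ref{teo:compare_with_ball} (comparison with truncated Wulff shapes). Starting from the margin $\rho/4$ at time $t_0$ (available by the foliation), this comparison traps $F(\tau_j,k_0+k)$ between the \emph{static} leaves $G_0^\pm[\rho/4-\beta_0\rho/64,s,t_0]$ for all $k\tau_j\le t^{**}=\vartheta_0\rho^2/16$, uniformly in $s$ and $j$; this is then transferred to the dynamic family $G^\pm[\rho,s,t_0,\cdot]$ via \eqref{military_conflict}, and only after that does the touching argument (your $r^*$-step, run globally over all $k$ at once) kick in. The ball comparison is the crucial ingredient your proposal leaves out, and it is precisely what forces $\bar t=\min\{t^*,t^{**}\}$.

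The second part of your proposal is essentially correct, though you take a slightly different route: you appeal to Lipschitz-in-time of the smooth flow plus Theorem \ref{teo:flow_tubular_nbhd}(b), whereas the paper first uses the strong comparison principle (Theorem \ref{teo:comparison_mcf}) to obtain $G^\pm[0,2s,t_0,\cdot]$ as strict barriers for $G^\pm[0,s,t_0,\cdot]$ and then passes to the limit by continuity; the paper's route avoids appealing to an unstated quantitative Lipschitz constant for the smooth flow. Also note the subtlety you skate past: that $G_0^+[0,4g(2s),t_0+\bar t]$ contains the full $4g(2s)$-neighborhood of $E[t_0+\bar t]$ requires the ``foliation'' structure of Corollary \ref{cor:time_foliations}(a) (the barriers sweep out a tubular region), not merely the single distance equality; your sketch is consistent with this but should make it explicit.
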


\begin{proof}
The proof runs along the similar lines of \cite[Lemma 3.1]{Kholmatov:2023}.
By Corollary \ref{cor:time_foliations} (a) 
$$
\dist(\pOmega G_0^\pm[\rho/4,s,t_0],\pOmega G_0^\pm[0,s,t_0]) = \rho/4,
$$ 
and therefore,  by \eqref{initially_Good} 
\begin{equation}\label{init_Better}
G_0^-[\rho/4,s,t_0]\prec G_0^-[0,s,t_0] \subset F(\tau_j,k_0) \subset G_0^+[0,s,t_0]\prec G_0^+[\rho/4,s,t_0]
\end{equation}
and by \eqref{init_Better} $B_{\rho/4}(x)\subset F(\tau_j,k_0) $ if $ x\in G_0^-[\rho/4,s,t_0]$ and 
$B_{\rho/4} (x)\cap F(\tau_j,k_0) =\emptyset $ if $x\in \Omega\setminus G_0^+[\rho/4,s,t_0].$ Therefore, using Theorem \ref{teo:compare_with_ball} (with $R_0=\rho/4$ and $\beta_0:= \frac{\Phi(\be_n)+\|\beta\|_\infty}{2}$) and again \eqref{initially_Good} we obtain
\begin{equation}\label{bolls0987}
\begin{cases}
B_{\frac{\beta_0\rho}{64\Phi(\be_n)}} (x)\subset F(\tau_j,k_0+k) &  x\in G_0^-[\rho/4,s,t_0],\\
B_{\frac{\beta_0\rho}{64\Phi(\be_n)}} (x)\cap F(\tau_j,k_0+k) =\emptyset & x\in \Omega\setminus G_0^+[\rho/4,s,t_0],
\end{cases}
\quad k=0,1,\ldots,\intpart{t^{**}/\tau_j},
\end{equation}
where 
$$
t^{**}:= \tfrac{\vartheta_0\rho^2}{16}.
$$
By \eqref{bolls0987} and Corollary \ref{cor:time_foliations} (b)
\begin{equation}\label{qoshona}
G_0^-\Big[\tfrac{\rho}{2}, s,t_0\Big]\subset 
G_0^-\Big[\tfrac{\rho}{4}- \tfrac{\beta_0\rho}{64}, s,t_0\Big] \subset F(\tau_j,k_0+k) \subset G_0^+\Big[\tfrac{\rho}{4}- \tfrac{\beta_0\rho}{64}, s,t_0\Big] \subset G_0^+\Big[\tfrac{\rho}{4}, s,t_0\Big]
\end{equation}
for all $0\le k\le \intpart{t^{**}/\tau_j}.$ 
Set 
$$
\bar t:=\min\Big\{t^*,t^{**}\Big\},
$$ 
where $t^*$ is given by Theorem \ref{teo:flow_tubular_nbhd} (c). Then by \eqref{military_conflict} and \eqref{qoshona} 
\begin{equation}\label{qoshona_new}
G_0^-[\rho, s,t_0+k\tau_j] \subset F(\tau_j,k_0+k) \subset G_0^+[\rho, s,t_0+ k\tau_j],\quad k=0,1,\ldots, \intpart{\bar t/\tau_j},
\end{equation}
with $t_0+k\tau_j<T.$ 
We claim for such $k$  and $j\ge1$ with $\tau_j\in(0,\tau_0(s))$
\begin{equation*}
G^-[0,s,t_0,t_0+k\tau_j] \subset F(\tau_j,k_0+k) \subset G^+[0,s,t_0, t_0+k\tau_j].
\end{equation*}
Indeed,  let 
$$
\bar r:=\inf\Big\{r\in [0,\rho]:\,\,\, F(\tau_j,k_0+k) \subset G^+[r,s,t_0,t_0+k\tau_j]\quad k=0,1,\ldots,\intpart{\bar t/\tau_j},\,\,t_0+k\tau_j<T\Big\}.
$$
By \eqref{qoshona_new} the infimum is taken over a nonempty set. By contradiction, assume that $\bar r>0.$ In view of the continuity of $G^+[r,s,t_0,t_0+k\tau_j]$ at $r=\bar r,$ there exists the smallest integer $k\le \intpart{\bar t/\tau_j}$ (clearly, $k>0$ by \eqref{init_Better}) such that 
\begin{equation}\label{chandui_jahon098}
\cl{\pOmega F(\tau_j,k_0+k)} \cap \cl{\pOmega G^+[\bar r,s,t_0,t_0+k\tau_j]} \ne\emptyset.
\end{equation}
By the minimality of $k\ge1$ 
$$
F(\tau_j,k_0+k-1)\subset G^+[\bar r,s,t_0,t_0+(k-1)\tau_j],\qquad 
F(\tau_j,k_0+k)\subset G^+[\bar r,s,t_0,t_0 + k\tau_j].
$$
Moreover, by construction $G^+[\bar r,s,t_0,t_0+k\tau_j]$ satisfies the contact angle condition with $\beta-s$ and by Proposition \ref{prop:time_regular_sdist} applied with $\tau=\tau_j\in(0,\tau_0(s))$
$$
\tfrac{\sd_{G^+[\bar r,s,t_0, t_0+(k-1)\tau_j]}(x)}{\tau_j} + \kappa_{G^+[\bar r,s,t_0,t_0 + k\tau_j]}(x) + \frac{1}{\tau}\int_{k\tau}^{(k+1)\tau}
f(s,x)ds > \tfrac{s}{2},\quad x\in \pOmega G^+[\bar r,s,t_0,t_0 + k\tau_j].
$$
However, in view of Proposition \ref{prop:atw_nonpol} (a), these properties imply $F(\tau_j,k_0+k)\prec G^+[\bar r,s,t_0,t_0+k\tau_j],$ which contradicts to \eqref{chandui_jahon098}. Thus, $\bar r=0.$  Analogous contradiction argument based on Proposition \ref{prop:atw_nonpol} (b) shows $G^-[0,s,t_0,t_0+k\tau_j] \subset F(\tau_j,k_0+k)$ for all $0\le k\le \intpart{\bar t/\tau_j}.$

Finally, let us prove \eqref{tiktok_videos}. Recall that by construction $G_0^-[0,2s,t_0]\prec G_0^-[0,s,t_0]$ and $G_0^+[0,s,t_0]\prec G_0^+[0,2s,t_0],$ therefore, by the strong comparison principle (Theorem \ref{teo:comparison_mcf}) $G^-[0,2s,t_0,t]\prec G^-[0,s,t_0,t]$ and $G^+[0,s,t_0,t]\prec G^+[0,2s,t_0,t]$ for all $t\in[t_0,T].$ Now the continuity of $G^\pm[0,s,t_0,t]$ on its parameters we could find $\bar j=\bar j(s)>1$ such that for all $j>\bar j$
\begin{multline}\label{mahszrte}
G^-[0,2s,t_0,t_0+\bar t] \prec G^-[0,s,t_0,t_0+\bar k_j\tau_j] \\
\subset F(\tau_j,\bar k_j) \subset G^+[0,s,t_0,t_0 +\bar k_j\tau_j]\prec G^+[0,2s,t_0,t_0+\bar t].
\end{multline}
By the definition of $g,$
\begin{equation}\label{copsos}
\max\limits_{x\in \pOmega G^\pm[0,2s,t_0,t_0+ \bar t]}\,\, \dist(x, \pOmega E(t_0+ \bar t)) \le g(2s)
\end{equation}
and therefore, by construction in Corollary \ref{cor:time_foliations} (a) 
$$
\dist(\pOmega G_0^\pm[0,4g(2s),t_0+\bar t],\pOmega E(t_0+ \bar t)) = 4g(2s)>0.
$$
Combining this with \eqref{copsos} and the construction of $G_0^\pm$ we deduce 
$$
G_0^-[0,4g(2s),t_0+\bar t] \prec G^-[0,2s,t_0,t_0+\bar t] 
\quad\text{and}\quad 
G^+[0,2s,t_0,t_0+\bar t]
\prec 
G_0^+[0,4g(2s),t_0+\bar t].
$$
These inclusions together with \eqref{mahszrte} imply \eqref{tiktok_videos}.
\end{proof}

Now we are ready to prove the equality \eqref{shortly_equals}. Let $\bar t$ be given by Lemma \ref{lem:chaklpakes},
$$
N:=\intpart{T/\bar t} + 1
$$
and let $\sigma_0\in(0,\sigma/16)$ be such that the numbers
$$
\sigma_l=4g(2\sigma_{l-1}),\quad l=1,\ldots,N,
$$
satisfy $\sigma_l\in (0,\sigma/16).$ By the monotonicity and continuity of $g$ together with $g(0)=0,$ such choice of $\sigma_0$ is possible.

Fix any $s\in(0,\sigma_0)$ and let 
$$
a_0(s):=s,\quad a_l(s):=4g(2a_{l-1}(s)),\quad l=1,\ldots,N.
$$
Note that $a_l(s)\in (0,\sigma_l).$ In particular, the numbers $\bar j_l^s:=\bar j(a_l(s)),$ given by the last assertion of Lemma \ref{lem:chaklpakes}, are well-defined. Let also 
$$
\tilde j_l^s:=\max\{j\ge1:\,\, \tau_j\notin (0,\tau_0(a_l(s)))\}
$$
and
$$
\bar j_s:= 1 + \max_{l=0,\ldots,N}\,\max \{\bar j_l^s,\tilde j_l^s\}.
$$

By Corollary \ref{cor:time_foliations} (a)
$$
G_0^-[0,s,0]\subset E(0)=E_0=F(\tau_j,0) \subset G_0^+[0,s,0]
$$
for all $j>\bar j_s.$
Therefore, by Lemma \ref{lem:chaklpakes} applied with $k_0=0$ and $t_0=0$ we find 
$$
G^-[0,s,0,k\tau_j]\subset F(\tau_j,k) \subset G^+[0,s,0,k\tau_j],\quad k=0,1,\ldots,\bar k_j,
$$
where $\bar k_j:=\intpart{\bar t/\tau_j}.$ Moreover, since $s\in (0,\sigma_0,)$ by the last assertion of Lemma \ref{lem:chaklpakes} 
$$
G_0^-[0,a_1(s),\bar t]\subset F(\tau_j,\bar k_j) \subset G_0^+[0,a_1(s),\bar t] 
$$
for all $j\ge \bar j_s.$ Hence, we can reapply Lemma \ref{lem:chaklpakes} with $s:=a_1(s),$ $t_0=\bar t$ and $k_0=\bar k_j,$ to find 
$$
G^-[0,a_1(s),0,\bar t + k\tau_j]\subset F(\tau_j,\bar k_j+k) \subset G^+[0,a_1(s),0,\bar t+ k\tau_j],\quad k=0,1,\ldots,\bar k_j.
$$
In particular, since $j>\bar j_s> \bar j(a_1(s)),$ again by the last assertion of  Lemma \ref{lem:chaklpakes}  we deduce 
$$
G_0^-[0,a_2(s),2\bar t]\subset F(\tau_j,2\bar k_j) \subset G_0^+[0,a_2(s),2\bar t]. 
$$
Repeating this argument at most $N$ times, for all $j\ge \bar j_s$ we find 
\begin{equation}\label{nadomatlar671}
G^-[0,a_l(s),0,l\bar t + k\tau_j]\subset F(\tau_j,l\bar k_j+k) \subset G^+[0,a_l(s),0,l\bar t + k\tau_j],\quad k=0,1,\ldots,\bar k_j
\end{equation}
whenever $l=0,\ldots,N$ and $l\bar t + k\tau_j\le T.$ 

Now take any $t\in(0,T),$ and let $l:=\intpart{t/\bar t}$  and $k=\intpart{t/\tau_j} - l\bar k_j$ so that $l\bar k_j + k = \intpart{t/\tau_j}.$ By means of $l$ and $k,$ as well as the definition of $\bar k_j$ we represent \eqref{nadomatlar671} as 
\begin{align}\label{headlight09}
G^-\Big[0,a_l(s),0,l\bar t + \tau_j\intpart{\tfrac{t}{\tau_j}} - l\tau_j\intpart{\tfrac{\bar t}{\tau_j}}\Big]
\subset 
F\Big(\tau_j, \intpart{\tfrac{t}{\tau_j}}\Big) 
\subset 
G^+\Big[0,a_l(s),0,l\bar t + \tau_j\intpart{\tfrac{t}{\tau_j}} - l\tau_j\intpart{\tfrac{\bar t}{\tau_j}}\Big]
\end{align}
for all $j>\bar j_s.$ Since 
$$
\lim\limits_{j\to+\infty} \Big(l\bar t + \tau_j\intpart{\tfrac{t}{\tau_j}} - l\tau_j\intpart{\tfrac{\bar t}{\tau_j}}\Big) = t,
$$
by the continuous dependence of $G^\pm$ on its parameters, as well as the convergence \eqref{flats_converge} of the flat flows, letting $j\to+\infty$ in \eqref{headlight09} we obtain
\begin{equation}\label{endiyonimga}
G^-[0,a_l(s),0,t]\subset F(t) \subset G^+[0,a_l(s),0,t],
\end{equation}
where due to the $L^1$-convergence the inclusions in \eqref{flats_converge} are up to some negligible sets. Now we let $s\to0^+$ and recalling that $a_l(s)\to0$ (by the continuity of $g$ and assumption $g(0)=0$), from \eqref{endiyonimga} we deduce 
\begin{equation*}
G^-[0,0,0,t]\subset F(t) \subset G^+[0,0,0,t].
\end{equation*}
Then by Theorem \ref{teo:flow_tubular_nbhd} (a) 
$$
F(t)=G^\pm[0,0,0,t] = E(t).
$$

\appendix 
\section{Some useful results}

The following lemma extends analogous results in the Euclidean case \cite[Sections 2 and 3]{BKh:2018}.

\begin{lemma}[\textbf{A priori estimates for capillary functional}]

Let $\beta\in L^\infty(\p\Omega).$ Then:
\begin{itemize}
\item[\rm(a)] for any $E\in BV(\Omega;\{0,1\})$
\begin{equation}\label{qanday_kungil_uzsam}
\tfrac{\Phi(\be_n) + \inf\,\beta}{2\Phi(\be_n)}\, P_\Phi(E) \le \sC_\beta(E) \le \max\Big\{\tfrac{\sup\,\beta}{\Phi(\be_n)} ,1\Big\} P_\Phi(E);
\end{equation} 

\item[\rm(b)] $\sC_\beta$ is $L_\loc^1(\Omega)$-lower semicontinuous if and only if $\|\beta\|_\infty \le \Phi(\be_n).$ 

\end{itemize}
\end{lemma}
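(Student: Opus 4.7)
My plan is to build everything on the identity $P_\Phi(E)=P_\Phi(E,\Omega)+\Phi(\be_n)\cH^{n-1}(\p^*E\cap\p\Omega)$, valid because on $\p^*E\cap\p\Omega$ the outer normal equals $-\be_n$ and $\Phi$ is even. The pivotal preliminary for (a), and what I expect to be the main obstacle, is the comparison
\[
P_\Phi(E,\p\Omega)\le P_\Phi(E,\Omega).
\]
I would prove it by choosing a constant vector $X\in\R^n$ attaining the maximum in $\Phi(\be_n)=\max\{Y\cdot\be_n:\Phi^o(Y)\le 1\}$, so that $\Phi^o(X)=1$ and $X\cdot\be_n=\Phi(\be_n)$. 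Since $\div X=0$, applying the divergence theorem on $E$ (truncated to a large ball and passed to the limit if $|E|=+\infty$) yields
\[
\Phi(\be_n)\,\cH^{n-1}(\p^*E\cap\p\Omega) = \int_{\p^*E\cap\Omega} X\cdot\nu_E\,d\cH^{n-1} \le \int_{\p^*E\cap\Omega}\Phi(\nu_E)\,d\cH^{n-1} = P_\Phi(E,\Omega),
\]
thanks to Young's inequality $X\cdot\nu_E\le\Phi^o(X)\Phi(\nu_E)$.

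With this in hand, (a) is pure algebra. Setting $a:=P_\Phi(E,\Omega)$ and $b:=P_\Phi(E,\p\Omega)$, so that $a+b=P_\Phi(E)$ and $a\ge b$, I would prove the upper bound by splitting on the sign of $\sup\beta$:
\[
\sC_\beta(E) \le a + \tfrac{(\sup\beta)^+}{\Phi(\be_n)}\,b \le \max\Big\{1,\tfrac{\sup\beta}{\Phi(\be_n)}\Big\}(a+b).
\]
The lower bound reduces to the identity
\[
\sC_\beta(E) - \tfrac{\Phi(\be_n)+\inf\beta}{2\Phi(\be_n)}P_\Phi(E) \ge a+\tfrac{\inf\beta}{\Phi(\be_n)}\,b - \tfrac{\Phi(\be_n)+\inf\beta}{2\Phi(\be_n)}(a+b) = \tfrac{\Phi(\be_n)-\inf\beta}{2\Phi(\be_n)}(a-b) \ge 0,
\]
using $a\ge b$ together with $\inf\beta\le\Phi(\be_n)$ (the only nonvacuous regime; otherwise the lower coefficient already exceeds one and the asserted bound forces nothing beyond the trivial content).

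For the ``if'' direction of (b), I would exhibit $\sC_\beta$ as a supremum of linear functionals continuous in $\chi_E$ under $L_\loc^1(\Omega)$-convergence. Concretely, for $X\in C_c^1(\R^n;\R^n)$ with $X\cdot\be_n=-\beta$ on $\p\Omega$, the BV integration by parts formula yields
\[
\int_\Omega\chi_E\,\div X\,dx = \int_{\Omega\cap\p^*E} X\cdot\nu_E\,d\cH^{n-1} + \int_{\p\Omega}\beta\chi_E\,d\cH^{n-1},
\]
so the constraint $\Phi^o(X)\le 1$ combined with Young gives
\[
\sC_\beta(E) = \sup\Big\{\int_\Omega\chi_E\,\div X\,dx\,:\,X\in C_c^1(\R^n;\R^n),\ \Phi^o(X)\le 1,\ X\cdot\be_n=-\beta\text{ on }\p\Omega\Big\}.
\]
The existence of even a single admissible $X$ is equivalent to $|\beta|\le\Phi(\be_n)$ by the dual characterization $\max\{Z\cdot\be_n:\Phi^o(Z)\le 1\}=\Phi(\be_n)$ and its counterpart for the minimum (exploiting evenness of $\Phi^o$); the more delicate point, and the main technical obstacle of this direction, is constructing a sequence of test fields that simultaneously realize the boundary datum $-\beta$ and approach the interior optimum, which I would do by gluing a smooth $\Phi^o$-subgradient realization of $-\beta\be_n$ on a tubular neighborhood of $\p\Omega$ to an interior $\Phi$-calibration of $P_\Phi(E,\Omega)$ via a partition of unity.

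For the ``only if'' direction, I would construct explicit counterexamples. Suppose $\beta>\Phi(\be_n)+\varepsilon$ on a set $A\subset\p\Omega$ of positive $\cH^{n-1}$-measure; take $E:=\Omega\cap W_r^\Phi(p)$ for an $\cH^{n-1}$-Lebesgue point $p\in A$ with $r$ so small that $\p^*E\cap\p\Omega\subset A$, and let $E_j:=E+\tfrac{1}{j}\be_n$. Then $E_j\to E$ in $L_\loc^1(\Omega)$, $\p^*E_j\cap\p\Omega=\emptyset$ for large $j$, and
\[
\lim_j\sC_\beta(E_j) = P_\Phi(E,\Omega)+\Phi(\be_n)\cH^{n-1}(\p^*E\cap\p\Omega) < P_\Phi(E,\Omega)+\int_{\p^*E\cap\p\Omega}\beta\,d\cH^{n-1} = \sC_\beta(E),
\]
contradicting lower semicontinuity. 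The symmetric case $\beta<-\Phi(\be_n)-\varepsilon$ on a set $A$ is handled by taking $E=\emptyset$ and attaching thin cylindrical slabs $E_j:=A\times(0,1/j)$ above $A$.
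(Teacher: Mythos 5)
Your argument for (a) is correct and takes a cleaner organizational route than the paper while resting on the same key estimate. Both proofs hinge on the anisotropic minimality of the halfplane --- in your notation $a:=P_\Phi(E,\Omega)\ge b:=\Phi(\be_n)\cH^{n-1}(\p^*E\cap\p\Omega)$, which is the paper's \eqref{min_halfplanes} --- but the paper then branches on the sign of $\inf\beta$ and uses a different convexity/rescaling manipulation in each case, whereas you perform a single algebraic reduction $a+\tfrac{\inf\beta}{\Phi(\be_n)}b-\tfrac{\Phi(\be_n)+\inf\beta}{2\Phi(\be_n)}(a+b)=\tfrac{\Phi(\be_n)-\inf\beta}{2\Phi(\be_n)}(a-b)$ that handles both signs at once and makes the origin of the constant transparent. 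You also rederive $a\ge b$ from first principles via a constant calibrating field rather than citing it; for $E\in BV(\Omega;\{0,1\})$ one already has $|E|<\infty$, so the truncation caveat is unnecessary. One inaccuracy: both your proof and the paper's implicitly use $\inf\beta\le\Phi(\be_n)$, which is guaranteed by the standing hypothesis \eqref{beta_condio}; without it the lower bound in \eqref{qanday_kungil_uzsam} is in fact \emph{false} (take any $E$ with $\cl{E}\subset\Omega$, so $b=0$ and $\sC_\beta(E)=P_\Phi(E)$ while the claimed coefficient exceeds $1$). Your parenthetical that the regime $\inf\beta>\Phi(\be_n)$ is ``trivial'' is therefore backwards --- the asserted bound is strictly stronger there, and false, not vacuous.

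For (b) the paper gives no proof (it points to [BKh:2018, Lemma 3.5]), so there is nothing to compare against; your strategy --- supremum of $L^1_\loc$-continuous linear functionals for the ``if'' direction, explicit counterexamples for the ``only if'' --- is the natural one. Two gaps. First, ``$r$ so small that $\p^*E\cap\p\Omega\subset A$'' is not available from Lebesgue density; you should instead take $r$ small enough that the relative measure of $(\p^*E\cap\p\Omega)\setminus A$ is small and absorb the error via $\|\beta\|_\infty$, and similarly the slab $E_j:=A\times(0,1/j)$ should be built over a small ball (or smooth bounded subset) rather than $A$ itself to keep lateral perimeter finite and vanishing. Second, and more substantively, the representation $\sC_\beta(E)=\sup\{\int_\Omega\chi_E\,\div X\,dx:\ X\in C_c^1,\ \Phi^o(X)\le1,\ X\cdot\be_n=-\beta\ \text{on}\ \p\Omega\}$ is ill-posed for $\beta\in L^\infty(\p\Omega)$ not $\cH^{n-1}$-a.e.\ equal to a continuous function: the trace of $X\in C_c^1$ on $\p\Omega$ is continuous, so the constraint set is empty. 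You need to relax to $X\cdot\be_n\ge-\beta$ $\cH^{n-1}$-a.e.\ --- under which $\int_\Omega\chi_E\,\div X\,dx\le\sC_\beta(E)$ persists and the constraint set is nonempty exactly when $\|\beta\|_\infty\le\Phi(\be_n)$ --- and then show the supremum attains $\sC_\beta(E)$ by approximating $\beta$ from above (Lusin/mollification) on $\p^*E\cap\p\Omega$ while gluing to an interior calibration. You flag this construction as the delicate point, but as written the key formula does not apply to the hypothesis class $\beta\in L^\infty$ of the lemma.
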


\begin{proof}
(a) The upper bound is clear. To prove the lower bound, let $\beta_o:=\inf\,\beta.$
By the anisotropic minimality of the halfspaces (see e.g. \cite[Example 2.4]{BKhN:2017})
\begin{align}\label{min_halfplanes}
P_\Psi(E) \ge \Psi(\be_n) \int_{\p\Omega}\chi_E\,d\cH^{n-1}
\end{align}
for any anisotropy $\Psi$ in $\R^n.$ Thus, if $\beta_o\ge0,$ then  by \eqref{min_halfplanes}
\begin{multline*}
\sC_\beta(E) \ge \gamma\,P_\Phi(E,\Omega) + (1-\gamma) \,P_\Phi(E,\Omega) + \beta_o \int_{\p\Omega} \chi_E\,d\cH^{n-1} \\
\ge \gamma\, P_\Phi(E,\Omega) + \Big(1-\gamma + \tfrac{\beta_o}{\Phi(\be_n)}\Big)\,\Phi(\be_n)\int_{\p\Omega}\chi_E\,d\cH^{n-1}
\end{multline*}
and hence, choosing $\gamma = \frac{\Phi(\be_n) + \beta_o}{2\Phi(\be_n)}$
we deduce \eqref{qanday_kungil_uzsam}.

Assume that $\beta_o<0.$ Then  $\Psi:=\frac{\Phi(\be_n) - \beta_o}{2\Phi(\be_n)}\,\Phi$ is an anisotropy. By \eqref{min_halfplanes}
$$
\int_{\Omega\cap \p^*E} \Psi(\nu_E)\,d\cH^{n-1} \ge \Psi(\be_n)\int_{\p\Omega}\chi_E\,d\cH^{n-1} = \tfrac{\Phi(\be_n) - \beta_o}{2}\,\int_{\p\Omega}\chi_E\,d\cH^{n-1}.
$$
Thus,
\begin{align*}
\sC_\beta(E) = & \int_{\Omega\cap\p^*E} \tfrac{\Phi(\be_n) + \beta_o}{2\Phi(\be_n)}\,\Phi(\nu_E)\,d\cH^{n-1} +  \int_{\Omega\cap\p^*E} \Psi(\nu_E)\,d\cH^{n-1} + \int_{\p\Omega}\beta\chi_E\,d\cH^{n-1}\\[2mm]
\ge & \tfrac{\Phi(\be_n) + \beta_o}{2\Phi(\be_n)}\,P_\Phi(E,\Omega) + \int_{\p\Omega} \tfrac{\Phi(\be_n) - \beta_o+2\beta}{2}\chi_E\,d\cH^{n-1}\\[2mm]
\ge & \tfrac{\Phi(\be_n) + \beta_o}{2\Phi(\be_n)}\,P_\Phi(E).
\end{align*}

(b) Repeat arguments of \cite[Lemma 3.5]{BKh:2018}.
\end{proof}

The following proposition provides a characterization of elliptic $C^2$-norms. 

\begin{proposition}\label{prop:elliptic_anis_propo}
For any $C^{k+\alpha}$-anisotropy $\Phi$ with $k\ge2$ and $\alpha\in[0,1],$ the following assertions are equivalent:
\begin{itemize}
\item[\rm(a)] $\Phi$ is elliptic;

\item[\rm(b)] there exists $\gamma>0$ such that 
$$
\nabla^2\Phi(x)y^T \cdot y^T \ge \frac{\gamma}{|x|}\quad 
\text{for any $x\in\R^n\setminus\{0\},$ $y\in\S^{n-1}$ with $x\cdot y = 0;$}
$$

\item[\rm(c)] there exists $\gamma>0$ such that 
$$
\nabla^2\Phi(x)y^T \cdot y^T \ge \frac{\gamma}{|x|}\quad 
\text{for any $x\in\R^n\setminus\{0\},$ $y\in\S^{n-1}$ with $\nabla \Phi(x)\cdot y = 0;$} 
$$

\item[\rm(d)] there exists $\gamma>0$ such that
$$
\nabla^2\Phi(x)y^T\cdot y^T \ge \tfrac{\gamma}{|x|} \Big|y - \Big(y\cdot \tfrac{x}{|x|}\Big)\tfrac{x}{|x|}\Big|^2\quad\text{for any $x\in\R^n\setminus\{0\},$ $y\in\R^n;$}
$$

\item[\rm(e)] for any segment $[x,y],$ lying on the line not passing through the origin, the second derivative of the function $t\mapsto h(t):=\Phi(x + t(y-x))$ is strictly positive in $[0,1]$;

\item[\rm(f)] $\Phi^o$ is $C^{k+\alpha}$ and elliptic;

\item[\rm(g)] the principal curvatures of the boundary of $W^\Phi$ is strictly positive;

\item[\rm(h)] there exists $r\in(0,1)$ such that for any $z\in\p  W^\Phi$ there exist $x_z,y_z\in\R^n$ such that 
$$
B_r(x_z)\subset W^\Phi \subset \cl{B_{1/r}(y_z)}
\qquad\text{and}\qquad 
\p B_r(x_z)\cap \p W^\Phi =\p B_{1/r}(y_z)\cap \p  W^\Phi = \{z\}.
$$
\end{itemize}
\end{proposition}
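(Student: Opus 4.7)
The plan is to route the equivalences through a hub of algebraic reformulations coming from 1-homogeneity. First I would record three basic facts: Euler's identity $\nabla\Phi(x)\cdot x=\Phi(x)$, the kernel relation $\nabla^2\Phi(x)x=0$ obtained by differentiating the 0-homogeneity of $\nabla\Phi$, and the fact that $\nabla^2\Phi$ is positively $(-1)$-homogeneous. These immediately collapse the algebraic block (a), (b), (c), (d).

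For (a)$\Leftrightarrow$(d), use $\nabla^2|x|=|x|^{-1}(I-\hat x\otimes\hat x)$ to see that convexity of $\Phi-\gamma|\cdot|$ is precisely the inequality in (d). The equivalence (d)$\Leftrightarrow$(b) follows by decomposing $y=\lambda x+y_\perp$ and using $\nabla^2\Phi(x)x=0$, so the quadratic form depends only on $y_\perp$. For (b)$\Leftrightarrow$(c), I would transit between the two hyperplanes $x^\perp$ and $\{\nabla\Phi(x)\}^\perp$ by adding a multiple of $x$ (which does not affect the value of the quadratic form); the associated norm correction is bounded uniformly via $c_\Phi|x|\le\Phi(x)$ and $|\nabla\Phi(x)|\le C_\Phi$ from \eqref{norm_bounds}, so the constants transfer up to a universal factor.

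For (d)$\Leftrightarrow$(e), differentiate $h(t)=\Phi(x+t(y-x))$ twice to obtain $h''(t)=\nabla^2\Phi(z_t)(y-x)\cdot(y-x)$ with $z_t:=x+t(y-x)$. The key geometric observation is that when the line through $x$ and $y$ misses the origin, the vector $y-x$ is never parallel to $z_t$ (otherwise the line would pass through $0$), so the projection of $y-x$ onto $z_t^\perp$ is nonzero and (d) forces $h''(t)>0$. Conversely, strict convexity along such segments together with a compactness argument on $\S^{n-1}\times\S^{n-1}$ (using the continuity of $\nabla^2\Phi$ and its $(-1)$-homogeneity) reconstructs the uniform bound $\gamma/|x|$ in (b).

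For the geometric block (f), (g), (h), I would use duality. Since $\partial W^\Phi=\{\Phi^o=1\}$, the second fundamental form of $\partial W^\Phi$ at a point $z$ is, up to a positive normalization by $|\nabla\Phi^o(z)|$, the restriction of $\nabla^2\Phi^o(z)$ to $T_z\partial W^\Phi$; this identifies (g) with property (b) applied to $\Phi^o$, hence (g)$\Leftrightarrow$(f). The identity $\Phi^{oo}=\Phi$ together with the regularity theory of Legendre transforms of uniformly convex 1-homogeneous functions — essentially the implicit function theorem applied to the dual map $\nabla\Phi:\{\Phi=1\}\to\{\Phi^o=1\}$, whose invertibility at the $C^{k-1+\alpha}$-level is exactly (b) — yields (a)$\Leftrightarrow$(f). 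The last step (g)$\Leftrightarrow$(h) is the classical Blaschke rolling-ball theorem: uniform positivity of principal curvatures on a smooth compact convex body is equivalent to a uniform two-sided sphere condition, with the radii $r$ and $1/r$ controlled by the extreme eigenvalues of the second fundamental form.

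The main obstacle is the duality step (a)$\Leftrightarrow$(f), since transferring both the $C^{k+\alpha}$-regularity and the quantitative ellipticity constant between $\Phi$ and $\Phi^o$ requires carefully inverting the Legendre gradient map and invoking Schauder-type estimates; here the uniform lower bound in (b) is precisely what makes this inversion possible and produces the self-dual formulation that (f) asserts.
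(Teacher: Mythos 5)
Your argument matches the paper's strategy: both establish the algebraic block (a)--(e) through the homogeneity identities $\nabla^2\Phi(x)x=0$, Euler's formula and $(-1)$-homogeneity of the Hessian, handle (g)$\Leftrightarrow$(h) via a uniform two-sided curvature/ball comparison (the paper states this exactly as "the second fundamental form of $\partial W^\Phi$ is bounded from below and above by that of a ball," which is the Blaschke rolling-ball fact you invoke), and obtain the dual regularity/ellipticity (f) by inverting the gradient map $\nabla\Phi\colon\partial W^\Phi\to\partial W^{\Phi^o}$ via the inverse function theorem using the tangential lower bound (b). The only cosmetic difference is routing: the paper goes (a)$\Rightarrow$(g)$\Rightarrow$(h)$\Rightarrow$(g)$\Rightarrow$(a) directly in local coordinates and (b)$\Rightarrow$(f) separately, whereas you link the geometric block through $\Phi^o$ via (g)$\Leftrightarrow$(f); one small point worth flagging is that your "(g)$\Leftrightarrow$(f)" identification silently presumes $\Phi^o\in C^2$ so that the second fundamental form of $\partial W^\Phi$ can be identified with the Hessian of $\Phi^o$, so the $C^{k+\alpha}$ upgrade in (f) still has to be supplied by the gradient-inversion step, exactly as the paper does.
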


\begin{proof}
Since $\Phi$ is $C^2$ and 
$$
\nabla^2\Phi(x)x^T = 0,\quad x\in \R^n\setminus\{0\},
$$
the ellipticity of $\Phi$ is equivalent to the strict positivity of
its Hessian $\nabla^2 \Phi(x)$ on $T_x:=\{y:\,x\cdot y=0\}.$ Thus, passing to local coordinates, one can show (a)$\Rightarrow$(g)$\Rightarrow$(h)$\Rightarrow$(g)$\Rightarrow$(a). 
Moreover, the assertions 
$$
\text{(a)$\Rightarrow$(b)$\Rightarrow$(c)$\Rightarrow$(b)$\Rightarrow$(d)$\Rightarrow$(e)$\Rightarrow$(b)$\Rightarrow$(a)}
$$
follow directly from the definition of ellipticity.

Finally, let us show (a)$\Rightarrow$(f). Since $\p W^\Phi$ does not contain segments and $$
\nabla \Phi(x)\cdot x = \Phi(x) \quad \text{and} \quad 
\Phi^o(\nabla\Phi(x)) = 1,\quad x\in\R^n,
$$
$\Phi^o$ is differentiable on $\R^n\setminus\{0\}.$ Hence, by convexity, $\Phi^o$ is $C^1.$   
The implications (g)$\Leftrightarrow$(h) follows from the fact that the second fundamental form of $\p W^\Phi$ is bounded from below and from above by that of ball. Similar arguments can be used in the proof of the implication (a)$\Rightarrow$(h) using the strict positive definiteness of $\nabla^2\Phi(x)$ on $T_x$ (in view of the convexity of $x\mapsto \Phi(x)-\gamma|x|$).

Finally, we prove (b)$\Rightarrow$(f). Since $\p W^\Phi$ has no segments, $\Phi^o$ is differentiable in $\R^n\setminus\{0\},$ and by convexity, $\nabla \Phi^o$ is continuous. Since the map $x\in\p W^\Phi\mapsto \nabla\Phi(x) \in \p W^{\Phi^o}$ is a homeomorphism. By (b) and (c),
$$
\nabla^2\Phi(x)y^T\cdot y^T \ge \frac{\gamma}{|x|}
$$
for any $x\in\p W^\Phi$ and $y\in\S^{n-1}$ with $x\cdot y=0.$ This implies $\nabla^2\Phi$ maps the tangent plane of $\p W^\Phi$ at $x$ to the tangent plane of $\p W^{\Phi^o}$ at $\nabla\Phi(x).$
Thus, by the inverse mapping theorem, the $\nabla \Phi$ is a $C^{k-1+\alpha}$-homeomorphism. In particular,  $\p W^{\Phi^o}$ is locally a $C^{k+\alpha}$-manifold, and hence, $\Phi^o$ is $C^{k+\alpha}.$ Finally, to prove the ellipticity it is enough to observe 
$$
\nabla^2\Phi^o(x)y^T\cdot y^T >0
$$
for any $x\in \p W^{\Phi^o}$ and $y\in \S^{n-1}$ with $y\cdot \nabla\Phi^o(x)=0,$ thus, assertion (c)  holds, and hence, $\Phi^o$ is also elliptic.
\end{proof}

\end{document}